\newcommand{\printvalues}{topsep=\the\topsep; itemsep=\the\itemsep; parsep=\the\parsep; partopsep=\the\partopsep}
\newcommand{\ip}{\langle\cdot,\cdot\rangle}
\newcommand{\alt}{\raise1pt\hbox{$\bigwedge$}}
\newcommand\x{\xi}
\newcommand\la{\langle}
\newcommand\ra{\rangle}
\newcommand\zz{{\mathfrak z}}
\newcommand\so{{\mathfrak {so}}}
\newcommand\ggo{{\mathfrak g}}
\newcommand\aff{\mathfrak {aff}}
\newcommand{\on}{\operatorname}
\newcommand\ee{{\mathbf e}}
\newcommand\RR{\mathbb R}
\newcommand\ad{\operatorname{ad}}
\newcommand\id{\operatorname{Id}}
\newcommand\hol{\operatorname{Hol}}
\newcommand\Hol{\operatorname{Hol}}
\theoremstyle{plain}
\newcommand{\ri}{{\rm (i)}}
\newcommand{\rii}{{\rm (ii)}}
\newcommand{\riii}{{\rm (iii)}}
\newcommand{\riv}{{\rm (iv)}}
\newtheorem{thm}{Theorem}[section]
\newtheorem{lem}[thm]{Lemma}
\newtheorem{prop}[thm]{Proposition}
\newtheorem{cor}[thm]{Corollary}
\theoremstyle{definition}
\theoremstyle{remark}
\newtheorem{rem}{Remark}[section]
\newtheorem{ex}{Example}
\title{ Invariant Conformal Killing-Yano 2-forms on five dimensional Lie Groups}
\author{A. Herrera}
\email{cecilia.herrera@unc.edu.ar}
\address{FCEFyN, Universidad Nacional de C\'ordoba, 5000 C\'ordoba, Argentina}
\author{M. Origlia}
\email{marcos.origlia@unc.edu.ar}
\address{FaMAF-CIEM, Universidad Nacional de C\'{o}rdoba, 5000 C\'{o}rdoba, Argentina}
\subjclass{}
\thanks{
The authors were partially supported by
CONICET, ANPCyT and SECyT-UNC (Argentina). The second author was also supported by the Research Foundation - Flanders (FWO Project G.0F93.17N) and ARC (Australia) DP190100317.}
\dedicatory{}
\begin{document}

\begin{abstract}
We study left invariant conformal Killing-Yano (CKY) $2$-forms on Lie groups endowed with a left invariant metric. 
We classify all $5$-dimensional metric Lie algebras carrying CKY tensors that are obtained as a one-dimensional central extension of  4-dimensional metric Lie algebras endowed with a invertible parallel skew-symmetric tensor.
On the other hand, we also classify $5$-dimensional metric Lie algebras with center of dimension greater than one admitting strict CKY tensors. In addition, we determine all possible CKY tensors on these metric Lie algebras. In particular, we exhibit the  first examples of  CKY $2$-forms on metric Lie algebras which do not admit any Sasakian structure. 

\end{abstract}

\maketitle
\section{Introduction}
A differential $p$-form $\eta$ on a $n$-dimensional Riemannian manifold $(M,g)$ is called \textit{conformal Killing-Yano} (CKY for short) if it satisfies for any vector field $X$ the following equation
\begin{equation}\label{ckyManifolds}
\nabla_X  \eta=\dfrac{1}{p+1}\iota_X\mathrm{d}\eta-\dfrac{1}{n-p+1}X^*\wedge \mathrm{d}^*\eta,
\end{equation}
where $X^*$ is the dual 1-form of $X$,  $\mathrm{d}^*$ is the co-differential operator, $\nabla$ is the Levi-Civita connection associated to $g$ and $\iota_X$ is the interior product with $X$. If $\eta$ is co-closed ($\mathrm d^*\eta=0$) then $\eta$ is said to be a Killing-Yano  $p$-form (KY for short). 
Closed conformal Killing-Yano forms will be called $*$-Killing-Yano $p$-forms. Note that the Hodge-star operator 
applied to a CKY $p$-form determines a CKY $(n-p)$-form on $M$. In particular, the Hodge-star operator interchanges closed and co-closed CKY forms on $M$.

Killing-Yano $p$-forms were defined by  K. Yano  in  1951 \cite{Yano1}, as a natural generalization of  Killing vector fields. Indeed, for $p=1$ they are dual to Killing vector fields.
Later, Kashiwada and Tachibana studied CKY forms in \cite{Tachibana}, \cite{Kashiwada} as a generalization of KY forms. Indeed, it is easy to see that a CKY $1$-form is dual to a conformal vector field.

This topic has taken a lot of interest not only in geometry but also in physics, since it has applications in the study of black holes.  A CKY form describes hidden symmetries, that is symmetries that cannot be identified directly.  In 1970, Walker and Penrose noticed in \cite{Penrose-Walker} that some  conserved quantity in the Kerr metric is originated from a Killing tensor of rank two. Since then, the study of CKY forms is in continuous growth in black holes theory.

In the literature, there are more results concerning the (non-)existence of KY forms than ``strict'' conformal Killing-Yano forms, that is, CKY forms which are not KY. Notable examples of KY $2$-forms are given by nearly K\"ahler manifolds $(M,g,J)$ whose fundamental $2$-form $\omega$ given by $\omega(X,Y)=g(JX,Y)$ is KY. For a compact simply connected symmetric space $M$,  it was shown in  \cite{BMS} that $M$ admits a non-parallel Killing-Yano $p$-form, $p \geq 2$, if and only if it is isometric to a Riemannian product $ S^ k \times N$, where $S^ k$ is a round sphere and $k > p$. Another important result states that every Killing-Yano $p$-form on a compact quaternion K\"ahler manifold is parallel for any $p \geq 2 $, see \cite{Moroianu-semmelmann}.

In the case of CKY forms, the following results were proved  in \cite{Semmelmann}: the canonical $2$-form of a Sasakian manifold is a strict CKY $2$-form; a CKY $p$-form ($p  \neq  3, 4$) on a compact manifold with holonomy $G_ 2$  is parallel; the vector space of CKY $p$-forms on a $n$-dimensional connected Riemannian manifold has dimension at most ${n+2}\choose{p+1}$; and there are no CKY forms on compact
manifolds of negative constant sectional curvature. 
In \cite{Moroianu-semmelmann-08} a description of CKY $p$-forms on a compact Riemannian product was given, proving that such a form is a sum of forms of the following types: parallel forms, pull-back of Killing-Yano forms on the factors, and their Hodge duals.



In this article we study left invariant CKY $2$-forms on Lie groups endowed with a left invariant metric. The study of KY 2-forms in this setting began in \cite{BDS} and was continued in \cite{AD20} and \cite{dBM19}. 
CKY $2$-forms (not necessary left invariant) on four dimensional simply connected Lie groups with left invariant metrics were classified in \cite{ABM},  where it is also shown that those $2$-forms which are left invariant are  parallel. Recently, in \cite{dBM21} CKY $2$- and $3$-forms were studied in $2$-step nilpotent Lie groups.

Some general properties of strict CKY $2$-forms were described in \cite{AD}. In particular \cite[Theorem $4.3$]{AD} shows  that a metric Lie algebra admitting a strict CKY tensor has odd dimension.  In dimension $3$, all CKY $2$-forms were classified in \cite{ABD}. It is not clear yet what happens in dimensions greater than three in general. It is known that the only $2$-step nilpotent Lie algebras with a strict CKY 2-form are the Heisenberg Lie algebras of dimension $2n+1$ \cite[Theorem 4.4]{ABD} (see also \cite{dBM21}), which also admit Sasakian structures.

Besides this progress, it is clear that there are not many explicit examples of strict CKY $2$-forms on Lie groups, except for the ones induced by left invariant Sasakian structures; and the next natural case is to consider $5$-dimensional metric Lie algebras.
Therefore, the first goal of this work is to fill this gap by giving examples of non-Sasakian Lie algebras that admit a CKY $2$-form. Secondly, we provide a classification of $5$-dimensional metric Lie algebras admitting strict CKY $2$-forms distinguishing between two cases according to the dimension of their center. 


This work is organized as follows: In Section 
$2$ we recall some basic definitions about left invariant CKY 2-forms on Lie groups endowed with a left invariant metric. We also review some results from \cite{AD} that will be useful in this work.
In Section $3$ we use the recent classification of $4$-dimensional metric Lie algebra carrying parallel skew-symmetric endomorphisms in \cite{Herrera}, to classify all $5$-dimensional metric Lie algebras with one dimensional center admitting a CKY $2$-form whose co-differential lies in the dual of the center (Theorem \ref{classificacion_1}).

In Section $4$ we exhibit some general results on metric Lie algebras with center of dimension greater than one carrying a CKY $2$-form. 
We continue in Section $5$ with a complete classification, up to isometric isomorphism and scaling, of $5$-dimensional metric Lie algebras with center of dimension greater than one admitting a CKY $2$-form (Theorem \ref{metricLA}). In particular, we obtain the first explicit examples of CKY tensors on metric Lie algebras which do not admit any Sasakian structure.

In Section $6$ we study the vector space of left invariant CKY $2$-forms on each family of metric Lie algebras obtained in Section $3$ and Section $5$. 
In the former case we show that the CKY $2$-forms are always closed (Proposition \ref{closed}), in contrast with the latter case when they are never closed (Proposition \ref{never_closed}). Moreover, in Theorem \ref{CKYspace_extensiones} and Theorem \ref{CKYspace},  full descriptions of the vector spaces of left invariant CKY $2$-forms (including KY $2$-forms) are given, proving that they are one dimensional. 


\

\begin{small}
	\noindent{\itshape Acknowledgement.}
	Part of this work has been set up during the visit of the first-named author at KU Leuven, she thanks the Mathematics department at Campus Kulak Kortrijk for their hospitality.
	The authors would like to thank Adri\'an Andrada for several interesting discussions on the subject, and we are also grateful to the anonymous referee for carefully reading the manuscript
	and her/his suggestions that improved the exposition of the paper.
\end{small}

\section{Left invariant CKY 2-forms on Lie groups}
\subsection{Left invariant structures on Lie groups} 
Let $G$ be a Lie group and $\ggo$ its Lie algebra of left invariant vector fields. It is known that there is a linear isomorphism between $\ggo$ and $T_eG$ where $e$ is the identity in $G$. 
We denote by $g$ a left invariant Riemannian metric on $G$ satisfying ${L_a}^*g=g$, where $L_a$ denotes the left translation by $a\in G$. There is a well-known correspondence between left invariant metrics on $G$ and inner products on $\ggo \cong T_eG$ defined as $\la \cdot,\cdot\ra:=g_e(\cdot,\cdot)$. 

Every left invariant metric $g$  defines a unique Levi-Civita connection $\nabla$ on $G$, and  for every $x,y,z\in \ggo$ it has the following simple expression
\begin{equation}\label{koszul}
2\la \nabla_xy,z\ra=\la [x,y],z\ra-\la [y,z],x\ra+\la [z,x],y\ra.
\end{equation}
In particular, it is easy to see that for any $x\in\ggo$, the endomorphism $\nabla_x:\ggo\to \ggo$ is skew-symmetric with respect to $\la \cdot, \cdot \ra$.

A $2$-form $\omega$ is left invariant if $L_a^* \omega=\omega$ for all $a\in G$, and every left invariant $2$-form $\omega$ determines a skew-symmetric bilinear form on $\ggo$, which we also denote by $\omega$. 
Reciprocally, every skew-symmetric bilinear form on $\ggo$ defines a left invariant 2-form on $G$. 

There is a left invariant skew-symmetric $(1,1)$ tensor $T:TG\to TG$ associated to each left invariant $2$-form $\omega$, such that, $\omega(X,Y)=g(TX,Y)$ for all $X,Y$ vector fields on $G$. In particular, this induces a skew-symmetric endomorphism on $\ggo$, which we still denote by $T$. Reciprocally, every skew-symmetric endomorphism $T$ on $\ggo$ defines a skew-symmetric bilinear form $\omega$ on $\ggo$, and as a consequence induces a left invariant 2-form on $G$.
We will study left invariant $2$-forms on $G$ satisfying condition \eqref{ckyManifolds}, and the associated endomorphism $T$ will be called a CKY tensor.  We will make no distinction between the CKY $2$-form $\omega$ and the associated CKY tensor $T$.

\subsection{CKY tensors on Lie algebras} 
In \cite{Semmelmann}, Semmelmann gives an equivalent definition for CKY $2$-forms on Riemannian manifolds $(M,g)$, that is,  $\omega$ is a CKY $2$-form on $(M,g)$ if and only if there exists a $1$-form $\theta$  such that 
\begin{equation}\label{cky_equation_w}
(\nabla_X \omega)(Y,Z)+(\nabla_Y \omega)(X,Z)=2g( X,Y) \theta(Z)-g( Y,Z) \theta(X)-g( X,Z)\theta(Y)
\end{equation} 
for all $X,Y,Z$ vector fields on $M$. If such a $1$-form exists, then it is uniquely determined by $\omega$ and it is given by $$\theta(X)=-\displaystyle{\frac{1}{n-1}\mathrm{d} ^*\omega(X)
},$$
where $n$ denotes the dimension of $M$.
We consider now a Lie group $G$ equipped with a left invariant metric $g$, and let $\omega$ be a left invariant CKY $2$-form on $(G,g)$ with associated CKY tensor $T$. Since $\omega$ is left invariant, then $\theta$ is  left invariant as well. Moreover, equation \eqref{cky_equation_w} is equivalent to the following expression at the Lie algebra level
\begin{equation}
	\label{ckyequationliealgebra}
	\la \left(\nabla_x T\right)y,z\ra+\la \left(\nabla_yT\right)x,z\ra=2\la x,y\ra \theta(z)-\la y,z\ra \theta(x)-\la x,z\ra \theta(y),
\end{equation} 
for some $\theta\in\ggo^*$, and we will refer to \eqref{ckyequationliealgebra} as the CKY condition along this work. Note that \eqref{ckyequationliealgebra} is symmetric with respect to $x$ and $y$.
We denote by $\xi$ the unique element of $\ggo $ such that 
	\[\theta(x)=\la \xi,x\ra \]
for all $x\in \ggo$, and we will refer to $\xi$ as the vector associated to $T$.
We are interested in strict CKY tensors, that is, $\theta\neq 0$. 
We recall now some general results of CKY tensors from \cite{AD} that will be useful later.

\begin{prop}\label{xi_unique} If $T$ is a CKY  tensor on a metric Lie algebra $(\ggo, \la \cdot , \cdot \ra)$ with associated vector $\xi$, then $T\xi=0$.
\end{prop}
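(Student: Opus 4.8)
The plan is to test the CKY condition \eqref{ckyequationliealgebra} on the diagonal $y=x$, pair the resulting vector identity with $Tx$, and then invoke the diagonalizability of $T^{2}$. Setting $y=x$ in \eqref{ckyequationliealgebra} and recalling $\theta(\cdot)=\langle\xi,\cdot\rangle$, the two summands on the left coincide, so the equation collapses to $\langle(\nabla_x T)x,z\rangle=|x|^{2}\langle\xi,z\rangle-\langle x,z\rangle\langle\xi,x\rangle$ for all $x,z\in\ggo$, equivalently $(\nabla_x T)x=|x|^{2}\xi-\langle\xi,x\rangle x$ in $\ggo$. Substituting $z=Tx$ and using that $T$ is skew-symmetric (so that $\langle x,Tx\rangle=0$ and $\langle\xi,Tx\rangle=-\langle T\xi,x\rangle$) turns this into
\[
\langle(\nabla_x T)x,Tx\rangle=-|x|^{2}\langle T\xi,x\rangle .
\]

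Next I would recompute the same quantity from $(\nabla_x T)x=\nabla_x(Tx)-T(\nabla_x x)$. Since $T$ and the inner product are left invariant, $\langle Tx,Tx\rangle$ is a constant function on $G$, so metric compatibility of $\nabla$ forces $\langle\nabla_x(Tx),Tx\rangle=\tfrac12\,x\langle Tx,Tx\rangle=0$; and skew-symmetry of $T$ gives $\langle T(\nabla_x x),Tx\rangle=-\langle\nabla_x x,T^{2}x\rangle$. Hence $\langle(\nabla_x T)x,Tx\rangle=\langle\nabla_x x,T^{2}x\rangle$, and comparing with the previous display we obtain
\[
\langle\nabla_x x,\,T^{2}x\rangle=-|x|^{2}\langle T\xi,x\rangle\qquad\text{for all }x\in\ggo .
\]

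To finish, note that $T^{2}$ is a symmetric endomorphism of $\ggo$, hence orthogonally diagonalizable: pick an orthonormal eigenbasis $e_{1},\dots,e_{n}$ with $T^{2}e_{i}=\mu_{i}e_{i}$. Evaluating the last identity at $x=e_{i}$, the left-hand side becomes $\mu_{i}\langle\nabla_{e_{i}}e_{i},e_{i}\rangle=\tfrac{\mu_{i}}{2}\,e_{i}\langle e_{i},e_{i}\rangle=0$, once again by left invariance of the metric. Therefore $\langle T\xi,e_{i}\rangle=0$ for every $i$, and so $T\xi=0$.

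The proof is short and presents no real obstacle; the only steps calling for a little foresight are the choice to test the diagonal CKY relation against $Tx$ rather than an arbitrary $z$, and the observation that after reaching $\langle\nabla_x x,T^{2}x\rangle=-|x|^{2}\langle T\xi,x\rangle$ one should not attempt to show the left-hand side vanishes for every $x$ directly — that claim is equivalent to the conclusion — but rather restrict to the eigenlines of $T^{2}$, on which it is automatically zero. A polarization argument would replace this last step if one wished to avoid the spectral theorem, and it is worth noting that the Koszul formula is never used.
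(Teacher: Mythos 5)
Your argument is correct. The paper itself does not prove Proposition \ref{xi_unique} --- it is recalled without proof from the reference [AD] --- so there is no in-text argument to compare against; judged on its own, your proof is complete and self-contained. Each step checks out: the diagonal of \eqref{ckyequationliealgebra} gives $(\nabla_x T)x=\|x\|^2\xi-\langle\xi,x\rangle x$; pairing with $Tx$ and using skew-symmetry of $T$ yields $\langle(\nabla_xT)x,Tx\rangle=-\|x\|^2\langle T\xi,x\rangle$; the Leibniz expansion together with skew-symmetry of $\nabla_x$ (equivalently, metric compatibility applied to the constant function $\langle Tx,Tx\rangle$) identifies the left side with $\langle\nabla_xx,T^2x\rangle$; and evaluating on an orthonormal eigenbasis of the symmetric operator $T^2$ kills the left side, forcing $\langle T\xi,e_i\rangle=0$ for all $i$. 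The only stylistic remark is that the spectral-theorem step is heavier than needed: once you have $\langle\nabla_xx,T^2x\rangle=-\|x\|^2\langle T\xi,x\rangle$, you could equally well sum the identity $\langle(\nabla_{e_i}T)e_i,z\rangle=\langle e_i,e_i\rangle\theta(z)-\langle e_i,z\rangle\theta(e_i)$ over an arbitrary orthonormal basis and exploit the trace-free character of the resulting expression, which is closer in spirit to how such identities are usually extracted in [AD] and [Semmelmann]; but your route is perfectly valid and arguably more direct.
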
 

\begin{thm}
	\label{ad1}
	Let $T$ be a CKY tensor on the metric Lie algebra $(\ggo, \la \cdot , \cdot \ra)$, with associated vector $\xi$. If $\xi\neq 0$, then $\dim \ggo$ is odd and $T | _{\xi^ {\perp}}  : {\xi}^ {\perp} \to {\xi}^ {\perp}$ is a linear isomorphism. Moreover,
	$(\nabla_{\xi} T )\xi = \nabla_{\xi} \xi=0$, and ${\xi}^ {\perp}$ is stable by the operator $\ad_\xi$.
\end{thm}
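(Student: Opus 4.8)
The plan is to distill a single pointwise identity out of the CKY condition \eqref{ckyequationliealgebra} and then extract everything from it. Setting $y=x$ in \eqref{ckyequationliealgebra} and writing $\theta(\cdot)=\langle\xi,\cdot\rangle$ gives, for every $x\in\ggo$,
\[(\nabla_x T)x \;=\; |x|^2\,\xi-\langle\xi,x\rangle\,x,\]
since the right-hand side pairs correctly against every $z\in\ggo$. Taking $x=\xi$ and using $T\xi=0$ (Proposition \ref{xi_unique}) yields $(\nabla_\xi T)\xi=0$ at once; and since $(\nabla_\xi T)\xi=\nabla_\xi(T\xi)-T(\nabla_\xi\xi)=-T(\nabla_\xi\xi)$, we also get $\nabla_\xi\xi\in\Ker T$.

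The heart of the argument is to show $\Ker T=\RR\xi$. Let $v\in\Ker T$. Plugging $x=v$ into the displayed identity gives $-T(\nabla_v v)=|v|^2\xi-\langle\xi,v\rangle v$; applying $T$ and using $Tv=T\xi=0$ gives $T^2(\nabla_v v)=0$, hence $T(\nabla_v v)=0$, because a skew-symmetric operator on a Euclidean space satisfies $\Ker T^2=\Ker T$ (from $\langle T^2u,u\rangle=-|Tu|^2$). So the right-hand side vanishes: $|v|^2\xi=\langle\xi,v\rangle v$, and since $\xi\neq0$ this forces $v\in\RR\xi$. Conversely $T\xi=0$, so $\Ker T=\RR\xi$.

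All the remaining assertions now drop out. Since $T$ is skew-symmetric, $\langle Tw,\xi\rangle=-\langle w,T\xi\rangle=0$, so $T$ preserves $\xi^\perp$; and $T|_{\xi^\perp}$ is injective because $\xi^\perp\cap\Ker T=0$, hence it is an isomorphism of the finite-dimensional space $\xi^\perp$. Consequently $\operatorname{rank}T=\dim\ggo-1$ is even (the nonzero eigenvalues of a skew-symmetric endomorphism come in conjugate pairs), so $\dim\ggo$ is odd. Next, $\nabla_\xi\xi\in\Ker T=\RR\xi$ by the first paragraph, while $\nabla_\xi\xi\perp\xi$ by skew-symmetry of $\nabla_\xi$, so $\nabla_\xi\xi=0$; in particular $(\nabla_\xi T)\xi=-T(\nabla_\xi\xi)=0$ once more. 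Finally, for $w\in\xi^\perp$ the Koszul formula \eqref{koszul} gives $\langle[\xi,w],\xi\rangle=\langle\nabla_\xi w,\xi\rangle=-\langle w,\nabla_\xi\xi\rangle=0$, i.e. $\ad_\xi$ leaves $\xi^\perp$ invariant.

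The only genuinely clever point is the one in the second paragraph: applying $T$ to the identity $-T(\nabla_v v)=|v|^2\xi-\langle\xi,v\rangle v$ and invoking $\Ker T^2=\Ker T$ to kill the derivative term and so force the right-hand side to be zero. Everything else is routine bookkeeping with the Koszul formula and with the skew-symmetry of $T$ and of each $\nabla_x$.
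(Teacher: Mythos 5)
Your argument is correct. Note that the paper itself does not prove Theorem \ref{ad1}; it is recalled verbatim from \cite{AD}, so there is no in-paper proof to compare against. Every step of your proposal checks out: the polarized identity $(\nabla_x T)x=|x|^2\xi-\langle\xi,x\rangle x$ follows from \eqref{ckyequationliealgebra} with $y=x$, the reduction $\Ker T^2=\Ker T$ for skew-symmetric $T$ is valid, and the deductions of $\Ker T=\RR\xi$, the parity of $\dim\ggo$, $\nabla_\xi\xi=0$, and the $\ad_\xi$-invariance of $\xi^\perp$ via the Koszul formula are all sound. One small remark: the central step can be shortened. Since $T$ is skew-symmetric with $T\xi=0$, one has $\operatorname{Im}T\subseteq\xi^\perp$ from the outset; so for $v\in\Ker T$ with $v\perp\xi$ (the only case needed to conclude $\Ker T\cap\xi^\perp=0$), the identity reads $|v|^2\xi=-T(\nabla_v v)\in\xi^\perp$, and pairing with $\xi$ gives $|v|^2\|\xi\|^2=0$ directly, without invoking $\Ker T^2=\Ker T$. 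Your more general treatment of an arbitrary $v\in\Ker T$ is what forces the extra $\langle\xi,v\rangle v$ term and hence the $T^2$ trick; both routes are legitimate.
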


\begin{rem}\label{rescaling}
If $T$ is a CKY tensor on a given metric Lie algebra $(\ggo, \ip)$, and $\xi$ is the unique element such that $\theta (x)=\la \xi,x\ra$ for all $x\in \ggo$ as described above, then $\theta (\xi)=\| \xi \|^2$. 
If we consider $cT$ with $c\in \mathbb{R}^*$, then $cT$ is a CKY tensor as well, since it is easily verified that the space of CKY tensors is a vector space. 
Then, the $1$-form associated to $cT$ is given by $\widehat{\theta}=c\theta$. Moreover, the unique
element $\widehat{\xi}$ such that $\widehat{\theta}(x)=\la \widehat{\xi},x\ra$ is $\widehat{\xi}=c\xi$.
Therefore, without loss of generality, we may assume that $\xi$ is unitary, since we can replace $T$ by $\frac{1}{\| \xi \|}T$, if necessary. From now on, we will assume  $\| \xi \|=1$, in particular $\theta(\xi)=1$. 
\end{rem}

In \cite{AD} it was proven that if $\ggo$ is unimodular, then $\xi\in\ggo'$. Here, we consider the general case when $\ggo$ is not necessarily unimodular and we obtain the following result:

\begin{lem}\label{xiperpgg} 
	If $T$ is a strict CKY  tensor on a metric Lie algebra $(\ggo, \la \cdot , \cdot \ra)$ with associated vector $\xi$, then 	$\la \xi, \ggo'\ra\neq 0$. 
\end{lem}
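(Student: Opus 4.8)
The plan is to argue by contradiction. Suppose $\langle\xi,\ggo'\rangle = 0$, i.e. $\langle[x,y],\xi\rangle = 0$ for all $x,y\in\ggo$. By Remark \ref{rescaling} we may assume $\|\xi\| = 1$, so $\theta(\xi) = 1$. The goal is to deduce from the CKY condition \eqref{ckyequationliealgebra} an operator identity on $\ggo$ whose trace is absurd.

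The first step is to compute $\nabla_x\xi$ from the Koszul formula \eqref{koszul}. Under the standing assumption the term $\langle[z,x],\xi\rangle$ vanishes, leaving $2\langle\nabla_x\xi,z\rangle = -\langle(\ad_\xi + \ad_\xi^*)x,z\rangle$ for all $z$, where $\ad_\xi^*$ is the adjoint of $\ad_\xi$ with respect to $\langle\cdot,\cdot\rangle$. Hence $\nabla_x\xi = -Sx$ for all $x$, with $S := \tfrac12(\ad_\xi + \ad_\xi^*)$ the symmetric part of $\ad_\xi$.

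The second step is to evaluate \eqref{ckyequationliealgebra} at $z = \xi$. Since $\theta(\xi)=1$ and $\theta(\cdot)=\langle\xi,\cdot\rangle$, the right-hand side becomes $2\langle x,y\rangle - 2\langle\xi,x\rangle\langle\xi,y\rangle$. For the left-hand side, one checks directly (using that $T$ and every $\nabla_x$ are skew-symmetric) that each $\nabla_xT$ is a skew-symmetric endomorphism of $\ggo$, so $\langle(\nabla_xT)y,\xi\rangle = -\langle(\nabla_xT)\xi,y\rangle$; moreover $(\nabla_xT)\xi = \nabla_x(T\xi) - T(\nabla_x\xi) = TSx$, using $T\xi = 0$ from Proposition \ref{xi_unique} together with the first step. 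Substituting, the left-hand side becomes $-\langle TSx,y\rangle - \langle TSy,x\rangle$, which --- since $(TS)^* = S^*T^* = -ST$, as $S$ is symmetric and $T$ skew-symmetric --- equals $\langle(ST - TS)x,y\rangle$. Therefore
\[
\langle (ST - TS)x,\, y\rangle \;=\; 2\langle x,y\rangle - 2\langle\xi,x\rangle\langle\xi,y\rangle \qquad \text{for all } x,y\in\ggo .
\]
Taking the trace (set $x = y = e_i$ for an orthonormal basis $\{e_i\}$ of $\ggo$ and sum), the left-hand side yields $\tr(ST - TS) = 0$ while the right-hand side yields $2\dim\ggo - 2\|\xi\|^2 = 2\dim\ggo - 2$; hence $\dim\ggo = 1$. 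This is absurd: a one-dimensional Lie algebra carries only the trivial skew-symmetric endomorphism and hence no strict CKY tensor (in fact, by Theorem \ref{ad1}, such a $\ggo$ has odd dimension). This contradiction establishes $\langle\xi,\ggo'\rangle\neq 0$.

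The computation is elementary throughout; the points requiring a little care are the adjoint bookkeeping (that $\nabla_xT$ is skew-symmetric and that $(TS)^* = -ST$) and --- this is the real crux --- recognizing that evaluating \eqref{ckyequationliealgebra} at $z = \xi$, combined with $T\xi = 0$, collapses the a priori second-order quantity $(\nabla_xT)\xi$ into the purely algebraic operator $TS$. Beyond selecting that specialization, I do not anticipate a genuine obstacle.
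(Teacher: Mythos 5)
Your proof is correct, but it takes a genuinely different route from the paper's. The paper argues directly and positively: using the formula $\theta=-\tfrac{1}{n-1}\mathrm{d}^*\omega$ and the Koszul formula, it computes
$\theta(\xi)=-\tfrac{1}{2(n-1)}\sum_i\la [Te_i,e_i],\xi\ra$ for an orthonormal basis $\{e_i\}$, so $\theta(\xi)=\|\xi\|^2\neq 0$ forces $\la\xi,\ggo'\ra\neq 0$ in one line. You instead argue by contradiction: the hypothesis $\xi\perp\ggo'$ is used to collapse the Koszul formula to $\nabla_x\xi=-Sx$ with $S=\tfrac12(\ad_\xi+\ad_\xi^*)$, and then specializing \eqref{ckyequationliealgebra} at $z=\xi$ yields the operator identity $ST-TS=2(\id-\xi\otimes\xi^*)$, whose trace gives $2\dim\ggo-2=0$. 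Both arguments are at bottom traces of the CKY data against $\xi$, but the paper's version avoids the contradiction scaffolding, does not need the simplified expression for $\nabla_x\xi$, and produces an explicit formula for $\theta(\xi)$ rather than just non-vanishing; your version avoids invoking the codifferential formula for $\theta$ and instead extracts everything from \eqref{ckyequationliealgebra} and \eqref{koszul}. All your intermediate steps check out: $\nabla_xT$ is indeed skew-symmetric as a commutator of skew-symmetric operators, $(\nabla_xT)\xi=TSx$ follows from $T\xi=0$ (Proposition \ref{xi_unique}), and $(TS)^*=-ST$. One cosmetic point at the endgame: citing Theorem \ref{ad1} for ``odd dimension'' does not by itself exclude $\dim\ggo=1$ (one is odd); the real reason $n=1$ is absurd is the one you state first, namely that $T$ would then vanish, forcing $\omega=0$ and hence $\theta=0$ (indeed the whole CKY $2$-form framework, including the normalization $\theta=-\tfrac{1}{n-1}\mathrm{d}^*\omega$, presupposes $n\geq 2$), contradicting strictness.
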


\begin{proof}
	Let $\{e_i\}$ be an orthonormal basis of $\ggo$, then
	\begin{align*}
	\theta(\xi)&=-\displaystyle{\frac{1}{n-1}\mathrm{d}^*\omega(\xi)
	} 	=\displaystyle{\frac{1}{n-1}\sum_{i=1}^n(\nabla_{e_i} \omega) (e_i,\xi)} 
	=-\displaystyle{\frac{1}{n-1}\sum_{i=1}^n\omega(\nabla_{e_i}e_i,\xi)+\omega(e_i,\nabla_{e_i}\xi)} \\
	&=-\displaystyle{\frac{1}{n-1}\sum_{i=1}^n \la T\nabla_{e_i}e_i,\xi\ra+\la Te_i,\nabla_{e_i}\xi\ra}
	=-\displaystyle{\frac{1}{2(n-1)}\sum_{i=1}^n \la [Te_i,e_i],\xi\ra}.
	\end{align*}
	Since, $\theta(\xi)\neq0$, then $\la \xi, \ggo'\ra \neq 0$.
\end{proof}

As said in the introduction,
in \cite[Theorem $4.3$]{AD}, Andrada and Dotti proved  that a metric Lie algebra admitting a strict CKY tensor has odd dimension. Therefore, we look for CKY $2$-forms on odd dimensional metric Lie algebras. 

The first step was taken in \cite{ABD}, where all CKY $2$-forms in dimension $3$ were classified: the Lie algebras $\mathfrak{su}(2)$, $\mathfrak{sl}(2,\RR)$, $\mathfrak{h}_3$ and $\aff(\RR)\times \RR$ admit strict CKY 2-forms for certain metrics. We follow the notation in \cite{Mi} and in \cite{ABDO} where  $\mathfrak{su}(2)$ is the Lie algebra of the Lie group $SU(2)$, $\mathfrak{sl}(2,\RR)$ is the Lie algebra of the Lie group $SL(2,\RR)$, $\mathfrak{h}_3$ denotes the $3$-dimensional Heisenberg Lie algebra and $\mathfrak{aff}(\mathbb{R})$ denotes the 2-dimensional non-abelian Lie algebra.


Therefore, the next natural step is to consider $5$-dimensional metric Lie algebras; we analyze them taking into account the dimension of their center. 
In Section $3$ we study the case with $1$-dimensional center, while in Section $5$ we focus on the Lie algebras with center of dimension greater that one.

\section{CKY tensors on $5$-dimensional Lie algebras with $\dim\zz=1$}

In this section we classify all $5$-dimensional metric Lie algebras 
admitting a strict CKY $2$-form whose co-differential lies in the dual of the center, in particular the center
is $1$-dimensional according to \cite[Theorem 4.6]{AD}.
In addition, we determine all possible strict CKY tensors on these metric Lie algebras (see Table \ref{tabla1}).

First, let us recall a result from \cite{AD}, where the authors characterize 
$n$-dimensional metric Lie algebras admitting  strict CKY tensors with associated vector $\xi$ in the center, as a central extension of  $(n-1)$-dimensional Lie algebras equipped with an invertible KY tensor. 
For the sake of completeness we include this result, see \cite[Theorem 4.6 and Theorem 4,8]{AD}.

\begin{thm}
	\label{teorema dotti-andrada}
	Let $S$ be an invertible KY tensor on the metric Lie algebra $(\mathfrak{h},[\cdot,\cdot],\langle \cdot,\cdot \rangle)$ such that the  $2$-form $\mu(x,y)=-2\la S^{-1}x,y\ra$ is closed. Set $\mathfrak{g}:=\mathfrak{h}\oplus_{\mu} \mathbb{R} \xi $ the central extension of $\mathfrak{h}$ by the $2$-form $\mu$, that is the vector space $\mathfrak{h}\oplus \mathbb{R} \xi $ equipped with the Lie bracket $[\cdot,\cdot]_{\mu}$ defined by
	\begin{equation}\label{extensioncon mu}
		[\mathfrak{h},\xi]_{\mu}=0, \ \ [x,y]_{\mu}= [x,y]+\mu(x,y)\xi,   \  x,y \in \mathfrak{h};
	\end{equation}
	the inner product on $\ggo$ is defined by extending the one on $\mathfrak{h}$ by $\langle \mathfrak{h},\xi\rangle =0$, with $\|\xi\|>0$ arbitrary. Then, the endomorphism $T$ of $\ggo$ given by  $T|_{\mathfrak{h}}=S$ and  $T\xi=0$ is a strict CKY tensor on $\mathfrak{g}$.
	
	Conversely, any metric Lie algebra  $(\ggo,\ip)$  admitting a strict CKY tensor $T$ with associated vector $\xi$ in the center is obtained  in this way, where $\mathfrak{h}=\xi^\perp$, $S:=T|_\mathfrak{h}$, and the Lie bracket on $\mathfrak{h}$ is the $\mathfrak{h}$-component of the Lie bracket on $\ggo$.
	Moreover, the center of $\ggo$ is generated by $\xi$. 
\end{thm}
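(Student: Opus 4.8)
Both implications will be handled at the Lie-algebra level, using Koszul's formula \eqref{koszul} and the CKY condition \eqref{ckyequationliealgebra}; by Remark \ref{rescaling} one may assume $\|\xi\|=1$. For the direct implication the plan is first to compute the Levi--Civita connection $\nabla$ of $\ggo=\mathfrak{h}\oplus_\mu\RR\xi$ from \eqref{koszul}, using that $\xi$ is central, $\langle\mathfrak{h},\xi\rangle=0$ and that the $\xi$--component of $[x,y]_\mu$ is $\mu(x,y)$ for $x,y\in\mathfrak{h}$; writing $\nabla^{\mathfrak{h}}$ for the Levi--Civita connection of $(\mathfrak{h},[\cdot,\cdot],\langle\cdot,\cdot\rangle)$ one gets
\[
\nabla_x y=\nabla^{\mathfrak{h}}_x y+\tfrac12\mu(x,y)\,\xi,\qquad \nabla_x\xi=\nabla_\xi x=S^{-1}x,\qquad \nabla_\xi\xi=0,\qquad x,y\in\mathfrak{h},
\]
where the hypothesis that $\mu$ is closed is exactly what makes $[\cdot,\cdot]_\mu$ satisfy the Jacobi identity, so that $\ggo$ really is a Lie algebra. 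Then I would check \eqref{ckyequationliealgebra} for $T$ with the candidate $\theta=\langle\xi,\cdot\rangle$, splitting into the cases (i) $x,y,z\in\mathfrak{h}$, (ii) exactly one of them equal to $\xi$, (iii) two of them equal to $\xi$: in case (i) the $\xi$--components drop out and \eqref{ckyequationliealgebra} reduces to the Killing--Yano identity $\langle(\nabla^{\mathfrak{h}}_xS)y,z\rangle+\langle(\nabla^{\mathfrak{h}}_yS)x,z\rangle=0$, which holds by hypothesis; in cases (ii)--(iii) the inputs are $T\xi=0$, the formulas above, and the elementary identity $\mu(x,Sy)=-2\langle S^{-1}x,Sy\rangle=2\langle x,y\rangle$ (skew-symmetry of $S^{-1}$). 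Since $\theta(\xi)=1\neq0$, $T$ is a strict CKY tensor whose associated vector is $\xi$, by uniqueness of the associated $1$--form.

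For the converse, let $T$ be a strict CKY tensor on $(\ggo,\langle\cdot,\cdot\rangle)$ with associated vector $\xi$ in the center, $\|\xi\|=1$. By Proposition \ref{xi_unique}, $T\xi=0$; hence $T$ is skew-symmetric and preserves $\mathfrak{h}:=\xi^\perp$, and $S:=T|_{\mathfrak{h}}$ is invertible by Theorem \ref{ad1}. For $x,y\in\mathfrak{h}$ write $[x,y]_\ggo=[x,y]_{\mathfrak{h}}+\beta(x,y)\,\xi$, where $[x,y]_{\mathfrak{h}}$ is the $\mathfrak{h}$--component of the bracket and $\beta(x,y)=\langle[x,y]_\ggo,\xi\rangle$, and let $B$ be the skew-symmetric operator of $\mathfrak{h}$ with $\langle Bx,y\rangle=\beta(x,y)$. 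Koszul's formula and the centrality of $\xi$ give $\nabla_x\xi=\nabla_\xi x=-\tfrac12Bx$ and $\nabla_x y=\nabla^{\mathfrak{h}}_x y+\tfrac12\beta(x,y)\xi$ for $x,y\in\mathfrak{h}$, with $\nabla_\xi\xi=0$. The decisive step is to identify $\beta$: evaluating \eqref{ckyequationliealgebra} on the triples $(\xi,y,z)$ and $(x,y,\xi)$ with $x,y,z\in\mathfrak{h}$, and using $(\nabla_\xi T)y=\tfrac12(SB-BS)y$, $(\nabla_yT)\xi=\tfrac12 SBy$, $(\nabla_xT)y=(\nabla^{\mathfrak{h}}_xS)y+\tfrac12\beta(x,Sy)\xi$ together with $\theta|_{\mathfrak{h}}=0$, $\theta(\xi)=1$, one arrives at the two operator identities
\[
2SB-BS=-2\,\id_{\mathfrak{h}},\qquad SB+BS=-4\,\id_{\mathfrak{h}},
\]
whose unique solution is $B=-2S^{-1}$; that is, $\beta=\mu$ with $\mu(x,y)=-2\langle S^{-1}x,y\rangle$.

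With $\beta=\mu$ in hand, $[x,y]_\ggo=[x,y]_{\mathfrak{h}}+\mu(x,y)\xi$ and $[\mathfrak{h},\xi]_\ggo=0$, so the Jacobi identity of $\ggo$ forces simultaneously that $[\cdot,\cdot]_{\mathfrak{h}}$ is a Lie bracket on $\mathfrak{h}$ and that $\mu$ is closed; thus $\ggo=\mathfrak{h}\oplus_\mu\RR\xi$ as claimed. Evaluating \eqref{ckyequationliealgebra} on $\mathfrak{h}^{3}$, where $\theta$ vanishes, gives $\langle(\nabla^{\mathfrak{h}}_xS)y,z\rangle+\langle(\nabla^{\mathfrak{h}}_yS)x,z\rangle=0$, so $S$ is a Killing--Yano tensor on $\mathfrak{h}$, invertible as noted. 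Finally, if $w+t\xi$ lies in the center with $w\in\mathfrak{h}$, then $[w,y]_\ggo=0$ for all $y\in\mathfrak{h}$ yields $\mu(w,y)=0$ for all $y$, hence $S^{-1}w=0$ and $w=0$; therefore the center of $\ggo$ is $\RR\xi$.

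The main obstacle is the converse, and specifically the passage to the two operator identities for $B$: one must extract from the single relation \eqref{ckyequationliealgebra} exactly the fact that the $\xi$--component of the bracket on $\xi^\perp$ equals $-2\langle S^{-1}\cdot,\cdot\rangle$. Once that is established, the Jacobi/cocycle bookkeeping, the Killing--Yano property of $S$ and the computation of the center are all routine; and the case of arbitrary $\|\xi\|>0$ reduces to the normalized one by rescaling $T$ as in Remark \ref{rescaling}.
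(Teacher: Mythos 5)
Your argument is correct. Note that the paper itself gives no proof of this statement: it is imported verbatim from Andrada--Dotti \cite{AD} (their Theorems 4.6 and 4.8) ``for the sake of completeness,'' so there is nothing in the text to compare against; your direct verification via Koszul's formula, and in particular the extraction of the two operator identities $2SB-BS=-2\,\id$ and $SB+BS=-4\,\id$ (which combine to $SB=-2\,\id$, hence $B=-2S^{-1}$), is the natural route and checks out. One small imprecision: in the forward direction you invoke Remark \ref{rescaling} to assume $\|\xi\|=1$, but there $\|\xi\|$ is part of the metric data of the extension rather than a normalization of $T$, so the remark does not literally apply; this is harmless, since carrying $\|\xi\|^2=t$ through the same computation gives $\nabla_x\xi=t\,S^{-1}x$ and again yields the CKY condition with $\theta=\langle\xi,\cdot\rangle$ (so the associated vector is $\xi$ itself for every $t>0$).
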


Second, every KY tensor on a $4$-dimensional metric Lie algebra is parallel according to \cite[Lemma 3.7]{ABM}. Notice that any parallel skew-symmetric endomorphism is KY, so in particular if it is invertible it satisfies the hypothesis of Theorem \ref{teorema dotti-andrada} (see also \cite[Corollary 2.2]{AD}). 
More recently, in \cite[Section 3]{Herrera} a classification of all non-abelian $4$-dimensional metric Lie algebras $(\mathfrak h,\la \cdot, \cdot \ra)$ that carry parallel skew-symmetric endomorphisms was made. 
This classification is up to isometric isomorphisms and for each fixed metric Lie algebra all parallel skew-symmetric tensors are given up to equivalence, where,
two parallel skew-symmetric tensors $H_1$ and $H_2$ are said to be \textit{equivalent} if there exists an isometric isomorphism of Lie algebras 
\begin{equation}\label{equiv}
	\varphi: \ggo\to\ggo \; \text{such that} \; \varphi H_1=H_2 \varphi.	
\end{equation}

These results altogether allow us to give a full classification of $5$-dimensional metric Lie algebras admitting a strict CKY $2$-form whose co-differential lies in the dual of the center. 
Before we state the main result of this section we give an example of this construction.

\begin{ex} \label{Re2}
	Let $\mathfrak h$ be the $4$-dimensional Lie algebra $\mathbb R\times\mathfrak e(2)$ with Lie brackets given by \linebreak $[e_1,e_2]=-f_2$ and $[e_1,f_2]=e_2$. The metric $\ip_t$ is defined by $\text{diag}(t,t,t,t)$	in the basis $\{e_1,f_1,e_2,f_2\}$, for $t>0$. 
	According to \cite{Herrera}, any invertible parallel skew-symmetric endomorphism $H$ on $(\mathfrak h,\ip_t)$ is given by  
	$Se_1=f_1$, $Se_2=a_2f_2$
	for some non-zero $a_1,a_2\in \mathbb R$. Then, it follows from Theorem \ref{teorema dotti-andrada} and the discussion above that the central extension metric Lie algebra $(\RR \times \mathfrak e(2)\oplus_{\mu}\RR \xi,\ip_t)$ with $\|\xi\|=1$, $\mu(x,y)=-2\la S^{-1}x,y\ra$,  and Lie brackets given by
	$$[e_1,e_2]_{\mu}=-f_2, \; [e_1,f_2]_{\mu}= e_2,  \;
	[e_1,f_1]_{\mu}=2t{a_1}^{-1}\xi, \;  [e_2,f_2]_{\mu}=2t{a_2}^{-1}\xi$$
	admits a CKY tensor $T$ given by $T|_\mathfrak h =S$ and $T(\xi)=0$. In particular, the associated $2$-form is given by $\omega=a_1te^1\wedge f^1+a_2te^2\wedge f^2$, where $\{e^1,f^1,e^2,f^2\}$ is the metric dual basis of $\{e_1,f_1,e_2,f_2\}$. We denote  $\RR \times \mathfrak e(2)\oplus_{\mu}\RR \xi$ by $\ggo_{a_1,a_2,t}$ to emphasize that the structure constants depend on those parameters.
\end{ex}

Next, we have the main result of this section:

\begin{thm}\label{classificacion_1}
Let $(\ggo,\ip)$ be a $5$-dimensional metric Lie algebra and denote by $\zz$ its center. 
If $(\ggo,\ip)$ admits a strict CKY tensor $T$ with associated vector $\xi\in\zz$, then $(\ggo,\ip)$ is isometrically isomorphic to one and only one of the metric Lie algebras in Table \ref{tabla1}.

Moreover, the CKY tensor $T$ is uniquely determined, up to scaling, by the corresponding  metric Lie algebra and it 
is
given in the last column of Table \ref{tabla1}.	
\end{thm}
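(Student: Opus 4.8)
The plan is to combine Theorem \ref{teorema dotti-andrada} with the classification of parallel skew-symmetric endomorphisms on $4$-dimensional metric Lie algebras from \cite{Herrera}. By Theorem \ref{teorema dotti-andrada} (in the converse direction), since $\xi \in \zz$, the metric Lie algebra $(\ggo, \ip)$ decomposes as a central extension $\hh \oplus_\mu \RR\xi$ where $\hh = \xi^\perp$ is a $4$-dimensional metric Lie algebra, $S := T|_\hh$ is an invertible KY tensor on $\hh$, and $\mu(x,y) = -2\la S^{-1}x, y\ra$ is closed. First I would invoke \cite[Lemma 3.7]{ABM} to upgrade ``$S$ is KY'' to ``$S$ is parallel'': in dimension $4$ every KY tensor is parallel, so $S$ is an invertible parallel skew-symmetric endomorphism of $(\hh, \ip)$. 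Conversely, every invertible parallel skew-symmetric endomorphism is automatically KY with $\mu$ closed (Corollary 2.2 of \cite{AD}), so the construction of Theorem \ref{teorema dotti-andrada} applies to produce a strict CKY tensor. This sets up a bijection, up to the appropriate equivalence, between the data we must classify and the list in \cite[Section 3]{Herrera}.

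\textbf{The enumeration.} The next step is to run through the classification in \cite{Herrera} of non-abelian $4$-dimensional metric Lie algebras $(\hh, \ip)$ carrying a parallel skew-symmetric endomorphism, retaining only those for which such an endomorphism $S$ can be chosen invertible (the abelian case $\hh = \RR^4$ is excluded, since then $\ggo$ would be $2$-step nilpotent and by \cite[Theorem 4.4]{ABD} only the Heisenberg algebra occurs, which has higher-dimensional center unless $\dim\hh\le 2$ — in any case it falls under the strict-CKY/$\dim\zz>1$ analysis or is ruled out; more directly, $\hh$ abelian with invertible parallel $S$ gives $\ggo$ with $\zz \supsetneq \RR\xi$, contradicting $\dim\zz=1$). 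For each admissible pair $(\hh, \ip_t)$ and each equivalence class of invertible $S$, I would form the central extension $\hh \oplus_\mu \RR\xi$ with $\|\xi\|=1$, compute the new brackets $[x,y]_\mu = [x,y] + \mu(x,y)\xi$ explicitly as in Example \ref{Re2}, and record the resulting metric Lie algebra together with $T$ (given by $T|_\hh = S$, $T\xi = 0$) and its associated $2$-form $\omega$. Rescaling the metric via Remark \ref{rescaling} is used to normalize $\|\xi\| = 1$. Each such computation is routine linear algebra paralleling Example \ref{Re2}; the output is precisely Table \ref{tabla1}.

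\textbf{Rigidity and non-redundancy.} Two points require care. First, \emph{uniqueness of $T$ up to scaling}: given $(\ggo, \ip)$ in the list, Theorem \ref{ad1} and Proposition \ref{xi_unique} force any strict CKY tensor with associated vector in the center to have the form $T|_{\xi^\perp}$ an invertible operator, $T\xi = 0$; then the equivalence relation \eqref{equiv} on parallel skew-symmetric tensors from \cite{Herrera} — together with the fact (from Theorem \ref{teorema dotti-andrada}) that $S$ determines $\mu$ hence the bracket — pins down $T$ up to the scaling freedom of Remark \ref{rescaling}, and up to the discrete parameters ($a_1, a_2$, etc.) which are absorbed into the labelling of the family. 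Second, \emph{distinctness of the families}: I would verify that no two entries of Table \ref{tabla1} are isometrically isomorphic, using invariants such as the isomorphism type of $\ggo$ (or of $\ggo/\zz \cong \hh$), the dimension of $\ggo'$, unimodularity, and the metric invariants already distinguishing the $4$-dimensional base algebras in \cite{Herrera}; the scaling parameter $t$ is eliminated by the normalization $\|\xi\| = 1$ combined with overall rescaling, leaving only genuine moduli.

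\textbf{Main obstacle.} The conceptual content is light — it is essentially a translation of \cite[Section 3]{Herrera} through the functor of Theorem \ref{teorema dotti-andrada} — so the real work is bookkeeping: correctly transcribing every $4$-dimensional case, checking in each which parallel endomorphisms are invertible, carrying out the central-extension bracket computations without error, and then proving the resulting $5$-dimensional list has no redundancies. The step I expect to be most delicate is the last one, establishing that the families in Table \ref{tabla1} are pairwise non-isometrically-isomorphic and that the stated $T$ exhausts all strict CKY tensors with $\xi \in \zz$ on each; this is where one must be most careful that the equivalence relation used in \cite{Herrera} matches the notion of ``same CKY structure'' needed here, and that no accidental isomorphisms between different base algebras are created by the extension.
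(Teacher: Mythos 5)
Your overall strategy coincides with the paper's: reduce via Theorem \ref{teorema dotti-andrada} and \cite[Lemma 3.7]{ABM} to the classification of invertible parallel skew-symmetric tensors on $4$-dimensional metric Lie algebras in \cite{Herrera}, build the central extensions as in Example \ref{Re2}, and invoke Remark \ref{rescaling} for the uniqueness of $T$ up to scaling. However, there is a concrete error in your enumeration: you exclude the abelian base $\hh=\RR^4$ on the grounds that it would force $\zz\supsetneq\RR\xi$. This is false. When $S$ is invertible the cocycle $\mu(x,y)=-2\la S^{-1}x,y\ra$ is nondegenerate on $\hh$, so for every nonzero $x\in\hh$ there is $y$ with $[x,y]_\mu=\mu(x,y)\xi\neq 0$; hence the center of $\RR^4\oplus_\mu\RR\xi$ is exactly $\RR\xi$. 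This case produces the Heisenberg algebra $\mathfrak h_5$ with the metrics $\ip_{a_1,a_2}$, which is the last row of Table \ref{tabla1} and must appear in the classification. (Your appeal to \cite[Theorem 4.4]{ABD} also misfires: the Heisenberg algebras $\mathfrak h_{2n+1}$ always have one-dimensional center.)

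A second, smaller issue is the non-redundancy step. The invariants you list (isomorphism type of $\ggo$ or of $\ggo/\zz$, $\dim\ggo'$, unimodularity, invariants of the base algebra) cannot separate members of the same family, because the parameters $a_1,a_2$ live in the cocycle $\mu$, not in the base algebra. The paper's argument is that any isometric isomorphism $\Phi$ between two such extensions satisfies $\Phi(\xi)=\pm\xi$ (as $\xi$ spans the center and is unitary), hence restricts to an isometric isomorphism of $(\hh,\ip_t)$ onto $(\hh,\ip_{t'})$ --- forcing $t=t'$ by \cite[Proposition 3.6]{Herrera} --- and moreover, by \eqref{extensioncon mu}, satisfies $\mu'(\Phi x,\Phi y)=\mu(x,y)$, i.e.\ $S'=\Phi^tS\Phi$, so $S$ and $S'$ are equivalent in the sense of \eqref{equiv} and $(a_1,a_2)=(a_1',a_2')$ again by \cite[Proposition 3.6]{Herrera}. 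You correctly flag this as the delicate point but do not supply the mechanism; the descent of $\Phi$ to the pair consisting of the base algebra and the cocycle is what makes the argument close.
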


\begin{proof}
	Proceeding as in Example \ref{Re2} with all metric Lie algebras admitting an invertible parallel skew-symmetric tensor classified in \cite{Herrera}, we obtain all possible strict CKY tensors on a $5$-dimensional metric  Lie algebra with $\xi\in\zz$. 
	Moreover, $T$ is uniquely determined, up to scaling, by the corresponding  metric Lie algebra (see Remark \ref{rescaling}).

	It remains to show that each metric Lie algebras $(\ggo,\ip)$ in Table \ref{tabla1} are pairwise non-isometrically isomorphic. Clearly, elements from different families (rows) are not isomorphic. 
	Now we check metric Lie algebras within the same family. Since they are similar, we only explain one of those families.

	Consider the metric Lie algebra from Example \ref{Re2} with parameters $a_1,a_2,t\in\mathbb{R}_{>0}$, and assume $\Phi:(\ggo_{a_1,a_2,t},\ip_t)\to(\ggo_{a'_1,a'_2,t'},\ip_{t'})$ is an isometric isomorphism. First, we show that $t=t'$. Indeed, note that $\Phi(\xi)=\pm\xi$, and $\la \Phi(x),\xi\ra_{t'}=0$ for all $x\in \mathfrak{h}=\xi^{\perp}$, thus $\Phi|_{\xi^{\perp}}$ preserves $\xi^{\perp}$.
	Moreover,
	$\Phi|_{\mathfrak{h}}: (\mathfrak{h},[\cdot , \cdot]_{\mathfrak{h}\times\mathfrak{h}},\ip_{t}) \to (\mathfrak{h}, [\cdot , \cdot]_{\mathfrak{h}\times\mathfrak{h}},\ip_{t'})$ is an isometric automorphism as well.
	According to \cite[Proposition 3.6]{Herrera} we have that $(\mathfrak{h},[ \cdot, \cdot]_{\mathfrak{h}\times\mathfrak{h}},\ip_{t})$ is isometrically isomorphic to $(\mathfrak{h},[ \cdot, \cdot]_{\mathfrak{h}\times\mathfrak{h}},\ip_{t'})$ if and only of $t=t'$.
	
	Now we fix $t$ and we show that $(a_1,a_2)=(a'_1,a'_2)$. Using \eqref{extensioncon mu} and the fact that $\Phi|_{\mathfrak{h}}$ is an isomorphism, we have that $\mu'(\Phi(x),\Phi(y))=\mu(x,y)$, or equivalently, $S'=\Phi^tS\Phi$. Thus, $S$ and $S'$ are equivalent according to \eqref{equiv}. Again, according to \cite[Proposition 3.6]{Herrera}, $S$ is equivalent to $S'$ if and only if $(a_1,a_2)=(a'_1,a'_2)$.	
\end{proof}

\begin{table}[h!]
		\begin{tabular}{l l c l}
			\hline
			\hline
			
			 {\scriptsize  }
		 & {\scriptsize Metric Lie algebra} && {\scriptsize strict CKY $2$-form $\omega$}  \\
			\hline \hline
		\\
		\begin{tiny}
			$ (\RR \times \mathfrak e(2)\oplus_{\mu}\RR \xi,\ip_t)$
		\end{tiny} & 
	  \begin{tiny} 
	  	$\begin{matrix} 
			[e_1,e_2]_{\mu}=-f_2, \\
			[e_1,f_2]_{\mu}= e_2,            \\  
			[e_1,f_1]_{\mu}=2t{a_1}^{-1}\xi  \\  
			[e_2,f_2]_{\mu}=2t{a_2}^{-1}\xi, \\  
			\end{matrix} $
		\end{tiny} 
		&
		{\tiny $\begin{matrix} 
		\|e_i\|^2=\|f_i\|^2=t, \\
		\|\xi\|=1,\\
		a_1,a_2,t> 0,
    	\end{matrix} $}
			
		& {\scriptsize $ta_1e^1\wedge f^1+ta_2e^2\wedge f^2$}\\
		\\\hline 
	

		\\
		\begin{tiny}
		$(\RR^2\times\aff(\RR) \oplus_{\mu}\RR \xi,\ip_t)$\end{tiny} & 			\begin{tiny}$ 
		\begin{matrix} [e_1,f_1]_{\mu}=2t{a_1}^{-1} \xi, \\
		 [e_2,f_2]_{\mu}= f_2+2t{a_2}^{-1}\xi,  \\
		\end{matrix}$	\end{tiny}
	&
	    {\tiny $\begin{matrix} 
	    \|e_i\|^2=\|f_i\|^2=t, \\	
	    \|\xi\|=1,\\
	    a_1,a_2,t>0, 
	    \end{matrix}$ }
		 & {\scriptsize $ta_1e^1\wedge f^1+ta_2e^2\wedge f^2$ }\\
			\\ \hline
			\\
			{\tiny $(\mathfrak{r}'_{4,\lambda,0}\oplus_{\mu}\RR \xi,\ip_t)$}& \begin{tiny}
			$\begin{matrix} [e_1,f_1]_{\mu}=\lambda f_1+2t{a_1}^{-1}\xi, \\  [e_1,f_2]_{\mu}=-e_2,\\ 
			[e_1,e_2]_{\mu}= f_2, \\ 
			[e_2,f_2]_{\mu}=2t{a_2}^{-1}\xi, 
            \end{matrix}$\end{tiny}
        &
        {\tiny $\begin{matrix}
        	\|e_i\|^2=\|f_i\|^2=t,\\
        	 \|\xi\|=1,\\
        	 \lambda,a_1,a_2,t>0,
        \end{matrix}$	}
			 & {\scriptsize $ta_1e^1\wedge f^1+ta_2e^2\wedge f^2$}\\
			\\ \hline
			\\
			
		{\tiny $(\aff(\RR)\times \aff(\RR)\oplus_{\mu}\RR \xi,\ip_{t,s})$}& \begin{tiny}
		$\begin{matrix} [e_1,f_1]_{\mu}= f_1+2t{a_1}^{-1}\xi,\\
		[e_2,f_2]_{\mu}= f_2+2ts{a_2}^{-1}\xi,\\  
		\end{matrix}$\end{tiny}
	&
		{\tiny $\begin{matrix}
		\|e_1\|^2=\|f_1\|^2=t, \\
		\| e_2\|^2= \|f_2\|^2=ts,\\		
		\|\xi\|=1, t,s>0, s \leq 1 \\
		a_1,a_2> 0  \mbox{ if } s<1, \mbox{ or } \\
		a_1\geq a_2> 0  \mbox{ if } s=1
		\end{matrix}$	}
			 & {\scriptsize $ta_1e^1\wedge f^1+sta_2e^2\wedge f^2$}\\
		\\\hline
		\\
		
		{\tiny $(\mathfrak{d}_{4,\frac{1}{2}}\oplus_{\mu} \RR \xi,\ip_t)$}& \begin{tiny}
		$\begin{matrix} [e_1,f_1]_{\mu}=f_1+2tc^{-1}\xi, \\ [e_1,e_2]_{\mu}=\frac{1}{2}e_2, \\
		[e_1,f_2]_{\mu}=\frac{1}{2}f_2, \\
		[e_2,f_2]_{\mu}=f_1+2tc^{-1}\xi \\
		\end{matrix}$ \end{tiny}
	&
		{\tiny $\begin{matrix}
		 \|e_i\|^2=\|f_i\|^2=t,\\
		  \|\xi\|=1\\
		 c,t> 0, &
		\end{matrix}$ }
		 & {\scriptsize $tc(e^1\wedge f^1+e^2\wedge f^2)$}\\
		\\ \hline
		\\

		{\tiny $(\mathfrak{d}_{4,2}\oplus_{\mu} \RR \xi,\ip_t)$}& \begin{tiny}
		$\begin{matrix} [e_1,f_1]_{\mu}=-e_1+2tc^{-1}\xi, \\
		[e_1,e_2]_{\mu}=f_2,\\ 
		[f_1,e_2]_{\mu}=-\frac{1}{2}e_2, \\
		 [f_1,f_2]_{\mu}=\frac{1}{2}f_2\\
		[e_2,f_2]_{\mu}= 2tc^{-1}\xi\\
	    \end{matrix}$ \end{tiny}
    &
		{\tiny $\begin{matrix}
			\|e_i\|^2=\|f_i\|^2=t ,\\
			\|\xi\|=1 \\
			c,t> 0
		\end{matrix}$ }
		 & {\scriptsize $tc(e^1\wedge f^1+e^2\wedge f^2)$}\\
		\\ \hline
		\\
		
		{\tiny $(\mathfrak{d}'_{4,\frac\delta2}\oplus_{\mu} \RR \xi,\ip_t)$ }& \begin{tiny}
			$\begin{matrix} 
			[e_1,f_1]_{\mu}=f_1+2tc^{-1}\xi, \\ [e_1,e_2]_{\mu}=\frac{1}{2}e_2-\frac{1}{\delta}f_2\\
			[e_1,f_2]_{\mu}=\frac{1}{\delta}e_2+\frac{1}{2}f_2, \\ [e_2,f_2]_{\mu}=f_1+2tc^{-1}\xi
            \end{matrix}$ \end{tiny} 
    &
		{\tiny $\begin{matrix} 
				\|e_i\|^2=\|f_i\|^2=t ,\\
				\|\xi\|=1,\\
				\delta>0,c\neq 0
		\end{matrix}$	}
			& {\scriptsize $tc(e^1\wedge f^1+e^2\wedge f^2)$}\\
		    \\ \hline
		    \\
		    
			{\tiny $(\mathfrak{\RR}^4 \oplus_{\mu}\RR \xi,\ip)$}& \begin{tiny} $ 
			\begin{matrix} [e_1,f_1]_{\mu}=2{a_1}^{-1} \xi, \\ [e_2,f_2]_{\mu}=2{a_2}^{-1}\xi,  
			\end{matrix}$ \end{tiny} 
	&
		   {\tiny  $\begin{matrix} 
		  	\|e_i\|=\|f_i\|= \|\xi\|=1,\\
		  	a_1,a_2> 0
		    \end{matrix}$ }
			& {\scriptsize $a_1e^1\wedge f^1+a_2e^2\wedge f^2$}\\
		
	    \\ \hline
		\end{tabular}
	\caption{ {\scriptsize The strict CKY $2$-forms 
		are given in the metric dual basis of $1$-forms
		$\{e^1,f^1,e^2,f^2\}$ } }
	\label{tabla1}
\end{table} 

Now we unify our notation with  \cite{AFV}. The Lie algebra $\RR \times \mathfrak e(2)\oplus_{\mu}\RR \xi$ in Table \ref{tabla1} corresponds to $\ggo_3$ in the notation of \cite{AFV}, similarly $\RR^2\times \aff(\RR) \oplus_{\mu}\RR\xi\cong \ggo_2$, $\mathfrak{r}'_{4,\lambda,0}\oplus_{\mu_2}\RR \xi \cong \mathfrak{g}_8^{1/\lambda}$, $\aff(\RR)\times \aff(\RR)\oplus_{\mu}\RR \xi \cong \ggo_4$,
$\mathfrak{d}_{4,\frac{1}{2}}\oplus_{\mu} \RR \xi \cong \ggo_5$, $\mathfrak{d}_{4,2}\oplus_{\mu} \RR \xi \cong \ggo_6$,  $\mathfrak{d}'_{4,\frac\delta2}\oplus_{\mu} \RR \xi \cong \ggo_7^{\delta}$ and $(\RR^4 \oplus_{\mu} \RR \xi,\ip)\cong \mathfrak{h}_5$. In Table \ref{NotacionNueva} we exhibit the metric Lie algebras and the strict CKY tensor with the new notation.

{\tiny
\begin{table}[h!]
	\begin{tabular}{c c}
		\hline \hline
		{\scriptsize Metric Lie algebra }  &  {\scriptsize strict CKY $2$-form $\omega$} \\ \hline \hline 
		\begin{tiny}
			$\begin{matrix} (\mathfrak{g}_3,\ip_{t,a_1,a_2})\\ 
				&\\
				[E_1,E_2]=-E_3\\
				[E_1,E_3]=E_2\\
				[E_1,E_4]=-E_5\\
				[E_2,E_3]=-E_5\end{matrix}$
		\end{tiny}
	\begin{tiny}  
		 $\la \cdot,\cdot \ra_{t,a_1,a_2}=\begin{pmatrix} 
				t & 0 & 0 & 0 & 0 \\
				0&t& 0&0 & 0\\
				0&0&t&0&0 \\
				0&0& 0& \frac{a_1^2t}{a_2^2}&0 \\
				0&0& 0& 0&\frac{4t^2}{a_2^2} \\
			\end{pmatrix}$\end{tiny}
& $\frac{a_1^2 t}{a_2}e^{14}+ a_2te^{23}$ \\
		 \hline 
		\begin{tiny}
			$\begin{matrix} (\mathfrak{g}_2,\ip_{t,a_1,a_2})\\ 
				&\\
				[E_1,E_2]=E_2-E_5\\
				[E_3,E_4]=-E_5\end{matrix}$
		\end{tiny}
		\begin{tiny}  
		 $\la \cdot,\cdot \ra_{t,a_1,a_2}=\begin{pmatrix} 
				t & 0 & 0 & 0 & 0 \\
				0&t& 0&0 \\
				0&0&a_1^2t&0&0 \\
				0&0& 0& \frac{t}{a_2^2}&0 \\
				0&0& 0& 0&\frac{4t^2}{a_2^2} \\
			\end{pmatrix}$\end{tiny} 
	& $ta_2e^{12}+ \frac{ta_1^2}{a_2}e^{34}$ \\
	     \hline 
	     
	     \begin{tiny}
	     	$\begin{matrix} (\mathfrak{g}_8^{\frac1\lambda},\ip_{t,a_1,a_2})\\ 
	     		&\\
	     		[E_1,E_4]=-E_1-E_5\\
	     		[E_2,E_3]=-E_5\\
	     		[E_2,E_4]=\frac1\lambda E_3\\
     			[E_3,E_4]=-\frac1\lambda E_2\end{matrix}$
	     \end{tiny}
	     \begin{tiny}  
	     	$\la \cdot,\cdot \ra_{t,a_1,a_2}=\begin{pmatrix} 
	     		(\frac{a_1\lambda }{a_2})^2t & 0 & 0 & 0 & 0 \\
	     		0&t& 0&0 \\
	     		0&0&t&0&0 \\
	     		0&0& 0& \frac{t}{\lambda^2}&0 \\
	     		0&0& 0& 0&\frac{4t^2}{a_2^2} \\
	     	\end{pmatrix}$\end{tiny}
     & $-\frac{a_1^2 t}{a_2}e^{14}- a_2te^{23}$ \\
	     \hline 
	     
	     	     \begin{tiny}
	     	$\begin{matrix} (\mathfrak{g}_4,\ip_{s,t,a_1,a_2}) \\ 
	     		&\\
	     		[E_1,E_2]=E_2-E_5\\
	     		[E_3,E_4]=E_4-E_5\end{matrix}$
	     \end{tiny} 
	     \begin{tiny}  
	     	$\la \cdot,\cdot \ra_{s,t,a_1,a_2}=\begin{pmatrix} 
	     		t & 0 & 0 & 0 & 0 \\
	     		0&ta_1^2& 0&0&0 \\
	     		0&0&ts&0&0 \\
	     		0&0& 0& \frac{ta_2^2}{s}&0 \\
	     		0&0& 0& 0& 4t^2 \\
	     	\end{pmatrix}$\end{tiny}
     	& $ta_1^2e^{12}+ ta_2^2e^{34}$ \\
	     \hline 
	    
		\begin{tiny}
			$\begin{matrix} (\mathfrak{g}_5,\ip_{t,c})\\ 
				&\\
				[E_1,E_2]=E_3-E_5\\
				[E_1,E_4]=-\frac{1}{2}E_1\\
				[E_2,E_4]=-\frac{1}{2}E_2\\
				[E_3,E_4]=-E_3+E_5\end{matrix}$
		\end{tiny}
		\begin{tiny}  
		 $\la \cdot,\cdot \ra_{t,c}=\begin{pmatrix} 
				t & 0 & 0 & 0 & 0 \\
				0&t& 0&0&0 \\
				0&0&t&0&0 \\
				0&0& 0& t&0 \\
				0&0& 0& 0&\frac{4t^2}{c^2} \\
			\end{pmatrix}$\end{tiny}
		& $tc(e^{12}-e^{34})$ \\
	\hline
	
		     	     \begin{tiny}
		$\begin{matrix} (\mathfrak{g}_6,\ip_{t,a_1,a_2}) \\ 
			&\\
			[E_1,E_2]=E_3\\
			[E_1,E_4]=-2E_1\\
			[E_2,E_3]=-E5\\
			[E_2,E_4]=E_2\\
			[E_3,E_4]=-E_3\end{matrix}$
	\end{tiny}
	\begin{tiny}  
		$\la \cdot,\cdot \ra_{t,c}=\begin{pmatrix} 
			t+\frac{4t^2}{c^2} & 0 & 0 & 0 & \frac{4t^2}{c^2} \\
			0&t& 0&0 &0\\
			0&0&t&0&0 \\
			0&0& 0& 4t&0 \\
			\frac{4t^2}{c^2}&0& 0& 0&\frac{4t^2}{c^2} \\
		\end{pmatrix}$\end{tiny}
		& $tc(2e^{14}+e^{23})$ \\
	\hline 
	
		\begin{tiny}
			$\begin{matrix} (\mathfrak{g}_7^{\delta},\ip_{t,c})\\ 
				&\\
				[E_1,E_2]=E_3-E_5\\
				[E_1,E_4]=-\frac\delta2 E_1+E_2\\
				[E_2,E_4]=-E_1-\frac\delta2 E_2 \\
				[E_3,E_4]=-\delta E_3+\delta E_5\end{matrix}$
		\end{tiny}
		\begin{tiny}  
		 $\la \cdot,\cdot \ra_{t,c}=\begin{pmatrix} 
				t & 0 & 0 & 0 & 0 \\
				0&t& 0&0&0 \\
				0&0&t&0&0 \\
				0&0& 0& \delta^2t &0 \\
				0&0& 0& 0&\frac{4t^2}{c^2} \\
			\end{pmatrix}$\end{tiny} 
			& $tc(e^{12}-\delta e^{34})$ \\
	\hline
	
			\begin{tiny}
		$\begin{matrix} (\mathfrak{h}_5,\ip_{a_1,a_2})\\ 
			&\\
			[E_1,E_2]=E_5\\
			[E_3,E_4]=E_5 \end{matrix}$
	\end{tiny}
	\begin{tiny}  
		$\la \cdot,\cdot \ra_{a_1,a_2}=\begin{pmatrix} 
			a_1^2 & 0 & 0 & 0 & 0 \\
			0&\frac{1}{a_2^2}& 0&0&0 \\
			0&0&1&0&0 \\
			0&0& 0& 1&0 \\
			0&0& 0& 0&\frac{4}{a_2^2} \\
		\end{pmatrix}$\end{tiny}
		& $\frac{a_1^2}{a_2}e^{12}+a_2e^{34} $ \\
	\hline

	\end{tabular}
	\caption{ {\scriptsize The metrics are given in the basis $\{E_1,E_2,E_3,E_4,E_5\}$ and the strict CKY $2$-forms are given in the metric dual basis $\{e^1,e^2,e^3,e^4,e^5\}$ where $e^{ij}=e^i\wedge e^j$. For each metric Lie algebra, the parameters $s,t,a_1,a_2,c,\delta,\lambda$ satisfy the conditions of the corresponding metric Lie algebra in Table \ref{tabla1}. } }
	\label{NotacionNueva}
\end{table}
}

\begin{rem}
In \cite[Section 3.1]{AFV}, central extensions were considered to  
classify five dimensional Sasakian Lie algebras. More precisely, they use the classification of $4$-dimensional K\"ahler Lie algebras $(\mathfrak h,\la \cdot, \cdot \ra,J)$ given in \cite{Ov}. Since in \cite{Herrera} the author classifies all  parallel  skew-symmetric tensors (not only complex structures), we can recover the classification of Sasakian five dimensional Lie algebras.
Indeed, these Sasakian structures are obtained when the parallel skew-symmetric tensors on $\mathfrak h$ are in addition complex structures (for example the Sasakian structure on $\ggo_3$ exhibited in \cite{AFV} is obtained with $a_1=a_2=1$ in the first row of Table \ref{tabla1}). 
In \cite{AFV}, the authors also show that the simply connected Lie group associated to the unimodular Lie algebra $\ggo_3$ admits lattices, that is, co-compact discrete subgroups. The other unimodular Lie algebra in Table \ref{NotacionNueva} is $\mathfrak h_5=\mathfrak{\RR}^4 \oplus_{\mu}\RR \xi$, and it is known that the Heisenberg Lie group $H_5$ with Lie algebra 	$\mathfrak h_5$ also admits lattices.
\end{rem}

\section{CKY tensors on Lie algebras with $\dim\zz>1$}

In this section we study Lie algebras with $\dim\zz>1$ admitting CKY tensors, where $\zz$ denotes the center of the Lie algebra $\ggo$. 
In particular, such a Lie algebra cannot admit any Sasakian structure, since the center of a Sasakian Lie algebra is trivial  or has dimension $1$. As a consequence of this condition on the center of the Lie algebra, we have the following result.

\begin{lem}\label{centro}
	Let $T$ be a strict CKY tensor on the metric Lie algebra $(\ggo, \la \cdot , \cdot \ra)$, with associated 1-form $\theta$ and dual vector $\xi$. If $\on{dim}\zz\geq 2$, then $\theta(z)=0$ for all $z\in \mathfrak{z}$. In particular $\xi \perp \zz$.
\end{lem}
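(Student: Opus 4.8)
The plan is to exploit the CKY condition \eqref{ckyequationliealgebra} by feeding in central vectors, where the right-hand side simplifies dramatically and the left-hand side is controlled by Koszul's formula \eqref{koszul}. Let $z \in \zz$. Since $z$ is central, $[z,\cdot]=0$, and Koszul's formula \eqref{koszul} gives, for all $x,y\in\ggo$,
\begin{equation*}
2\la\nabla_x z, y\ra = -\la[z,y],x\ra + \la[z,x],y\ra = 0,
\end{equation*}
so $\nabla_x z = 0$ for every $x$; moreover $\la\nabla_z x, y\ra = -\la\nabla_z y, x\ra$ and also $2\la\nabla_z x, y\ra = \la[z,x],y\ra - \la[x,y],z\ra + \la[y,z],x\ra = -\la[x,y],z\ra$, so $\nabla_z = -\tfrac12\ad_z^t$ acts as a skew map determined by the bracket paired against $z$.

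First I would put $x=y=z$ in \eqref{ckyequationliealgebra}. The left-hand side becomes $2\la(\nabla_z T)z, z\ra$, and since $\nabla_z$ is skew and $(\nabla_z T)z = \nabla_z(Tz) - T\nabla_z z$, pairing against $z$ kills the $\nabla_z(Tz)$ term (skewness) and $\nabla_z z = 0$ from the computation above; so the left side vanishes. The right side of \eqref{ckyequationliealgebra} with $x=y=z$ is $2\la z,z\ra\theta(z) - 2\la z,z\ra\theta(z) = 0$, which unfortunately gives no information. So this choice is not enough on its own, and I would instead take $x = y = z$ but with a companion vector in the third slot, or better, use two independent central vectors.

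The key step is to use $\dim\zz\ge 2$: pick $z_1, z_2 \in \zz$ with $z_1 \perp z_2$ (possible since $\dim\zz\ge2$), and set $x = y = z_1$, $z = z_2$ in \eqref{ckyequationliealgebra}. The left-hand side is $2\la(\nabla_{z_1}T)z_1, z_2\ra = 2\la \nabla_{z_1}(Tz_1) - T\nabla_{z_1}z_1,\ z_2\ra = 2\la\nabla_{z_1}(Tz_1), z_2\ra$ since $\nabla_{z_1}z_1 = 0$; and $\nabla_{z_1}(Tz_1) = -\tfrac12[\,\cdot\,,Tz_1]^\sharp$-type expression paired against $z_2$ gives $\la\nabla_{z_1}(Tz_1),z_2\ra = -\tfrac12\la[Tz_1, z_1], z_2\ra$... but wait, I should be careful: more directly, since $\nabla_{z_1}$ is skew, $\la\nabla_{z_1}(Tz_1),z_2\ra = -\la Tz_1, \nabla_{z_1}z_2\ra = 0$ because $\nabla_{z_1}z_2 = 0$ (as $z_2$ is central, $\nabla_x z_2 = 0$ for all $x$). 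So the left-hand side vanishes. The right-hand side is $2\la z_1,z_1\ra\theta(z_2) - \la z_1,z_2\ra\theta(z_1) - \la z_1,z_2\ra\theta(z_1) = 2\|z_1\|^2\theta(z_2)$ using $z_1\perp z_2$. Hence $\theta(z_2) = 0$ whenever $z_2$ lies in the center and is orthogonal to some nonzero central vector.

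To conclude that $\theta(z) = 0$ for \emph{all} $z\in\zz$ (not just those orthogonal to another central vector), I would argue as follows: given any $z\in\zz$, if $\dim\zz\ge2$ I can find a nonzero $w\in\zz$ with $w\perp z$, and then both $z$ and $z+w$ are central and each is orthogonal to some nonzero central vector — indeed $z\perp w$ and $(z+w)\perp(z-w)$ provided $\|z\|=\|w\|$, which I can arrange by rescaling $w$; so the previous step gives $\theta(z)=0$ and $\theta(z+w)=0$, whence $\theta(w)=0$ too. Alternatively and more cleanly: the argument above shows $\theta$ vanishes on $z^\perp\cap\zz$ for every nonzero $z\in\zz$; since $\dim\zz\ge2$, the union of these hyperplanes-in-$\zz$ over all nonzero $z\in\zz$ is all of $\zz$ (any vector $v\in\zz$ lies in $w^\perp\cap\zz$ for any $w\in\zz$ perpendicular to $v$, and such $w\ne0$ exists). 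Therefore $\theta(z)=0$ for all $z\in\zz$, i.e., $\la\xi,z\ra=0$ for all $z\in\zz$, which is exactly $\xi\perp\zz$. The main obstacle is simply keeping the Levi-Civita bookkeeping straight — verifying $\nabla_x z = 0$ for central $z$ and that $\nabla_{z_1}$ is skew — after which the CKY identity does all the work; the orthogonality hypothesis $\dim\zz\ge 2$ enters precisely to make the $\la x,y\ra$ coefficient on the right-hand side survive while the $\la y,z\ra$ and $\la x,z\ra$ terms vanish.
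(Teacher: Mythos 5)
Your proposal follows essentially the same route as the paper: apply the CKY condition \eqref{ckyequationliealgebra} to the triple $(z_1,z_1,z_2)$ with $z_1,z_2\in\zz$ orthogonal, observe that the left-hand side vanishes because $\nabla_{z_1}z_1=\nabla_{z_1}z_2=0$, and read off $2\|z_1\|^2\theta(z_2)=0$; the concluding observation that every vector of $\zz$ is orthogonal to some nonzero central vector (using $\dim\zz\ge 2$) is the same implicit step the paper uses. One correction is needed, though: your auxiliary claim that $\nabla_x z=0$ for \emph{every} $x\in\ggo$ when $z\in\zz$ is false. Koszul's formula \eqref{koszul} gives $2\la\nabla_x z,y\ra=\la[x,z],y\ra-\la[z,y],x\ra+\la[y,x],z\ra=\la[y,x],z\ra$, which need not vanish (e.g.\ in $\mathfrak h_3$ with $[e_1,e_2]=e_3$ one has $\nabla_{e_1}e_3\ne 0$); indeed this contradicts your own later identity $2\la\nabla_z x,y\ra=-\la[x,y],z\ra$ together with torsion-freeness, which forces $\nabla_x z=\nabla_z x$. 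What \emph{is} true, and is all your argument actually uses, is that $\nabla_{z_1}z_2=0$ whenever \emph{both} arguments are central, since then all three Koszul terms vanish. With that justification substituted, the proof is correct and coincides with the paper's.
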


\begin{proof}
	Let $z_1,z_2\in \mathfrak{z}$,  such that $z_1,z_2\neq0$ and $\la z_1,z_2\ra=0$. From \eqref{ckyequationliealgebra} applied to $z_1,z_2$ we have
	\begin{align*}
	2\langle \left( \nabla_{z_1} T \right) z_1,z_2 \rangle &=2\langle z_1,z_1\rangle \theta(z_2)-\langle z_1,z_2\rangle \theta(z_1)-\langle z_1,z_2\rangle \theta(z_1).
	\end{align*}
	Since $\langle \left( \nabla_{z_1} T \right) z_1,z_2 \rangle=\langle  \nabla_{z_1} T  z_1,z_2 \rangle+\langle \nabla_{z_1}   z_1,Tz_2 \rangle =0$,
	we obtain that $\theta(z_2)=0$. 
\end{proof}

As a consequence of $\xi \in \zz^{\perp}$ we have the following condition:

\begin{lem}
	\label{zperpgg}
	 If $T$ is a strict CKY tensor on the metric Lie algebra $(\ggo, \la \cdot , \cdot \ra)$, then $T$ does not preserve the decomposition $\ggo=\zz\oplus \zz^{\perp}$.
\end{lem}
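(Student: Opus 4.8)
The plan is to argue by contradiction. Suppose $T$ preserves the orthogonal decomposition $\ggo=\zz\oplus\zz^{\perp}$, so that $T\zz\subseteq\zz$ and $T\zz^{\perp}\subseteq\zz^{\perp}$. Since $T$ is skew-symmetric, its restriction $T|_{\zz}$ is a skew-symmetric endomorphism of $\zz$. The first step is to extract information from the CKY condition \eqref{ckyequationliealgebra} evaluated on central elements: for $z_1,z_2\in\zz$ one has $\nabla_{z_1}z_1=0$ (from \eqref{koszul}, since $z_1$ is central all three bracket terms vanish), hence $(\nabla_{z_1}T)z_1=\nabla_{z_1}(Tz_1)-T\nabla_{z_1}z_1=\nabla_{z_1}(Tz_1)$; and because $T$ preserves $\zz$, the element $Tz_1$ is again central, so $\nabla_{z_1}(Tz_1)$ — being a Levi-Civita covariant derivative of a central vector along a central vector — also vanishes by \eqref{koszul}. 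Therefore the left-hand side of \eqref{ckyequationliealgebra} vanishes identically on $\zz\times\zz$, and we recover (as in Lemma \ref{centro}) that $\theta|_{\zz}=0$, i.e. $\xi\in\zz^{\perp}$; this much we already know, but the point now is to push further using that $T$ stabilizes $\zz^\perp$.

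The key step is to use \eqref{ckyequationliealgebra} with $x=z\in\zz$ and $y,z'$ arbitrary, exploiting the vanishing of brackets with $z$. For $z\in\zz$ and $x\in\ggo$ one computes $\nabla_z x$ and $\nabla_x z$ from \eqref{koszul}: all terms involving $[z,\cdot]$ drop, leaving $2\la\nabla_z x,w\ra=-\la[x,w],z\ra$ and $2\la\nabla_x z,w\ra=\la[w,x],z\ra$, so $\nabla_z x=-\nabla_x z=:-\tfrac12 j(z)x$ where $j(z)$ is the skew-symmetric endomorphism with $\la j(z)x,w\ra=\la z,[x,w]\ra$. Plugging $x=z$ into \eqref{ckyequationliealgebra} and using $\nabla_z z=0$ gives, for all $y,w$,
\begin{equation*}
\la(\nabla_y T)z,w\ra=2\la z,y\ra\theta(w)-\la y,w\ra\theta(z)-\la z,w\ra\theta(y)=2\la z,y\ra\theta(w),
\end{equation*}
since $\theta(z)=0$ and $\la z,w\ra\theta(y)$ is symmetrized away — more precisely, one takes $z_1,z_2\in\zz$ orthogonal and evaluates as in the proof of Lemma \ref{centro} to isolate $(\nabla_y T)z=-\nabla_y(j\text{-terms})$ cleanly. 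The cleanest route: from $\la(\nabla_yT)z,w\ra=\la\nabla_y(Tz),w\ra+\la\nabla_y z,Tw\ra$ and the formulas for $\nabla_y z$, together with $Tz\in\zz$ so $\nabla_y(Tz)=\tfrac12 j(Tz)y$, one obtains an identity relating $j(Tz)$, $j(z)$ and $\theta$; choosing $y=\xi$ and using Theorem \ref{ad1} that $T$ is invertible on $\xi^{\perp}$ and $\nabla_\xi\xi=0$, one derives that $\xi$ must have nonzero component in $\zz^{\perp}\cap\zz=0$, or directly that $\theta\equiv 0$ on a subspace forcing $\theta=0$, contradicting strictness.

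The main obstacle I anticipate is organizing the algebra so that the contradiction emerges transparently rather than through a case analysis: the identity $\la(\nabla_yT)z,w\ra=2\la z,y\ra\theta(w)$ holds only after the symmetric combination in \eqref{ckyequationliealgebra} is used correctly, and one must be careful that $\nabla_y z\in\zz^\perp$ in general (not central), so $T\nabla_y z$ need not be controlled unless one knows $T$ preserves $\zz^\perp$ — which is exactly the assumption under contradiction, and this is what makes the argument close. Concretely, I expect the finish to be: evaluate the displayed identity at $y=w=\xi$ and $z\in\zz$ arbitrary to get $2\la z,\xi\ra\theta(\xi)=\la(\nabla_\xi T)z,\xi\ra$; expand the right side using $\nabla_\xi\xi=0$ and $Tz\in\zz$ to show it equals $-\la z,(\nabla_\xi T)\xi\ra=0$ by Theorem \ref{ad1}; hence $\la z,\xi\ra\,\theta(\xi)=0$ for all $z\in\zz$, which is automatic since $\xi\perp\zz$ — so instead one evaluates at a pair $(y,w)$ with $y\notin\zz^\perp$ to produce a genuine obstruction, ultimately concluding that $\dim\zz\le 1$, contradicting the hypothesis $\dim\zz\ge 2$. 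I would present the computation with $z_1,z_2\in\zz$ and a third vector outside $\zz^\perp$, mirroring the structure of Lemma \ref{centro} but with one more derivative, and pin down the contradiction there.
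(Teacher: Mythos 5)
Your proposal has the right raw ingredients (strictness forces $\theta(\xi)=\|\xi\|^2\neq 0$, $\xi\perp\zz$ from Lemma \ref{centro}, and brackets with central elements vanish), but it never lands on the one evaluation that actually produces the contradiction, and the evaluations you do try are either wrong or vacuous. Your displayed identity $\la(\nabla_y T)z,w\ra=2\la z,y\ra\theta(w)$ is obtained by silently dropping the term $\la(\nabla_z T)y,w\ra$ from the symmetrized CKY condition; that term does not vanish for general $y$ (only $\nabla_z z=0$ is available, not $(\nabla_z T)y=0$), so the identity is unjustified. Your concrete attempt $y=w=\xi$ then reduces, as you yourself note, to $0=0$, and the closing sentence (``one evaluates at a pair $(y,w)$ with $y\notin\zz^\perp$ \dots ultimately concluding that $\dim\zz\le 1$'') is a placeholder, not an argument: you never specify the triple, and the contradiction in fact has nothing to do with bounding $\dim\zz$.

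The missing step is simply to evaluate the CKY condition \eqref{ckyequationliealgebra} at the triple $(z,z,\xi)$ with $z\in\zz$ unitary. The right-hand side is $2\|z\|^2\theta(\xi)-2\la z,\xi\ra\theta(z)=2\theta(\xi)=2\neq 0$, using $\xi\perp\zz$ and $\|\xi\|=1$. The left-hand side is $2\la(\nabla_zT)z,\xi\ra=2\la\nabla_z(Tz),\xi\ra$ since $\nabla_zz=0$, and the Koszul formula \eqref{koszul} turns this into $-\la[Tz,\xi],z\ra$, because the two brackets involving the central element $z$ drop out. Hence $\la[Tz,\xi],z\ra=-2$, so $[Tz,\xi]\neq 0$ and $Tz\notin\zz$; no contradiction argument or case analysis on $(y,w)$ is needed. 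This is exactly the computation you circle around in your first step (where you correctly observe that $T\zz\subseteq\zz$ would force $(\nabla_zT)z=0$), but you only use it there to rederive $\theta|_{\zz}=0$ instead of pairing it against $\xi$, which is where the non-vanishing right-hand side lives.
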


\begin{proof}
	Let $\theta$ and $\xi$ be the $1$-form  and dual vector associated to $T$. Suppose $z$ is a unitary vector on the $\zz$, then from 
	\eqref{ckyequationliealgebra} we obtain $$2\la (\nabla_z T)z,\xi\ra=2\|z\|^2\theta(\xi)=2,$$ 
	which implies $\la \nabla_zTz,\xi\ra=1$.
	Then, it follows from equation \eqref{koszul} that $\la [Tz,\xi],z\ra=-2$, and therefore $T\zz\nsubseteq\zz$.
\end{proof}


In the next lemma we apply  \eqref{ckyequationliealgebra} to different choices of $x,y,z\in\ggo$, according to the decomposition $\ggo=\zz\oplus\zz^\perp$. 
We summarize the conditions that a Lie algebra must satisfy in order to admit a strict CKY tensor. These conditions will be useful in the next section. Note that if we have three central elements, then both sides of CKY condition \eqref{ckyequationliealgebra} are identically zero, since $\zz \perp \xi$.

\begin{lem}\label{lema21}
	If $(\ggo, \la \cdot , \cdot \ra)$ is a metric Lie algebra with  $\on{dim}\zz \geq2$ that admits a strict CKY tensor $T$  with associated $1$-form $\theta$ and dual vector $\xi$, then we have:
	\begin{itemize}
		\item[\ri] for $ z_1,z_2\in \zz, x\in \zz^\perp$,		
		\begin{align}
			\label{lema21-1}
			\langle [x,Tz_1],z_2\rangle=\langle [x,Tz_2],z_1\rangle=2\langle z_1,z_2\rangle \theta(x),
		\end{align}
		\item[\rii] for $ z\in \zz, x,y\in \zz^\perp$,
		\begin{align}
			\label{xyz1}
			\langle [x,Ty],z\rangle  +2\langle [Tz,x],y\rangle +2\langle [Tz,y],x\rangle +\langle [y,Tx],z\rangle &=0,\\  
			\label{xyz2}
			-\langle [Tz,x],y\rangle -\langle [Tz,y],x\rangle +\langle [y,x],Tz\rangle+2\langle [Ty,x],z\rangle-\langle [Tx,y],z\rangle  &=0,
		\end{align} 
		\item[\riii] for $x,y\in \zz^\perp$ with $x,y\perp\xi$,
		\begin{align}
			\label{031}
			\langle [\xi,Tx],y\rangle-\langle [Tx,y],\xi\rangle +\langle [y,\xi],Tx\rangle +2\langle [Ty,\xi],x\rangle  +2\langle [Ty,x],\xi\rangle=&-2\langle x,y\rangle,\\
			\label{032}
			\langle [x,Ty],\xi\rangle-\langle [Ty,\xi],x\rangle+\langle [\xi,x],Ty\rangle+ \langle [y,Tx],\xi\rangle-\langle [Tx,\xi],y\rangle+\langle [\xi,y],Tx\rangle&=4\langle x,y\rangle,\\
			\label{xixix}
			\langle [\xi,Tx],\xi\rangle&=0,
		\end{align} 
		\item[\riv]	for $x,y,z\in \zz^\perp$ with $x,y,z\perp\xi$,
		\begin{equation}\label{ky_equation}
			\la (\nabla_xT)y,z\ra+\la (\nabla_yT)x,z\ra=0.
		\end{equation} 
	\end{itemize}
\end{lem}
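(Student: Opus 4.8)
The plan is to unwind the CKY condition \eqref{ckyequationliealgebra} term by term, using only the Koszul formula \eqref{koszul} and the facts established just before: $\theta$ vanishes on $\zz$ (Lemma \ref{centro}), so $\xi\perp\zz$; and $T$ is skew-symmetric with respect to $\ip$. The common mechanism for all the identities is to rewrite each occurrence of $\la(\nabla_aT)b,c\ra$ as $\la\nabla_a(Tb),c\ra+\la\nabla_a b,Tc\ra$ (using skew-symmetry of $T$), then expand every $\la\nabla_a b,c\ra$ via \eqref{koszul} into the three bracket terms. Once everything is expressed through brackets and inner products, the right-hand sides simplify according to which of $x,y,z$ lie in $\zz$ or are orthogonal to $\xi$.

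The key steps, in order. \textbf{Step 1 (item \ri).} Apply \eqref{ckyequationliealgebra} with $x:=z_1\in\zz$, $y:=z_2\in\zz$, $z:=x\in\zz^\perp$. Since $z_1,z_2$ are central, $\nabla_{z_1}z_2$ and several bracket terms drop out; what survives is exactly $\la[x,Tz_1],z_2\ra+\la[x,Tz_2],z_1\ra$ on the left, and the right-hand side is $2\la z_1,z_2\ra\theta(x)-\la z_2,x\ra\theta(z_1)-\la z_1,x\ra\theta(z_2)$, whose last two terms vanish because $z_i\perp x$ and also $\theta(z_i)=0$. A separate polarization (taking $z_1=z_2$ or varying) gives that each of the two left summands individually equals $\la z_1,z_2\ra\theta(x)$ times the appropriate constant, yielding \eqref{lema21-1}; here one also uses Proposition \ref{xi_unique} / Theorem \ref{ad1} on how $\nabla_{z}z$ behaves. \textbf{Step 2 (item \rii).} Take two elements in $\zz^\perp$ and one in $\zz$: first $x,y\in\zz^\perp$, $z\in\zz$ to get \eqref{xyz1}, then swap the roles (i.e. use the symmetry of \eqref{ckyequationliealgebra} in its first two slots versus the choice of third slot, or equivalently run \eqref{ckyequationliealgebra} with the central element in a first slot) to get \eqref{xyz2}. \textbf{Step 3 (item \riii).} Now specialize to $x,y\in\zz^\perp$ with $x,y\perp\xi$, and evaluate \eqref{ckyequationliealgebra} in the three slot-patterns $(x,y,\xi)$, $(\xi,x,y)$ and $(x,y\text{ with }z=\xi$ after a permutation$)$; the nonzero right-hand sides $-2\la x,y\ra$ and $4\la x,y\ra$ come from the terms $2\la x,y\ra\theta(\xi)=2\la x,y\ra$ (using $\|\xi\|=1$, Remark \ref{rescaling}) while the $\theta(x),\theta(y)$ contributions vanish because $x,y\perp\xi$. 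Taking $x=y$ (or $y=x$, $z=\xi$) and using $T\xi=0$ gives \eqref{xixix}. \textbf{Step 4 (item \riv).} For $x,y,z\in\zz^\perp$ all orthogonal to $\xi$, every term on the right of \eqref{ckyequationliealgebra} carries a factor $\theta(\cdot)$ evaluated on one of $x,y,z$, hence is zero; this is \eqref{ky_equation}, the statement that $T$ restricted to $\xi^\perp$ behaves like a KY tensor in these directions.

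The main obstacle is purely bookkeeping: making sure, in Steps 2 and 3, that the Koszul expansion of the several $\nabla$-terms is done consistently and that the correct bracket terms cancel. In particular, writing $\la(\nabla_aT)b,c\ra=\tfrac12\big(\la[a,Tb],c\ra-\la[Tb,c],a\ra+\la[c,a],Tb\ra\big)+\tfrac12\big(\la[a,b],Tc\ra-\la[b,Tc],a\ra+\la[Tc,a],b\ra\big)$ and then symmetrizing in $a\leftrightarrow b$ produces exactly the asymmetric-looking combinations of brackets appearing in \eqref{xyz1}–\eqref{032}; one must track the factor of $2$ carefully (it comes from the two identical terms that appear after symmetrization) and use $T$'s skew-symmetry to merge $\la[\cdot,\cdot],T(\cdot)\ra$-type terms. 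The right-hand sides then fall out immediately from $\theta(\xi)=1$, $\theta|_\zz=0$, and the orthogonality hypotheses, so no further ideas are needed beyond this careful expansion.
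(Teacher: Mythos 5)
Your overall strategy---expanding every term of \eqref{ckyequationliealgebra} through the Koszul formula \eqref{koszul} for each slot-pattern of the arguments relative to the decomposition $\ggo=\zz\oplus\zz^\perp$ and to $\xi$, then evaluating the right-hand sides via $\theta|_\zz=0$ and $\theta(\xi)=1$---is exactly the paper's. But your Step 1 has a genuine gap. The pattern $(z_1,z_2,x)$, with both central elements in the two symmetric slots, yields (after the Koszul expansion and clearing the factor $\tfrac12$) only the \emph{symmetric} relation $\la [x,Tz_1],z_2\ra+\la [x,Tz_2],z_1\ra=4\la z_1,z_2\ra\theta(x)$. This pins down only the symmetric part of the bilinear map $B(z_1,z_2):=\la [x,Tz_1],z_2\ra$, and polarization (setting $z_1=z_2$) recovers nothing beyond that symmetric part. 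The content of \eqref{lema21-1} is precisely that $B$ \emph{is} symmetric and that each summand separately equals $2\la z_1,z_2\ra\theta(x)$; to get this you must also run the CKY condition in the pattern $(x,z_1,z_2)$, whose expansion produces the asymmetric relation $\la [x,Tz_1],z_2\ra+2\la [Tz_2,x],z_1\ra=-2\la z_1,z_2\ra\theta(x)$, and then solve the resulting $2\times 2$ linear system in $B(z_1,z_2)$ and $B(z_2,z_1)$. This second slot-pattern is exactly what the paper's proof supplies, and without it the claimed separation of the two summands does not follow.

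A smaller slip: \eqref{xixix} does not follow by ``taking $x=y$'' in \eqref{031} or \eqref{032} (that only yields relations involving $\la [\xi,Tx],x\ra$ and $\la [Tx,x],\xi\ra$). It is the CKY condition evaluated on a triple containing $\xi$ twice, e.g.\ $(\xi,x,\xi)$ with $x\perp\xi$: there the right-hand side reduces to $-\la\xi,\xi\ra\theta(x)=0$, and the left-hand side collapses to $\la [\xi,Tx],\xi\ra$ using $T\xi=0$ and the Koszul formula. Steps 2 and 4 are fine as described, and coincide with what the paper leaves to the reader under ``in the same way we can obtain the other conditions.''
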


\begin{proof}
	Let $z_1,z_2\in \zz$ and $x\in \zz^\perp$. Firstly, we consider \eqref{ckyequationliealgebra} for $(x,z_1,z_2)$,
	$$\langle\left( \nabla_x T \right) z_1,z_2 \rangle +\langle \left( \nabla_{z_1} T \right) x,z_2 \rangle=2\langle x,z_1\rangle \theta(z_2)-\langle z_1,z_2\rangle \theta(x)-\langle x,z_2\rangle \theta(z_1).$$ By Lemma \ref{centro} $\theta(z_1)=\theta(z_2)=0$, then we have 
	\begin{align*}
	\langle \nabla_x T z_1,z_2 \rangle+\langle \nabla_x z_1,T z_2 \rangle +\langle  \nabla_{z_1} T  x,z_2 \rangle+\langle  \nabla_{z_1}  x,T z_2 \rangle&=-\langle z_1,z_2\rangle \theta(x)\\
	\frac{1}{2}\left(\langle [x,Tz_1],z_2\rangle+\langle [Tz_2,x],z_1\rangle-\langle [x,Tz_2],z_1\rangle \right) &=-\langle z_1,z_2\rangle \theta(x)
	\end{align*}
	\begin{equation}\label{aux1}
	\langle [x,Tz_1],z_2\rangle+2\langle [Tz_2,x],z_1\rangle =-2\langle z_1,z_2\rangle \theta(x).
	\end{equation}
	
	Secondly, we consider \eqref{ckyequationliealgebra} for $(z_2,z_1,x)$, then
	\begin{align*}
	\langle \left( \nabla_{z_2} T \right) z_1,x \rangle +\langle \left( \nabla_{z_1} T \right) z_2,x \rangle&=2\langle z_1,z_2\rangle \theta(x)\\
	\langle  \nabla_{z_2} T  z_1,x \rangle + \langle  \nabla_{z_2}  z_1,T x \rangle+\langle \nabla_{z_1} T  {z_2},x \rangle+\langle \nabla_{z_1}   z_2,Tx \rangle&=2\langle z_1,z_2\rangle \theta(x)\\
	\frac{1}{2}\left( \langle -[Tz_1,x],z_2\rangle-\langle [Tz_2,x],z_1\rangle \right) &=2\langle z_1,z_2\rangle \theta(x)
	\end{align*}
	\begin{equation}
	\label{aux2}
	\langle [x,Tz_1],z_2\rangle+\langle [x,Tz_2],z_1\rangle =4\langle z_1,z_2\rangle \theta(x).
	\end{equation}
	
From \eqref{aux1} and \eqref{aux2} we get \eqref{lema21-1}.
In the same way we can obtain the other conditions.
\end{proof}

Note that,  \eqref{lema21-1} reduces to
\begin{align}
\label{lema22simple}
\langle [Tz_1,x],z_2\rangle=\langle [Tz_2,x],z_1\rangle=0,
\end{align}
if $z_1$ and $z_2$ are orthogonal. On the other hand, when $z_1=z_2=z$ we get
\begin{align}
\label{lema21simple}
\langle [Tz,x],z\rangle=-2\| z\|^2 \theta(x).
\end{align}

Finally, we see that  \eqref{ky_equation} applied to any fixed $x,y,z\in \zz^\perp$ with $x,y,z\perp\xi$ is exactly the KY condition and \eqref{xixix} is also described in Theorem \ref{ad1}. 
Since condition \eqref{ckyequationliealgebra} in $(u,v,w)$ is symmetric in $\{u,v\}$, then  the cases in  Lemma \ref{lema21} cover all possibilities for $u,v,w\in\ggo$ to satisfy the CKY condition \eqref{ckyequationliealgebra}, according to the decomposition $\ggo=\zz\oplus\zz^\perp$; therefore we have a converse result:

\begin{prop}\label{enough}
	Let $(\ggo,\la \cdot,\cdot \ra)$ be a metric Lie algebra with  $\on{dim}\zz \geq 2$ and  $T$ a skew-symmetric endomorphism  on $\ggo$. If there exists a non-vanishing vector $\xi\in\zz^\perp$ such that $T\xi=0$, 
	and  \eqref{lema21-1}, \eqref{xyz1}, \eqref{xyz2}, \eqref{031}, \eqref{032}, \eqref{xixix} and \eqref{ky_equation} hold for $\theta(x)=\la \xi,x\ra$, then $T$ is a strict CKY tensor with associated $1$-form $\theta$ and dual vector $\xi$.	
\end{prop}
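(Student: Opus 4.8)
The plan is to show that the CKY condition \eqref{ckyequationliealgebra}, which must be checked for all triples $(u,v,w)\in\ggo^3$, reduces under the hypotheses to exactly the finite list of identities \eqref{lema21-1}, \eqref{xyz1}, \eqref{xyz2}, \eqref{031}, \eqref{032}, \eqref{xixix} and \eqref{ky_equation}. Since both sides of \eqref{ckyequationliealgebra} are multilinear in $(u,v,w)$ and the left-hand side is symmetric in $\{u,v\}$, it suffices to verify it for $u,v,w$ each chosen from a basis adapted to the orthogonal decomposition $\ggo=\zz\oplus\zz^\perp$; moreover, since $\xi\in\zz^\perp$ with $\xi\neq 0$, we may further refine to $\ggo=\zz\oplus\RR\xi\oplus(\zz^\perp\cap\xi^\perp)$ and take $u,v,w$ from $\zz$, $\RR\xi$, or $\zz^\perp\cap\xi^\perp$. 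This produces a finite number of ``types'' of triples (sorted by how many of $u,v,w$ lie in each summand), and the claim is that each type is governed by one of the listed equations.

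First I would record the trivial cases: if all three of $u,v,w$ lie in $\zz$, then $(\nabla_uT)v$ and $(\nabla_vT)u$ both lie in $\zz$ (or more precisely pair to zero against $w\in\zz$ by the Koszul-type computation already used in Lemma \ref{centro}), and the right-hand side vanishes because $\theta$ kills $\zz$ (Lemma \ref{centro}); so \eqref{ckyequationliealgebra} holds automatically. Next I would go type by type through the remaining cases: two elements in $\zz$ and one in $\zz^\perp$ gives precisely \eqref{lema21-1} (this is the content of the derivation in the proof of Lemma \ref{lema21}, run in reverse — equations \eqref{aux1} and \eqref{aux2} are equivalent to the two instances of \eqref{ckyequationliealgebra} for $(x,z_1,z_2)$ and $(z_2,z_1,x)$, and together they are equivalent to \eqref{lema21-1}); one element in $\zz$ and two in $\zz^\perp$ gives \eqref{xyz1} and \eqref{xyz2}; the case with $w=\xi$ or $v=\xi$ and the others in $\xi^\perp\cap\zz^\perp$ gives \eqref{031}, \eqref{032}, \eqref{xixix}; and finally all three in $\zz^\perp\cap\xi^\perp$ gives \eqref{ky_equation}. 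In carrying this out I would systematically rewrite each instance of \eqref{ckyequationliealgebra} using \eqref{koszul} to turn the covariant derivatives into brackets — exactly as done in the displayed computations in the proof of Lemma \ref{lema21} — together with the relations $T\xi=0$, $\nabla_\xi\xi=0$ (Theorem \ref{ad1}), $\nabla_zz$ paired with central vectors vanishing, and $\|\xi\|=1$, $\theta(\xi)=1$ (Remark \ref{rescaling}).

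The one point requiring genuine care — and the main obstacle — is a bookkeeping issue rather than a deep one: I must make sure the seven listed identities really do exhaust all symmetry-inequivalent types of triples, with no case silently omitted and no double counting. In particular I should check that the triples with exactly one entry equal to (a multiple of) $\xi$ and the triples with an entry in the general $\zz^\perp$ but orthogonal to $\xi$ are handled without overlap, and that the choice $u=v$ (the diagonal of the symmetric pair) does not generate an extra independent constraint — it is subsumed, since the listed equations were derived for general $x,y$ and specialize correctly (cf. the remarks \eqref{lema22simple}, \eqref{lema21simple} following Lemma \ref{lema21}). Since the paper has already essentially established the forward direction in Lemma \ref{lema21} by exactly this case analysis, the converse is obtained by observing that every step there was an equivalence (each manipulation via \eqref{koszul} and the hypotheses $T\xi=0$, $\theta|_\zz=0$ is reversible), so that the conjunction of the listed identities is not merely necessary but sufficient for \eqref{ckyequationliealgebra} to hold on all of $\ggo^3$. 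The fact that $\theta\neq 0$ (strictness) is automatic once $\xi\neq 0$, completing the claim that $T$ is a strict CKY tensor with associated $1$-form $\theta$ and dual vector $\xi$.
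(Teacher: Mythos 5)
Your proposal is correct and follows essentially the same route as the paper: the paper's proof also verifies the CKY condition \eqref{ckyequationliealgebra} type by type over the decomposition $\ggo=\zz\oplus\RR\xi\oplus(\zz^\perp\cap\xi^\perp)$, converting covariant derivatives to brackets via \eqref{koszul}, using $T\xi=0$, torsion-freeness and skew-symmetry of $\nabla_x$, and matching each type of triple to one of the seven listed identities exactly as you describe. Your observation that the two instances of \eqref{ckyequationliealgebra} underlying \eqref{aux1}--\eqref{aux2} form an invertible linear system, so the conjunction of the listed identities is equivalent to (not merely implied by) the CKY condition, is the correct justification for the reversibility the paper uses implicitly.
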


\begin{proof} 
We check the CKY condition \eqref{ckyequationliealgebra} for all possible choices of $x,y,z\in\ggo$. 
 Consider first the triples $(\xi,\xi,x)$ and $(\xi,x,\xi)$ with $x\in \zz^{\perp} \cap \xi^{\perp}$. Since \eqref{xixix} holds, the fact that $\nabla_x$ is skew-symmetric for all $x \in \ggo$ and that $T\xi=0$ implies that
	\begin{align*}
	0=&\la [\xi,Tx],\xi\ra=\la \nabla_{\xi}Tx-\nabla_{Tx}\xi,\xi\ra=\la \nabla_{\xi}Tx,\xi\ra+\la\nabla_{\xi}x,T\xi\ra=\la (\nabla_{\xi}T)x,\xi\ra=-\la (\nabla_{\xi}T)\xi,x\ra.
	\end{align*}
	Thus $\la (\nabla_{\xi}T)\xi,x\ra=0$ for all $x\in \zz^{\perp} \cap \xi^{\perp}$, and this is the CKY condition \eqref{ckyequationliealgebra} for $(\xi,\xi,x)$. Similarly, \eqref{xixix} implies that \eqref{ckyequationliealgebra} is satisfied for $(\xi,x,\xi)$.

	Consider now $(x,y,\xi)$ with $x,y\in  \zz^{\perp} \cap \xi^{\perp}$, since
	\eqref{031} holds, using that the torsion of the Levi-Civita connection vanishes, we get that
	\begin{align*}
	-2\langle x,y\rangle&=\langle \nabla_{\xi}Tx,y\rangle-\langle \nabla_{Tx}{\xi},y\rangle-\langle \nabla_ {Tx }y,\xi\rangle +\langle \nabla_{y}Tx,\xi\rangle+\langle \nabla_{y}\xi,Tx\rangle-\langle \nabla_{\xi}y,Tx\rangle +\\
	&+2\langle \nabla_{Ty}\xi,x\rangle -2\langle \nabla_{\xi}{Ty},x\rangle +2\langle \nabla_{Ty}x,\xi\rangle-2\langle \nabla_x {Ty},\xi\rangle\\
	&=2\langle \nabla_{\xi}Tx,y\rangle +2\langle \nabla_{\xi}x,{Ty}\rangle -2\langle \nabla_x {Ty},\xi\rangle=2\la \left( \nabla_{\xi}T\right) x,y\ra+2\langle \nabla_x \xi, {Ty}\rangle+2\la \nabla T\xi,y\ra\\
	&=2\la \left( \nabla_{\xi}T\right) x,y\ra+2\langle\left(  \nabla_xT\right)  \xi, {y}\rangle
	\end{align*}
	Thus $-\langle x,y\rangle =\la \left( \nabla_{\xi}T\right) x,y\ra+\langle\left(  \nabla_xT\right)  \xi, {y}\rangle$, which is the CKY condition \eqref{ckyequationliealgebra} for $(x,y,\xi)$. 
	In the same way, from \eqref{032} we get that 
	\[\la \left( \nabla_{x}T\right) y,\xi \ra+\langle\left(  \nabla_yT\right)  x, {\xi}\rangle=2 \la x,y\ra,\]
	hence the CKY conditions hold for $(x,\xi,y)$ with  $x,y\in  \zz^{\perp} \cap \xi^{\perp}$. 

	Similarly, using that $\nabla_{z}x=\nabla_x z$  for all $z\in \zz$, we can see that \eqref{xyz1} and \eqref{xyz2} imply that $\la\left(  \nabla_{x}T\right) y,z\ra+\la\left(  \nabla_{y}T\right) x,z\ra=0$ and $\la\left(  \nabla_{x}T\right) z,y\ra+\la\left(  \nabla_{z}T\right) x,y\ra=0$ for all $x,y\in \zz^{\perp}$. 
	Therefore, the CKY condition is satisfied for  all $x,y\in \zz^{\perp}$ and $z\in \zz$.

	Now for any $x,y,z\in \zz^{\perp} \cap \xi^{\perp}$ it follows from \eqref{ky_equation} that the CKY condition \eqref{ckyequationliealgebra} is verified.
	Next, as \eqref{lema21-1} holds, using that $\la \nabla_{z_1}z_2,x\ra=0$, and that $\nabla_{z_1}x=\nabla_x {z_1}$ for all $z_1,z_2\in \zz$ and $x\in \zz^{\perp}$, we can see that the CKY condition  is satisfied in this case. 
	
	Finally, the CKY condition \eqref{ckyequationliealgebra} for $z_1,z_2,z_3\in \zz$ is trivially verified.
	Since  \eqref{lema21-1}, \eqref{xyz1}, \eqref{xyz2}, \eqref{031}, \eqref{032},  \eqref{xixix} and \eqref{ky_equation} cover all the possibilities for $x,y,z\in \ggo$, we can conclude that $T$ is a CKY tensor.
\end{proof}

\section{$5$-dimensional Lie algebras with $\dim\zz>1$ admitting CKY $2$-forms }\label{LieAlgebras}

In this section we consider a $5$-dimensional non-abelian metric Lie algebra $(\ggo,\la \cdot,\cdot \ra)$  with $\dim\zz>1$, thus the center has dimension either $2$ or $3$. The next lemma shows that when $\dim \zz=3$, there are no strict CKY tensors.

\begin{lem}\label{centro3}
A $5$-dimensional metric Lie algebra $(\ggo,\la \cdot,\cdot\ra)$   with  $\dim \zz=3$ does not admit any strict CKY tensor.
\end{lem}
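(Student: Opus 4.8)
The plan is to derive a contradiction from the assumption that a strict CKY tensor $T$ exists on a $5$-dimensional metric Lie algebra $(\ggo,\ip)$ with $\dim\zz = 3$. Since $\dim\zz = 3$, the orthogonal complement $\zz^\perp$ is $2$-dimensional, and by Lemma \ref{centro} the associated vector $\xi$ lies in $\zz^\perp$ and satisfies $\|\xi\| = 1$ (after rescaling, by Remark \ref{rescaling}). Moreover, by Proposition \ref{xi_unique} and Theorem \ref{ad1}, $T\xi = 0$ and $T|_{\xi^\perp}$ is a linear isomorphism; in particular $T$ restricted to $\zz^\perp \cap \xi^\perp$ is a non-zero skew-symmetric endomorphism of a $1$-dimensional space, which is impossible. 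Wait — more carefully: $\xi^\perp$ is $4$-dimensional and contains $\zz$ ($3$-dimensional) plus the $1$-dimensional space $\zz^\perp \cap \xi^\perp$. So the point is not quite immediate, and the contradiction must come from the structural equations in Lemma \ref{lema21}.

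First I would pick an orthonormal basis adapted to the decomposition: let $\xi, x$ be an orthonormal basis of $\zz^\perp$ (so $x \in \zz^\perp \cap \xi^\perp$), and let $z_1, z_2, z_3$ be an orthonormal basis of $\zz$. By Lemma \ref{zperpgg}, $T\zz \not\subseteq \zz$, so some $Tz_i$ has a non-zero component along $\zz^\perp = \Span\{\xi, x\}$. Now I would exploit equation \eqref{lema21simple}: for each central unit vector $z$, $\la [Tz, x], z\ra = -2\theta(x)$, and by \eqref{lema22simple}, $\la [Tz_i, x], z_j\ra = 0$ for $i \neq j$. The key additional ingredient is Lemma \ref{xiperpgg}, which gives $\la \xi, \ggo'\ra \neq 0$, i.e. $\theta$ does not vanish on the derived algebra; combined with $\xi \perp \zz$ this forces $\theta(x) \neq 0$ (since $\theta$ vanishes on $\zz$ and $\ggo = \zz \oplus \RR\xi \oplus \RR x$, while $\theta(\xi)=1 \neq 0$ always — so actually $\theta(x)$ could be zero; the relevant quantities to track are $\theta(\xi)=1$ and $\theta(x)$). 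The heart of the argument is to combine the bracket relations coming from \eqref{lema21-1}, \eqref{xyz1}, \eqref{xyz2} applied to the few available vectors $\xi, x, z_1, z_2, z_3$, together with the Jacobi identity and the Koszul formula \eqref{koszul}, to show that the resulting bracket structure is inconsistent with $\dim\zz = 3$ — essentially, the equations force enough non-trivial brackets landing outside $\zz$ that $\zz$ cannot actually be $3$-dimensional, or else they force $T$ to fail to be an isomorphism on $\xi^\perp$.

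Concretely, the main computation: write $Tz_i = \alpha_i \xi + \beta_i x + (\text{central part})$. Skew-symmetry of $T$ relates the $\alpha_i, \beta_i$ to the $\zz^\perp$-to-$\zz$ components of $T\xi$ and $Tx$; but $T\xi = 0$ forces $\alpha_i = 0$ for all $i$, so $Tz_i = \beta_i x + (\text{central})$. Then \eqref{lema21simple} reads $\beta_i \la [x,x], z_i\ra + \dots = \beta_i \cdot 0 + \dots$; since $[x,x]=0$ the only surviving term in $\la [Tz_i,x],z_i\ra$ comes from the central part of $Tz_i$ bracketed with $x$, giving $\la [Tz_i, x], z_i \ra = -2\theta(x)$ for each $i$. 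Since $\dim(\zz^\perp\cap\xi^\perp)=1$ and $\beta_1 x, \beta_2 x, \beta_3 x$ span at most a line, while $T|_{\xi^\perp}$ must be injective on the $4$-dimensional space $\xi^\perp = \zz \oplus \RR x$, the vectors $Tz_1, Tz_2, Tz_3, Tx$ must be linearly independent; but $Tx \in \xi^\perp$ (as $T$ preserves $\xi^\perp$, since $\la Tx,\xi\ra = -\la x, T\xi\ra = 0$), and $Tz_i = \beta_i x + c_i$ with $c_i \in \zz$, $Tx = \gamma x + c_0$ with $c_0 \in \zz$. Independence of these four vectors in the $4$-space $\zz \oplus \RR x$ forces the $3\times 3$ "central-part" matrix $(c_1, c_2, c_3)$ restricted to a complement to have rank $\geq 2$, which then forces at least two of $Tz_i$ to have non-central part zero... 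I expect this counting, pushed through with \eqref{xyz1}–\eqref{xyz2} and Jacobi, to collapse to $\theta(x) = 0$, contradicting a consequence of Lemma \ref{xiperpgg} that $\theta$ cannot vanish on $\ggo'$ while $\ggo' \cap (\RR\xi)^\perp$-considerations force $\theta(x) \neq 0$. The hardest part will be this final bookkeeping: making precise exactly which linear combination of \eqref{lema21-1}, \eqref{xyz1}, \eqref{xyz2} and the Jacobi identity yields the clean contradiction, rather than a tangle of bracket coefficients — and ensuring no case (e.g. $Tx$ itself lying in $\zz$) is overlooked.
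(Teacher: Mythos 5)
Your setup --- the orthonormal basis $\{\xi,z_1,z_2,z_3,x\}$ with $\zz^\perp=\Span\{\xi,x\}$, the observation that $T\xi=0$ and skew-symmetry force $Tz_i=\beta_i x+(\text{central part})$ and $Tx\in\zz$, and the identification of \eqref{lema21simple} and \eqref{lema22simple} as the relevant equations --- matches the paper's. But the argument as sketched does not close, for two reasons. First, you apply \eqref{lema21simple} with the $\zz^\perp$-vector equal to $x$. Since $[Tz_i,x]=\beta_i[x,x]+[c_i,x]=0$ (central parts bracket to zero, so your claim that ``the only surviving term comes from the central part of $Tz_i$ bracketed with $x$'' has it backwards), this only yields $0=-2\theta(x)$, which is no contradiction at all: $\theta(x)=\la\xi,x\ra=0$ holds automatically because $x\perp\xi$. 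Your proposed endgame --- collapse everything to $\theta(x)=0$ and contradict a consequence of Lemma \ref{xiperpgg} --- therefore cannot work; that lemma only asserts $\la\xi,\ggo'\ra\neq0$, i.e.\ that the single non-trivial bracket $[x,\xi]$ has non-zero $\xi$-component, and says nothing about $\theta(x)$. Second, the dimension count via injectivity of $T|_{\xi^\perp}$ is also insufficient on its own: an invertible skew-symmetric operator on the $4$-dimensional space $\zz\oplus\RR x$ sending $x$ into $\zz$ certainly exists, so no contradiction comes from linear algebra alone.

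The missing idea is to apply \eqref{lema21simple} with the $\zz^\perp$-vector equal to $\xi$ rather than $x$, because $\theta(\xi)=1$ is guaranteed non-zero. Writing $[x,\xi]=a_1x+a_2\xi+a_3z_1+a_4z_2+a_5z_3$ for the only non-trivial bracket, one gets $\la[Tz_i,\xi],z_i\ra=\beta_i\la[x,\xi],z_i\ra=\beta_i a_{i+2}=-2$ for each $i$, so in particular $\beta_1,\beta_2$ and $a_3,a_4$ are all non-zero; then \eqref{lema22simple} applied to $(z_1,z_2,\xi)$ gives $\beta_1 a_4=0$, a contradiction. This is exactly the paper's proof, and it is two lines once the correct vector is substituted; I would encourage you to redo the computation with this substitution.
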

\begin{proof}
 It follows from Lemma \ref{centro} that $\zz\perp \xi$. 
	Let $z_1,z_2,z_3$ be an orthonormal basis of $\zz$ and let $x\in \ggo$ be a unitary vector such that $x\perp \zz$ and $\la x,\xi\ra=0$. Then the only non-zero Lie bracket is given by
	$$[x,\xi]=a_1x+a_2\xi+a_3z_1+a_4z_2+a_5z_3\neq 0,$$
for some $a_1,a_2,a_3,a_4,a_5\in\mathbb R$. Suppose $T$ is a strict CKY tensor on $\ggo$ given by the following matrix in the orthonormal basis $\{\xi,z_1,z_2,z_3,x\}$
	\begin{align}\label{T}
	T=\left[ \begin{matrix}0&0&0&0&0\\
	0&0&-a&-b&-c\\0&a&0&-d&-e\\0&b&d&0&-f\\0&c&e&f&0\end{matrix}\right] 
	\end{align}
	Using \eqref{lema21simple}, that is, $\la [Tz_i,\xi],z_i\ra=-2$ for $i=1,2$,  we get $ca_3=ea_4=-2$ and therefore, $c,e,a_3,a_4\neq 0$. 
	On the other hand, from  \eqref{lema22simple} for $(z_1,z_2,\xi)$ we obtain $ca_4=0$, which is a contradiction.
	\end{proof}

\

Therefore, we will focus on the case $\dim\zz=2$. Let $(\ggo, \la \cdot , \cdot \ra)$ be a $5$-dimensional metric Lie algebra with center of dimension $2$, and $T$ a strict CKY tensor with associated $1$-form $\theta$ and dual unitary vector $\x$.
We can decompose $\ggo=\zz \oplus \zz^\perp$, and let $\{z_1,z_2\}$ be an orthonormal basis of the center.
It follows from Lemma \ref{centro} that $\xi$ is orthogonal to $\zz$, we can therefore complete to an orthonormal basis  $\{\xi, x,y\}$ of $\zz^\perp$. 
Then $T$ can be written  as in \eqref{T} in the orthonormal basis $\{\xi,z_1,z_2,x,y\}$ of $\ggo$, 
for some $a,b,c,d,e,f\in\mathbb R$, such that $T | _{\xi^ {\perp}}$ is an isomorphism, that is, $af+cd-be\neq 0$.
The Lie brackets on $\ggo$ can be described as follows
\begin{align}\label{corchetes}
[\xi,x]&=a_1z_1+a_2z_2+a_3x+a_4y+a_5\xi,\nonumber\\ 
[\xi,y]&=b_1z_1+b_2z_2+b_3x+b_4y+b_5\xi,\\
[x,y]&=c_1z_1+c_2z_2+c_3x+c_4y+c_5\xi,\nonumber
\end{align}
for some real numbers $a_i,b_i,c_i$, and this Lie algebra is denoted by $\ggo_{a_i,b_i,c_i}$.

Next, we reduce the possibilities for the structure constants $a_i,b_i,c_i$ and the parameters $a,b,c,d,e,f$ of $T$, in order to satisfy the CKY condition \eqref{ckyequationliealgebra}. 
Note first that by doing a rotation in the center, one can always choose $a_2=0$ in \eqref{corchetes}. In the next lemma a further reduction is shown.

\begin{lem}\label{LBconditions}
	Let $(\ggo,\ip)$ be a $5$-dimensional metric Lie algebra with center of dimension $2$. Then $(\ggo,\ip)$ admits a strict CKY  tensor if and only if there exists an orthonormal basis $\{\xi,z_1,z_2,x,y\}$ 
	such that:
	\begin{align}\label{super-reduced-Lie_brackets}
	[\xi,x]&=rz_1+a_4y,&
	[\xi,y]&=rz_2-a_4x,&
	[x,y]&=s\xi,
\end{align}
for some $r>0$, $s>0$, and  $a_4=\dfrac{s^2-r^2}{s}$. 
	Moreover, the CKY	$2$-form is given by  
\begin{equation}\label{Tisometrico}
		\omega=\frac{2(s^2+2r^2)}{r^2s}z^1\wedge z^2 + \frac{2}{r}(z^1\wedge x^* +z^2\wedge y^*) + \frac{4}{s}x^*\wedge y^*,
	\end{equation}
where $\{\theta,z^1,z^2,x^*,y^*\}$ is the metric dual basis of $\{\xi,z_1,z_2,x,y\}$. 
\end{lem}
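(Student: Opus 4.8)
The strategy is to start from the reduced Lie brackets in \eqref{corchetes} (with $a_2=0$ already arranged by a rotation in the center) together with the matrix form \eqref{T} of $T$, and systematically impose the list of scalar identities collected in Lemma \ref{lema21}, namely \eqref{lema21-1}, \eqref{xyz1}, \eqref{xyz2}, \eqref{031}, \eqref{032}, \eqref{xixix}. By Proposition \ref{enough} these are not only necessary but also sufficient, so once we have extracted the constraints we will know exactly which metric Lie algebras occur and what $T$ must be. First I would unpack each bracket identity of Lemma \ref{lema21} for the specific basis $\{\xi,z_1,z_2,x,y\}$, obtaining polynomial relations among $\{a_i,b_i,c_i\}$ and $\{a,b,c,d,e,f\}$. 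The simplest consequences should be \eqref{lema21simple}, which forces $c\,a_1, c\,b_2$ (or their analogues) to be nonzero, giving invertibility-type information on the "central part" of $T$; and \eqref{lema22simple}, which kills several cross terms. In particular I expect \eqref{xixix} to force $a_5=b_5=0$ (no $\xi$-component in $[\xi,x],[\xi,y]$), consistent with $\nabla_\xi\xi=0$ from Theorem \ref{ad1}.

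\textbf{Key reduction steps.} After the first pass I would use the unimodular/trace constraints that come from $T\xi=0$ and skew-symmetry of each $\nabla_x$, plus Lemma \ref{xiperpgg} (which says $\langle\xi,\ggo'\rangle\neq0$, so $c_5\neq0$, i.e. the $x,y$ bracket genuinely produces a $\xi$-component), to pin down more coefficients. I anticipate that the analysis bifurcates according to which entries of $T$ in \eqref{T} vanish, but the constraints from \eqref{lema21-1} and \eqref{031}--\eqref{032} should collapse all branches to a single normal form: the off-diagonal $2\times 2$ block coupling $\{z_1,z_2\}$ to $\{x,y\}$ must be a nonzero multiple of the identity (matching coefficients $\tfrac{2}{r}$ in \eqref{Tisometrico}), the $z_1z_2$-entry and the $xy$-entry are then determined, and the brackets reduce to $[\xi,x]=rz_1+a_4y$, $[\xi,y]=rz_2-a_4x$, $[x,y]=s\xi$. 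The Jacobi identity among $\xi,x,y$ will impose one further relation; combined with the remaining CKY equation (the one mixing $a_4$ with $r,s$ through \eqref{031}) this should yield precisely $a_4=(s^2-r^2)/s$ and the explicit form of $\omega$. Finally I would record that $r$ and $s$ may be taken positive after possibly replacing basis vectors by their negatives and rescaling $T$ as in Remark \ref{rescaling}, and verify directly (or cite Proposition \ref{enough}) that \eqref{super-reduced-Lie_brackets} together with \eqref{Tisometrico} really satisfies all the conditions, establishing the converse.

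\textbf{Main obstacle.} The hard part will be the bookkeeping in the forward direction: the six parameters of $T$ and the (up to) fifteen structure constants give a large underdetermined-looking system, and the danger is missing a branch where some entry of $T$ vanishes. I would organize this by first exploiting \eqref{lema21simple}/\eqref{lema22simple} to force nonvanishing of the "diagonal coupling" entries $c$ and $e$ in \eqref{T} (exactly as in the proof of Lemma \ref{centro3}), which immediately rules out the degenerate branches, and only then turn to the more intricate identities \eqref{031} and \eqref{032}. A secondary subtlety is to make sure the claimed normalization $r,s>0$ is achievable without destroying the form \eqref{Tisometrico}; this is handled by tracking how sign changes $z_i\mapsto -z_i$, $x\mapsto -x$, $y\mapsto -y$ act on both the brackets and the matrix \eqref{T}. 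Once the normal form is reached, plugging it back into Lemma \ref{lema21} (equivalently, invoking Proposition \ref{enough}) to confirm sufficiency is a routine but necessary check.
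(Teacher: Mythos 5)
Your strategy is the same as the paper's: write the brackets as in \eqref{corchetes} and $T$ as in \eqref{T}, impose the identities of Lemma \ref{lema21}, normalize signs, and invoke Proposition \ref{enough} for the converse. The difficulty is that the entire content of the lemma is the reduction you defer to ``bookkeeping'': the claim that the large system collapses to the single normal form \eqref{super-reduced-Lie_brackets} with $a_4=(s^2-r^2)/s$ and $\omega$ as in \eqref{Tisometrico} is exactly what has to be computed, and the proposal does not compute it. The paper makes this tractable by packaging $T|_{\xi^\perp}$ and $\ad_\xi|_{\xi^\perp}$ into $2\times2$ blocks $M,N,Q$ and $A,B$ adapted to $\xi^\perp=\zz\oplus(\zz^\perp\cap\xi^\perp)$, so that \eqref{lema21-1} becomes $AQ=2\id$, \eqref{031} becomes $BN+(NB)^t-2(BN)^t=(fc_5-4)\id$, and \eqref{xyz1}--\eqref{xyz2} give $A^tA=-c_5(a_4-c_5)\id$ and $M=\tfrac{2}{r^2}B+\tfrac{3}{2}N$; these four matrix identities are what produce $b_1=0$, $b_2=\pm a_1$, $c_5f=4$, $a_4=(c_5^2-a_1^2)/c_5$ and the coefficients in \eqref{Tisometrico}. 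Without some such organization you have no guarantee of not missing a branch, which is precisely the danger you flag.

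Two of your anticipated steps are also off in detail. First, \eqref{lema21-1} does not force the coupling entries $c$ and $e$ of \eqref{T} to be nonzero ``exactly as in Lemma \ref{centro3}'': here $AQ=2\id$ forces $c=0$ (and eventually $d=0$), while it is the diagonal coupling entries $b$ and $e$ that must be nonzero. Second, the Jacobi identity contributes nothing: for brackets of the form \eqref{super-reduced-Lie_brackets} it is satisfied identically (all iterated brackets land on central elements or vanish), and the relation $a_4=(s^2-r^2)/s$ comes entirely from the CKY identities \eqref{xyz1}--\eqref{xyz2}, not from Jacobi. Neither point is fatal to the plan, but as written the proposal is a roadmap whose decisive computations remain to be done.
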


\begin{proof}
	Suppose $(\ggo,\ip)$ admits a strict CKY tensor $T$.
	We can decompose $\ggo=\zz\oplus\zz^{\perp}$ with $\xi \in \zz^{\perp}$, and $\xi^{\perp}=\zz\oplus(\zz^\perp\cap\xi^\perp)$.
	The operators  $T | _{\xi^ {\perp}}$ and $\ad_\xi| _{\xi^ {\perp}}$ can be written as block matrices, adapted to this decomposition, as follows
	$$T | _{\xi^ {\perp}}=\begin{pmatrix} M&-Q^t\\ Q&N \end{pmatrix}, 
	\; 
	\ad_{\xi} | _{\xi^ {\perp}}=\begin{pmatrix} 0&A\\ 0&B \end{pmatrix}$$
	where $M,Q,N$ are $2\times2$ matrices with $M$ and $N$  skew-symmetric. 

	We choose an orthonormal basis $\{\xi,z_1,z_2,x,y\}$ of $\ggo$ adapted to this decomposition such that the Lie brackets are as in \eqref{corchetes}, and $T$ is as in \eqref{T}, that is, 
	\begin{align*}
	M&=\begin{pmatrix} 0&-a\\ a&0 \end{pmatrix}, &
	N&=\begin{pmatrix} 0&-f\\ f&0 \end{pmatrix}, &
	Q&=\begin{pmatrix} b&d\\ c&e \end{pmatrix}, &
	A&=\begin{pmatrix} a_1&b_1\\ 0&b_2 \end{pmatrix}, &
	B&=\begin{pmatrix} a_3&b_3\\ a_4&b_4 \end{pmatrix}.\end{align*}
	Recall that the operator $T | _{\xi^ {\perp}}  : {\xi}^ {\perp} \to {\xi}^ {\perp}$ is a linear isomorphism according to Theorem \ref{ad1}.
	We analyze now the operator 
	$\ad_{\xi}T$ and the operator $T\ad_{\xi}$. Notice that Theorem \ref{ad1} implies that $\text{Im}(\ad_{\xi})\subset\xi^\perp$, that is, $a_5=b_5=0$.
	Replacing, $\xi,z_1,z_2$ in \eqref{lema21-1}, we obtain
		$$AQ=2\id,$$
		where $AQ$ is the projection of the operator $\ad_{\xi}T|_{\zz}$  to $\zz$, and $\id$ denotes the identity matrix. In particular, $a_1b_2\neq0$, $c=0$, and  $be\neq0$.
	It is easy to see that $[x,Tz_2]=e[x,y]$ and $[y,Tz_1]=-b[x,y]$. Now from \eqref{lema21-1} again we have that $[x,y]\in\zz^\perp$, and therefore $c_1=c_2=0$.

	On the other hand, from \eqref{xyz2} we have that $\langle [x,y],Tz\rangle=0$, or equivalently  $Q^t[x,y]=0$. Since $Q$ is invertible, we obtain that $c_3=c_4=0$, and therefore $[x,y]|_{\xi^\perp}=0$, that is $[x,y]=c_5\xi$ with $c_5\neq0$ according to Lemma \ref{xiperpgg}. 
	
	Notice that $[x,Ty]=[y,Tx]=0$ and $[x,Tx]=[y,Ty]=fc_5\xi$. Replacing this into \eqref{031} we obtain information about the projection of the operator $\ad_{\xi}T|_{\zz^\perp\cap\xi^\perp}$  to $\zz^\perp\cap\xi^\perp$, that is, 
	$$BN+(NB)^t-2(BN)^t=(fc_5-4)\id,$$ which is equivalent to  $b_4=a_3=0$, $b_3=-a_4$ and $c_5f=4$. In particular, we have that $NB=BN=-fa_4\id$.
	
	Next from \eqref{xyz1}  and \eqref{xyz2} we obtain that the operator $\ad_{\xi}T|_{\zz}$ projected to $\zz^\perp\cap\xi^\perp$ has to satisfy $$(AN)^t-2BQ+2W=0, \; -3AN+2(MA-Q^tB)=0, \; \text{with} \; W=\begin{pmatrix}
		0&-c_5\\c_5&0
	\end{pmatrix}Q.$$
	The first equality leads to,
    $A^tA=-c_5(a_4-c_5)\id$,
    thus $b_1=0$, $b_2=\pm a_1$ and $a_4=\frac{c_5^2-a_1^2}{c_5}$.
    Moreover, both cases $b_2=\pm a_1$ reduce to each other by changing the
    	signs of $z_2$ and $b_2$. 
    Note also that we can assume $c_5>0$ by changing the signs of $\xi,z_1,z_2$, and $a_1>0$ by changing the signs of $z_1,z_2$.
    Therefore,
    the structure constants \eqref{corchetes} reduce to \eqref{super-reduced-Lie_brackets} where $s=c_5$ and $r=a_1$. 
    Finally, the second equality shows that 
    $M=\frac{2}{r^2}B+ \frac{3}{2}N$, and then  
    the $2$-form associated to $T$ is given by \eqref{Tisometrico}.

	\medskip

	Now, assume there exists an orthonormal basis $\{\xi,z_1,z_2,x,y\}$ of $(\ggo,\ip)$ with Lie brackets as in \eqref{super-reduced-Lie_brackets}. It can be shown that the tensor associated to the CKY $2$-form $\omega$ given by \eqref{Tisometrico} satisfies the hypothesis of Proposition \ref{enough}. Therefore, this it is a strict CKY $2$-form for the metric Lie algebra $(\ggo,\ip)$.
\end{proof}

We denote the metric Lie algebras of Lemma \ref{LBconditions} with Lie bracket as in \eqref{super-reduced-Lie_brackets} by $(\ggo_{r,s},\ip)$ with $r,s>0$.
\begin{rem}
	As a consequence of Lemma \ref{LBconditions}, we have that 
    a $5$-dimensional metric Lie algebra with center of dimension $2$ admitting a strict CKY tensor has to be unimodular. Also note that a given metric Lie algebra $(\ggo_{r,s},\ip)$ admits only one strict CKY tensor, up to scaling.
\end{rem}

\begin{lem}\label{isometric}
Let $(\ggo_{r,s} , \la \cdot , \cdot \ra)$ be as above, then 
	$(\ggo_{r,s} , \la \cdot , \cdot \ra)$ are pairwise non-isometrically isomorphic for $r,s>0$.
\end{lem}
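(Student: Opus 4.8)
The plan is to extract isometric-isomorphism invariants from the very explicit structure \eqref{super-reduced-Lie_brackets} and show they determine the pair $(r,s)$. First I would compute the commutator ideal $\ggo_{r,s}'$ and the center $\zz$: from \eqref{super-reduced-Lie_brackets} we have $\ggo_{r,s}' = \Span\{z_1,z_2,\xi\}$ and $\zz = \Span\{z_1,z_2\}$, both independent of $r,s$, so these on their own are not enough. The useful invariant will instead come from the operator $\ad_\xi$ restricted to the $3$-dimensional space $\zz^\perp$ together with the metric. Since an isometric isomorphism $\Phi$ must send center to center and (by Lemma \ref{centro}, or rather by the uniqueness of $\xi$ up to sign from Remark \ref{rescaling} applied to the CKY tensor, which is itself canonical up to scaling by the remark after Lemma \ref{LBconditions}) must send $\xi$ to $\pm\xi'$, it follows that $\Phi$ preserves the orthogonal splitting $\ggo = \Span\{\xi\}\oplus\zz\oplus(\zz^\perp\cap\xi^\perp)$ and intertwines the respective $\ad_\xi$ operators. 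The key step is then to identify metric invariants of $\ad_\xi|_{\zz^\perp}$.

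Concretely, $\ad_\xi$ kills $\xi$ and maps $\zz^\perp\cap\xi^\perp=\Span\{x,y\}$ into $\ggo_{r,s}'$ by $\ad_\xi(x) = r z_1 + a_4 y$, $\ad_\xi(y) = r z_2 - a_4 x$, with $a_4 = (s^2-r^2)/s$. Two natural scalar invariants present themselves: the norm of the ``central part'' of $\ad_\xi$, governed by $r$, and the norm of the ``$\zz^\perp$-part'', governed by $a_4$. More precisely, decompose $\ad_\xi|_{\Span\{x,y\}} = P + R$ where $P$ is the $\zz$-valued component and $R$ the $(\zz^\perp\cap\xi^\perp)$-valued component; then under an isometric isomorphism preserving all three summands, the operator norm (equivalently the singular values) of $P$ and of $R$ are preserved. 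From the formulas, $P$ acts as $x\mapsto r z_1$, $y\mapsto r z_2$, an isometry scaled by $r$, so $\|P\| = r$ is an invariant; and $R$ acts as the rotation-by-$90^\circ$ scaled by $|a_4|$, so $\|R\| = |a_4| = |s^2-r^2|/s$ is an invariant. Thus $r$ and $|s^2-r^2|/s$ are both determined by the isometry class, hence so is $s$ (note $s>0$, and $|s^2-r^2|/s$ together with $r$ pins down $s^2 - r^2$ up to sign, and one resolves the sign, if necessary, by a further invariant such as whether $\ad_\xi x$ has nonzero $\zz$-component versus comparing $\|\ad_\xi x\|^2 = r^2 + a_4^2$ — in any case $(r,s)$ is recovered).

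The main obstacle I anticipate is justifying rigorously that an isometric isomorphism $\Phi: \ggo_{r,s}\to\ggo_{r',s'}$ must respect the fine decomposition $\Span\{\xi\}\oplus\zz\oplus(\zz^\perp\cap\xi^\perp)$, i.e. that $\Phi(\xi) = \pm\xi'$. That $\Phi$ preserves $\zz$ is automatic. For $\xi$ itself: since the CKY tensor is unique up to scaling on each $\ggo_{r,s}$ (remark after Lemma \ref{LBconditions}) and $\Phi$ carries CKY tensors to CKY tensors, $\Phi_* T = cT'$ for some $c\neq 0$; then $\Phi$ carries the associated vector $\xi$ to $c$ times the associated vector $\xi'$ (Remark \ref{rescaling}), and since both are unit vectors $c = \pm1$, giving $\Phi(\xi) = \pm\xi'$. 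Combined with $\Phi(\zz)=\zz'$ and orthogonality, this yields $\Phi(\zz^\perp\cap\xi^\perp) = (\zz')^\perp\cap(\xi')^\perp$, and the intertwining $\Phi\circ\ad_\xi = \pm\ad_{\xi'}\circ\Phi$ holds. After that the invariant computation above is routine linear algebra; the sign $\pm$ only rescales $a_4$ by $\pm1$ and $r$ is unaffected up to sign, so $r>0$ and $s>0$ are recovered uniquely, completing the proof.
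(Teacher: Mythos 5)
Your overall strategy --- show that $\Phi(\xi)=\pm\xi'$ and then read $(r,s)$ off metric invariants of $\ad_\xi$ --- is viable and not far from the paper's proof (which writes $\varphi$ as a block matrix in the orthonormal basis $\{\xi,z_1,z_2,x,y\}$ and compares brackets), but the pair of invariants you extract does not determine $(r,s)$, so the argument as written is incomplete. You recover $\|P\|=r$ and $\|R\|=|a_4|$ with $a_4=(s^2-r^2)/s$. For fixed $r>0$ and $k=|a_4|>0$, the equation $|s^2-r^2|/s=k$ has exactly two positive solutions $s_{\pm}=\tfrac12\bigl(\pm k+\sqrt{k^2+4r^2}\,\bigr)$, which satisfy $s_+s_-=r^2$; for instance $(r,s)=(1,2)$ and $(r,s)=(1,\tfrac12)$ give the same $r$ and the same $|a_4|=\tfrac32$. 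The tiebreakers you propose do not break the tie: $\|\ad_\xi x\|^2=r^2+a_4^2$ is again a function of $r$ and $|a_4|$ only, and ``$\ad_\xi x$ has nonzero $\zz$-component'' holds for every $r>0$. So the closing claim ``in any case $(r,s)$ is recovered'' is unjustified. The lemma is still true for such pairs, but for a reason your invariants do not see: the sign of $s^2-r^2$ determines the isomorphism type of the underlying Lie algebra ($\RR^2\times\mathfrak{su}(2)$ for $s>r$, $L_{5,9}$ for $s=r$, $\RR^2\times\mathfrak{sl}(2,\RR)$ for $s<r$). The cleanest repair is to replace $\|R\|$ by the invariant $s=\|[x,y]\|$, where $\{x,y\}$ is any orthonormal basis of $\zz^\perp\cap\xi^\perp$ and $\|\xi\|=1$; this is exactly what the paper extracts by comparing $\varphi([x,y])$ with $[\varphi(x),\varphi(y)]$, and together with $\|P\|=r$ it pins down $(r,s)$.

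Two smaller points. First, $\ggo_{r,s}'=\Span\{rz_1+a_4y,\ rz_2-a_4x,\ \xi\}$, which equals $\Span\{z_1,z_2,\xi\}$ only when $r=s$; the slip is harmless to your argument, but the correct computation actually hands you the fact that $\ggo'\cap\zz^\perp=\RR\xi$ (since $r>0$), which yields $\Phi(\xi)=\pm\xi'$ directly from the Lie structure and the metric. That is preferable to your route through the uniqueness of the strict CKY tensor up to scaling: the remark after Lemma \ref{LBconditions} asserts it, but its full justification in the paper (Theorem \ref{CKYspace}, via Proposition \ref{never_closed}) sits downstream of this very lemma, so leaning on it here is at least delicate. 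Once $\Phi(\xi)=\pm\xi'$ is secured, your intertwining argument for $r=r'$ is correct; only the recovery of $s$ needs the fix above.
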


\begin{proof}

 Let $\varphi: (\ggo_{r,s} , \la \cdot , \cdot \ra)\to(\ggo_{r',s'} , \la \cdot , \cdot \ra)$ be an arbitrary isometric isomorphism with $r,s,r',s'>0$.
  Since $\varphi$ is a Lie isomorphism, $\varphi$ preserves the center, i.e. $\varphi(\zz(\ggo_{r,s}))=\zz(\ggo_{r',s'})$. In addition,  using that $\varphi([x,y])=[\varphi(x),\varphi(y)]$ and the fact that $\varphi$ is an isometry, then
 $\varphi$ can be written in the orthonormal basis $\{\xi,z_1,z_2,x,y\}$ as $\varphi=\begin{pmatrix}
 	a&0&0&0&0\\
 	0&u&-bv&0&0\\
 	0&v&bu&0&0\\
 	0&0&0&g&-ch\\
 	0&0&0&h&cg
 \end{pmatrix}$,
for some $a,b,c,g,h,u,v\in\mathbb R$ with $a^2=b^2=c^2=1$, and $u^2+v^2=g^2+h^2=1$. From
 $\varphi([x,y])=[\varphi(x),\varphi(y)]$  again  one can show that $a=c$ and $s=s'$, and from $\varphi([\xi,x])=[\varphi(\xi),\varphi(x)]$ we obtain that $r=r'$. Moreover, using that $\varphi([\xi,y])=[\varphi(\xi),\varphi(y)]$, and once again $\varphi([\xi,x])=[\varphi(\xi),\varphi(x)]$ we have that $a=b$ and
 $\varphi$ has the following form
	\[ \varphi=
	\begin{pmatrix}
		a&0&0&0&0\\
		0&a\cos t&-\sin t&0&0\\
		0&a\sin t&\cos t&0&0\\
		0&0&0&\cos t&-a\sin t\\
		0&0&0&\sin t&a\cos t
	\end{pmatrix}\]
with $a^2=1$.
\end{proof}

\subsection{Classification}

Now, in order to determine exactly which metric Lie algebra we have obtained, we consider two cases:

\subsubsection{Case $r=s$:} 

Consider the metric Lie algebra $(\ggo_{r,s},\ip)$ with $s=r$, then $a_4=0$. By making a change of basis $\{E=r\xi, X=x, Y=y, Z_1=-r^2z_1, Z_2=-r^2z_2\}$ we have the following Lie brackets
\begin{align}\label{LieBracket-L59}
	[X,E]&=Z_1&
	[Y,E]&=Z_2 & 
	[X,Y]&=E.
\end{align}
This Lie algebra $\ggo_{r,r}$ is isomorphic to the $3$-step nilpotent Lie algebra $L_{5,9}$ (see \cite{deGraaf}). 
Now we can rewrite this metric $\ip$ in the canonical basis $\{E,Z_1,Z_2,X,Y\}$, and it has the following matrix representation denoted by $\ip_{r}$:
\[\ip_{r}=\begin{pmatrix}
	r^2&0&0&0&0\\
	0&r^4&0&0&0\\
	0&0&r^4&0&0\\
	0&0&0&1&0\\
	0&0&0&0&1
\end{pmatrix}.\]
Therefore, $(\ggo_{r,r},\ip)$ is isometrically isomorphic to $(L_{5,9},\ip_r)$. Moreover, it can be shown by a direct computation that $(L_{5,9},\ip_r)$ is isometrically isomorphic to $(L_{5,9},\ip_{r'})$ if and only if $r=r'$ (see also Lemma \ref{isometric}). 
Therefore, there is a one-parameter family of metrics on $L_{5,9}$ admitting a CKY tensor.
For a complete description of the isometry classes of all metrics on this Lie algebra we refer to \cite{FN}.

\subsubsection{Case $r\neq s$:}
Consider the metric Lie algebras $(\ggo_{r,s},\ip)$ with $r\neq s$, that is, $a_4\neq0$. By making  a change of basis $\{ \xi, z_1,z_2, \overline{X}=rz_1+a_4y, \overline{Y}=rz_2-a_4x\}$ we get the following Lie brackets
\begin{align*}
	[\xi,\overline{X}]&=a_4\overline{Y}&
	[\overline{Y},\xi]&=a_4\overline{X} & 
	[\overline{X},\overline{Y}]&=a_4^2s\xi.
\end{align*}
Suppose now that $a_4>0$, that is $s>r$,
then we can change the basis as follows: 
\begin{align*}
	X:&=\frac{1}{\sqrt{a_4^3s}}\overline{X}, & Y:&=-\frac{1}{\sqrt{a_4^3s}}\overline{Y}, & E:&=-\frac{1}{a_4}\xi, & Z_1:&=z_1,& Z_2:&=z_2.
\end{align*} 
It is easy to see that $\ggo$ is isomorphic to $\RR^2 \times \mathfrak{su}(2)$ with Lie brackets
\begin{align*}
	[E,X]&=Y,&
	[X,Y]&=E,&
	[Y,E]&=X.
\end{align*}
In this basis $\{E,Z_1,Z_2,X,Y\}$ the metric $\ip$ has the following form denoted by $\ip_{r,s}$:

$${\ip_{r,s}}=
\begin{pmatrix}
	\frac{1}{a_4^2}&0&0&0&0\\
	0&1&0&\frac{r}{\sqrt{a_4^3s}}&0\\
	0&0&1&0&-\frac{r}{\sqrt{a_4^3s}}\\
	0&\frac{r}{\sqrt{a_4^3s}}&0&\frac{r^2+a_4^2}{a_4^3s}&0\\
	0&0&-\frac{r}{\sqrt{a_4^3s}}&0&\frac{r^2+a_4^2}{a_4^3s}
\end{pmatrix},$$
with $a_4=\frac{s^2-r^2}{s}$.  Moreover, it follows from Lemma \ref{isometric} that these metrics $\ip_{r,s}$ are pairwise non-isometric. Indeed, if $(\RR^2 \times \mathfrak{su}(2),\ip_{r,s})\simeq(\RR^2 \times \mathfrak{su}(2),\ip_{r',s'})$ then $(\ggo_{r,s},\ip)\simeq(\ggo_{r',s'},\ip)$ and therefore $r=r'$ and $s=s'$.

\medskip

Similarly, when $a_4<0$, that is $s<r$, 
we can set the basis $\{E,Z_1,Z_2,X,Y\}$ with
\begin{align*}
	X:&=\frac{1}{\sqrt{|a_4|^3s}}\,\overline{X}, & Y:&=\frac{1}{\sqrt{|a_4|^3s}}\,\overline{Y}, & E&=\frac{1}{a_4}\xi, & Z_1&:=z_1,& Z_2&:=z_2.
\end{align*}
It can be seen that $\ggo$ is isomorphic to $\RR^2 \times\mathfrak{sl}(2,\RR)$ with Lie brackets
\begin{align*}
	[E,X]&=Y,&
	[X,Y]&=-E,&
	[Y,E]&=X.
\end{align*}
The metric $\ip$ in this basis $\{E,Z_1,Z_2,X,Y\}$ has the following form
$${\ip_{r,s}}=
\begin{pmatrix}
	\frac{1}{a_4^2}&0&0&0&0\\
	0&1&0&\frac{r}{\sqrt{|a_4|^3s}}&0\\
	0&0&1&0&\frac{r}{\sqrt{|a_4|^3s}}\\
	0&\frac{r}{\sqrt{|a_4|^3s}}&0&\frac{r^2+a_4^2}{|a_4|^3 s}&0\\
	0&0&\frac{r}{\sqrt{|a_4|^3s}}&0&\frac{r^2+a_4^2}{|a_4|^3s}
\end{pmatrix},$$
with $a_4=\frac{s^2-r^2}{s}$. Similar to the case above, it follows from Lemma \ref{isometric} that these metrics $\ip_{r,s}$ are pairwise non-isometric.

\begin{rem}
	Note that the factors in each Lie algebra $\RR^2 \times\mathfrak{su}(2)$ and $\RR^2 \times\mathfrak{sl}(2,\RR)$ are not orthogonal with respect to  $\ip_{r,s}$.
\end{rem}

We summarize in the following result the classification of all  $5$-dimensional metric Lie algebras with center of dimension greater than one admitting strict CKY tensors, up to isometric isomorphism and scaling.

\begin{thm}\label{metricLA}
	Let $(\ggo,\ip)$ be a $5$-dimensional metric Lie algebra with $\dim\zz>1$. If $(\ggo,\ip)$ admits a strict CKY tensor, then $(\ggo,\ip)$ is isometrically isomorphic to one and only one of the following:
	$(L_{5,9},\ip_r)$, $(\RR^2 \times\mathfrak{su}(2),\ip_{r,s})$, $(\RR^2 \times\mathfrak{sl}(2,\RR),\ip_{r,s})$
	for $r,s>0$, $r\neq s$.
	Moreover, the CKY tensor is uniquely determined by the metric, up to scaling and it 
	is given in Table \ref{TensoresBaseCanonica}.
\end{thm}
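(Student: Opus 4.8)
The plan is to assemble Theorem \ref{metricLA} essentially as a corollary of the work already done in Lemmas \ref{centro3}, \ref{LBconditions}, \ref{isometric}, together with the explicit change-of-basis computations carried out in Section \ref{LieAlgebras}. First I would invoke Lemma \ref{centro3} to dispose of the case $\dim\zz=3$: there are no strict CKY tensors, so we may assume $\dim\zz=2$. Then Lemma \ref{LBconditions} tells us that, after choosing a suitable orthonormal basis, the Lie bracket of $(\ggo,\ip)$ must be of the form \eqref{super-reduced-Lie_brackets} for some $r,s>0$ with $a_4=(s^2-r^2)/s$; in other words $(\ggo,\ip)$ is one of the metric Lie algebras $(\ggo_{r,s},\ip)$. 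Conversely, every such $(\ggo_{r,s},\ip)$ does admit a strict CKY tensor (again by Lemma \ref{LBconditions}), and this tensor is unique up to scaling by the Remark following that lemma. So the classification reduces to identifying each $(\ggo_{r,s},\ip)$ up to isometric isomorphism.

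Next I would split according to the sign of $a_4$, i.e. according to whether $r=s$, $r<s$, or $r>s$, exactly as in the three subsections above. In the case $r=s$ we have $a_4=0$, and the explicit change of basis $\{E=r\xi,\,X=x,\,Y=y,\,Z_1=-r^2z_1,\,Z_2=-r^2z_2\}$ brings the brackets to \eqref{LieBracket-L59}, exhibiting $\ggo_{r,r}$ as the nilpotent Lie algebra $L_{5,9}$ with metric $\ip_r$. In the case $s>r$ (so $a_4>0$) the change of basis displayed in that subsection identifies $\ggo_{r,s}$ with $\RR^2\times\mathfrak{su}(2)$ carrying the metric $\ip_{r,s}$, and in the case $s<r$ (so $a_4<0$) the analogous change of basis identifies it with $\RR^2\times\mathfrak{sl}(2,\RR)$ with $\ip_{r,s}$. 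Since these three underlying Lie algebras — $L_{5,9}$ (nilpotent), $\RR^2\times\mathfrak{su}(2)$ (with a compact simple factor), and $\RR^2\times\mathfrak{sl}(2,\RR)$ (with a non-compact simple factor) — are pairwise non-isomorphic as abstract Lie algebras, no isometric isomorphism can occur across the three families.

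It then remains to show that within each family the parameters are genuine invariants. This is precisely the content of Lemma \ref{isometric}: the metric Lie algebras $(\ggo_{r,s},\ip)$ are pairwise non-isometrically isomorphic for $r,s>0$. Concretely, if $(L_{5,9},\ip_r)\simeq(L_{5,9},\ip_{r'})$ or $(\RR^2\times\mathfrak{su}(2),\ip_{r,s})\simeq(\RR^2\times\mathfrak{su}(2),\ip_{r',s'})$ or $(\RR^2\times\mathfrak{sl}(2,\RR),\ip_{r,s})\simeq(\RR^2\times\mathfrak{sl}(2,\RR),\ip_{r',s'})$, then pulling the isometric isomorphism back through the explicit changes of basis above yields an isometric isomorphism $(\ggo_{r,s},\ip)\simeq(\ggo_{r',s'},\ip)$, whence $r=r'$ and $s=s'$ by Lemma \ref{isometric}. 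Finally, the CKY tensor is the one determined up to scaling in Lemma \ref{LBconditions}, whose expression \eqref{Tisometrico} is transported through the same changes of basis into the canonical basis $\{E,Z_1,Z_2,X,Y\}$; recording the result in each of the three cases gives the entries of Table \ref{TensoresBaseCanonica}.

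The only genuinely non-routine ingredient here is Lemma \ref{LBconditions}, which is already proved; given it, the main remaining obstacle is purely bookkeeping — keeping track of which change of basis is orthogonal, or merely linear, so that the claim of \emph{isometric} (not just algebraic) isomorphism is justified, and then carrying \eqref{Tisometrico} faithfully through those changes of basis to fill in Table \ref{TensoresBaseCanonica}. I would also note explicitly, as in the Remark after Lemma \ref{LBconditions}, that all three resulting Lie algebras are unimodular, which is consistent with the general theory and provides a useful sanity check on the computations.
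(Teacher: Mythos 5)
Your proposal is correct and follows essentially the same route as the paper: the theorem is assembled exactly as you describe, from Lemma \ref{centro3} (excluding $\dim\zz=3$), Lemma \ref{LBconditions} (the normal form $(\ggo_{r,s},\ip)$ and uniqueness of the CKY tensor up to scaling), the explicit changes of basis in the two subsections identifying $\ggo_{r,r}$ with $L_{5,9}$ and $\ggo_{r,s}$ ($r\neq s$) with $\RR^2\times\mathfrak{su}(2)$ or $\RR^2\times\mathfrak{sl}(2,\RR)$, and Lemma \ref{isometric} to separate parameters within each family. Your added remarks on cross-family non-isomorphy and on transporting \eqref{Tisometrico} through the (non-orthogonal) changes of basis are exactly the bookkeeping the paper performs implicitly.
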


\begin{table}[h!]
	\begin{tabular}{ c c c }
		\hline \hline
		{\scriptsize Metric Lie algebra }  & {\scriptsize strict CKY $2$-form} & {\scriptsize parameters}\\ \hline \hline 
		\begin{tiny}
			$\begin{matrix} (L_{5,9},\ip_r)\\ 
				&\\
				[X,E]=Z_1\\
				[Y,E]=Z_2\\
				[X,Y]=E\end{matrix}$
		\end{tiny} 
& 
		
			$6r^3z^1\wedge z^2 - 2r(z^1\wedge x^* + z^2\wedge y^*) + \frac{4}{r}x^*\wedge y^*$
& $r>0$
		\\ \hline
		\begin{tiny}			
			$\begin{matrix}
				(\RR^2 \times\mathfrak{su}(2),\ip_{r,s})\\
				&\\
				[E,X]=Y\\
				[X,Y]=E\\ 
				[Y,E]=X\end{matrix}$
		\end{tiny}
		&
		
			$\frac{2(s^2+2r^2)}{r^2s} z^1\wedge z^2 -  \frac{6r}{(s^2-r^2)^{3/2}}(z^1\wedge y^* + z^2\wedge x^*)  - \frac{2(s^2+2r^2)}{s(s^2-r^2)^3}x^*\wedge y^*$			
		&  $s>r>0$
		
	\\ \hline
	 
		\begin{tiny}	
			$\begin{matrix}
				(\RR^2 \times\mathfrak{sl}(2,\RR),\ip_{r,s})\\
				&\\
				[E,X]=Y\\
				[X,Y]=-E\\ 
				[Y,E]=X\end{matrix}$
		\end{tiny}
		
		& 
			 $\frac{2(s^2+2r^2)}{r^2s} z^1\wedge z^2 +  \frac{6r}{(r^2-s^2)^{3/2}}(z^1\wedge y^* - z^2\wedge x^*)  - \frac{2(s^2+2r^2)}{s(s^2-r^2)^3}x^*\wedge y^*$ 
						
		&
		$r>s>0$ \\
	 \hline
	\end{tabular}
	\caption{Notation: $\{e,z^1,z^2,x^*,y^*\}$ is the metric dual basis of $\{E,Z_1,Z_2,X,Y\}$.}
	\label{TensoresBaseCanonica}
\end{table}

\begin{rem}
	These $5$-dimensional metric Lie algebras exhibited in Theorem \ref{metricLA} represent the first explicit examples of Lie algebras carrying strict CKY $2$-forms and not admitting any Sasakian structure.
\end{rem}

\

\subsection{Compact examples}

The CKY tensors on the metric Lie algebras determined in Table \ref{TensoresBaseCanonica} can be used to produce examples of compact manifolds admitting CKY tensors.
Recall that any left invariant Riemannian metric on a Lie group $G$ defines a Riemannian metric on any compact quotient $\Gamma\backslash G$, such that the natural projection $p : G \to \Gamma\backslash  G$ is a locally isometric covering. 
Therefore, any CKY tensor on a metric Lie algebra $(\ggo,\ip)$
induces a CKY tensor on any compact quotient $\Gamma\backslash G$, where $\Gamma$ is a discrete subgroup of $G$.

Consider  
the metric Lie algebra $(L_{5,9},  \ip_r)$ with the CKY tensor given in Table \ref{TensoresBaseCanonica}.
This is an example of a $3$-step nilpotent non-Sasakian Lie algebra admitting a CKY tensor. The associated simply connected Lie group admits lattices (\cite{Malcev}). Moreover, any lattice in the associated simply connected Lie group determines a nilmanifold admitting a CKY tensor induced by $T$.
In \cite{CMdNMY} it is shown that the only nilmanifolds admitting a Sasakian structure (not necessarly invariant) are quotients of the Heisenberg Lie group. Therefore, any nilmanifold obtained as a quotient of the simply connected Lie group with Lie algebra $L_{5,9}$ does not admit any Sasakian structure.

For the second metric Lie algebra $(\RR^2 \times\mathfrak{su}(2),\ip_{r,s})$, any lattice in the associated simply connected Lie group $\RR^2 \times SU(2)$ determines a compact manifold admitting a CKY tensor induced by $T$. In particular, the product $\mathcal S^3\times \mathcal T^2$ of the $3$-dimensional sphere with the $2$-dimensional torus equipped with the metric induced by $\ip_{r,s}$  admits a CKY tensor; in contrast with the nilmanifold in the first example, this Lie group does admit Sasakian structures (which are non-invariant), according to \cite{BTF}.

Finally, the metric Lie algebra $(\RR^2 \times\mathfrak{sl}(2,\RR),\ip_{r,s})$ is the third example of a non-Sasakian Lie algebra admitting a CKY tensor. 
The Lie group $SL(2,\RR)\times\RR^2$ admits lattices (see \cite{Mos75}).
Moreover, any lattice determines a compact manifold admitting a CKY tensor induced by $T$. 
We do not known in this case if these compact manifolds admit non-invariant Sasakian structures.


\section{The vector space of CKY 2-forms}



The space of CKY 2-forms on a given metric Lie algebra $(\ggo,\ip) $ is a vector space (see \cite{Stepanov}, \cite{Semmelmann}), we denote it by $\mathcal{CKY}^2(\ggo,\ip)$. It contains two distinguished vector subspaces: the vector space of KY $2$-forms   and the vector space of $*$-KY $2$-forms  denoted by $\mathcal{KY}^2(\ggo,\ip)$ and $\mathcal{*KY}^2(\ggo,\ip)$ respectively. 
It is known that the sum of any $*$-KY $2$-form and any KY $2$-form is a CKY $2$-form, but the converse does not hold in general, see \cite{Kashiwada, Tachibana}. In this section we investigate these subspaces on the metric Lie algebras in Table \ref{tabla1}, and then any metric Lie algebra in Table \ref{TensoresBaseCanonica}. 

\smallskip

Regarding the metric Lie algebras from Section 3, we can characterize the CKY $2$-forms classified in Theorem \ref{classificacion_1} further. Indeed, it is clear that those CKY $2$-forms are not Killing-Yano; however, in the next proposition, we show that they are always closed.

\begin{prop}\label{closed}
	Let $(\ggo,\ip)$ be a $5$-dimensional metric Lie algebra,
then any strict CKY $2$-form, such that its co-differential lies in the dual of the center, is a $*$-KY $2$-form.
\end{prop}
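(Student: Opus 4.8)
The plan is to use Theorem \ref{teorema dotti-andrada} as the structural backbone: a strict CKY tensor $T$ whose co-differential $\theta$ lies in the dual of the center has its associated vector $\xi$ in the center $\zz$, so $(\ggo,\ip)$ is exactly a central extension $\hh\oplus_\mu\RR\xi$ where $\hh=\xi^\perp$, $S=T|_\hh$ is an invertible parallel (hence KY) skew-symmetric endomorphism of the $4$-dimensional metric Lie algebra $(\hh,\ip)$, and $\mu(x,y)=-2\la S^{-1}x,y\ra$ is a \emph{closed} $2$-form on $\hh$. I want to show $\mathrm{d}\omega=0$ on $\ggo$, where $\omega$ is the $2$-form dual to $T$, i.e. $\omega(u,v)=\la Tu,v\ra$.

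First I would write $\mathrm{d}\omega$ explicitly using the Chevalley--Eilenberg formula: for $u,v,w\in\ggo$,
\[
\mathrm{d}\omega(u,v,w)=-\omega([u,v],w)-\omega([v,w],u)-\omega([w,u],v).
\]
Then I would split into cases according to how many of $u,v,w$ lie in $\hh=\xi^\perp$ versus equal $\xi$. Since $\xi$ is central, $[\xi,\cdot]=0$, so any triple containing $\xi$ contributes only through the single bracket of the two $\hh$-elements; using $T\xi=0$ (Proposition \ref{xi_unique}) one sees $\omega(\cdot,\xi)=\la T\cdot,\xi\ra=-\la\cdot,T\xi\ra=0$, which kills those terms. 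Hence $\mathrm{d}\omega(u,v,w)=0$ automatically whenever $\xi\in\{u,v,w\}$, and it remains to check the case $u,v,w\in\hh$.

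For $x,y,z\in\hh$, by \eqref{extensioncon mu} we have $[x,y]_\mu=[x,y]_\hh+\mu(x,y)\xi$, and again the $\xi$-part is annihilated by $\omega$. So
\[
\mathrm{d}\omega(x,y,z)=-\omega([x,y]_\hh,z)-\omega([y,z]_\hh,x)-\omega([z,x]_\hh,y)
=\mathrm{d}\omega_\hh(x,y,z),
\]
where $\omega_\hh(x,y)=\la Sx,y\ra$ is the $2$-form on $\hh$ dual to $S$. Now the key point: $\omega_\hh$ and $\mu$ are related by $S$ being (up to the factor $-2$) inverse to the form $\mu$, and the closedness of $\mu$ transfers to closedness of $\omega_\hh$. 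Concretely, since $S$ is \emph{parallel} for the Levi-Civita connection of $(\hh,\ip)$, the form $\omega_\hh$ is parallel, hence in particular closed; alternatively, $\mathrm{d}\mu=0$ together with $\mu=-2\la S^{-1}\cdot,\cdot\ra$ and $S$ parallel forces $\mathrm{d}\omega_\hh=0$ by a direct computation using $\nabla S=0$. Either way $\mathrm{d}\omega_\hh=0$, so $\mathrm{d}\omega=0$ on $\ggo$, and since $\omega$ is strict CKY it is then a $*$-KY $2$-form.

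I expect the main (and only real) obstacle to be the last step: verifying that the invertible parallel skew-symmetric $S$ yields a \emph{closed} dual $2$-form $\omega_\hh$ on $\hh$. The cleanest route is to invoke parallelism directly: a parallel $2$-form on any Riemannian manifold is closed (it is $\nabla$-parallel, and $\mathrm{d}$ is the antisymmetrization of $\nabla$), and by \cite[Lemma 3.7]{ABM} every KY tensor on a $4$-dimensional metric Lie algebra is parallel, so $S$ is parallel and $\omega_\hh$ is parallel, hence closed. This avoids any case analysis on the eight families of Table \ref{tabla1} and makes the argument uniform.
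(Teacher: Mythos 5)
Your proposal is correct and follows essentially the same route as the paper's proof: realize $(\ggo,\ip)$ as the central extension of Theorem \ref{teorema dotti-andrada}, use $T\xi=0$ and centrality of $\xi$ to kill all terms involving $\xi$ and to reduce $\mathrm{d}\omega$ on $\hh$ to $\mathrm{d}^{\hh}\omega_\hh$, and then conclude from the parallelism of $S=T|_\hh$ that $\omega_\hh$ is closed (the paper cites \cite[Corollary 2.2]{AD} for this last step, which is the same "parallel implies closed" fact you invoke). No gaps.
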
 

\begin{proof}
		Let $\omega$ be a strict CKY $2$-form with associated tensor $T$ and central vector $\xi$. 
		It follows from Theorem \ref{teorema dotti-andrada} and the discussion there, that $(\ggo,\ip)$ is as central extension of a $4$-dimensional Lie algebra equipped with a parallel skew-symmetric tensor. 
		Moreover, using the fact that $T\xi=0$ we obtain $\mathrm{d}\omega(x,y,z)=\mathrm{d}^\mathfrak h\omega(x,y,z)$ for any $x,y,z\in \mathfrak h$, where $\mathrm{d}^\mathfrak h$ denotes the derivative with respect to $(\mathfrak{h},[\,,\,]_{\mathfrak{h}},\langle \cdot,\cdot \rangle|_{\mathfrak{h}\times\mathfrak{h}})$. 
		
		Since $T|_\mathfrak{h}$ is a parallel tensor on  $(\mathfrak{h},\langle \cdot,\cdot \rangle|_{\mathfrak{h}\times\mathfrak{h}})$, we have that $\mathrm{d}^\mathfrak h\omega=0$ (see \cite[Corollary 2.2]{AD}).
		In addition, $\mathrm{d}\omega(x,y,\xi)=-\omega([x,y],\xi)=\langle [x,y],T\xi\rangle=0$. Therefore, $\omega$ is a closed, that is, $\omega$ is a $*$-KY $2$-form. 
\end{proof}

Let $(\ggo, \ip)$ be a $5$-dimensional metric Lie algebra admitting a strict CKY  $2$-form such that its co-differential lies in the dual of the center, or equivalently a metric Lie algebra classified in Theorem \ref{classificacion_1}; we notice that there is only one strict CKY  $2$-form, up to scaling. Indeed, any associated tensor in the third column of Table \ref{tabla1} is completely determined by the corresponding metric Lie algebra, and any non-zero multiple of $T$ is again a strict CKY tensor according to Remark \ref{rescaling}. 

In the next result we show that each metric Lie algebra in Table \ref{tabla1} does not admit any other CKY $2$-forms, including KY $2$-forms and strict CKY $2$-forms whose co-differential is not necessary in the dual of the center.

\begin{thm}\label{CKYspace_extensiones}
	Let $(\ggo,\ip)$ be a $5$-dimensional metric Lie algebra admitting a strict CKY \linebreak $2$-form, such that its co-differential lies in the dual of the center, then 
	$\mathcal{KY}^2(\ggo,\ip)=0$ and $\mathcal{*KY}^2(\ggo,\ip)=\mathcal{CKY}^2(\ggo,\ip)$ is $1$-dimensional.  In particular, $(\ggo,\ip)$ does not admit non-zero parallel tensors.
\end{thm}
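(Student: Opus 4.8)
The plan is to show that any CKY $2$-form $\eta$ on one of the metric Lie algebras $(\ggo,\ip)$ of Theorem \ref{classificacion_1} must be a scalar multiple of the distinguished $*$-KY form $\omega$ listed in Table \ref{tabla1}, and that no non-trivial KY $2$-form exists. First I would recall that since $(\ggo,\ip)$ admits a \emph{strict} CKY tensor, $\dim\ggo=5$ is odd and (by Lemma \ref{centro} applied in reverse, or directly by Theorem \ref{teorema dotti-andrada}) the center $\zz$ is one-dimensional, spanned by the associated vector $\xi$ of that strict tensor. Now take an \emph{arbitrary} CKY $2$-form $\eta$ with associated $1$-form $\vartheta_\eta$ and dual vector $\xi_\eta$. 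There are two cases: $\xi_\eta=0$ (so $\eta$ is a KY $2$-form) and $\xi_\eta\neq 0$ (so $\eta$ is a genuinely strict CKY tensor, whose co-differential \emph{a priori} need not lie in $\zz^*$).

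For the strict case, the first key step is to show $\xi_\eta\in\zz$. By Theorem \ref{ad1}, $\xi_\eta^\perp$ is $\ad_{\xi_\eta}$-stable and $\ggo$ being odd-dimensional with $\dim\zz=1$ forces strong constraints; I would argue that $\zz$, being $1$-dimensional and characteristic, must be contained in $\xi_\eta^\perp$ (if $\zz\not\subset\xi_\eta^\perp$ then $\ggo=\zz\oplus\xi_\eta^\perp$ with $\zz$ spanned by a vector having nonzero $\vartheta_\eta$-value, and Lemma \ref{xiperpgg} says $\langle\xi_\eta,\ggo'\rangle\neq0$ while $\zz\perp\ggo'$ in the relevant cases — this needs the explicit brackets). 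Actually the cleanest route is: for each of the eight families in Table \ref{tabla1}, $\zz$ is explicitly $\RR\xi$ and one computes directly that $\ad_x$ for $x\in\xi^\perp$ is nonzero, hence $\xi_\eta$ — which satisfies $\nabla_{\xi_\eta}\xi_\eta=0$ and the constraints of Theorem \ref{ad1} — must be proportional to $\xi$; then Theorem \ref{teorema dotti-andrada} (the converse direction) identifies $\eta|_{\xi^\perp}$ with an invertible KY tensor $S$ on the $4$-dimensional Lie algebra $\mathfrak h=\xi^\perp$, and \cite[Lemma 3.7]{ABM} says $S$ is \emph{parallel}. By the uniqueness/classification in \cite{Herrera}, the parallel skew-symmetric endomorphisms on each fixed $(\mathfrak h,\ip)$ form the explicit family appearing in Table \ref{tabla1}, and the closedness constraint on $\mu(x,y)=-2\langle S^{-1}x,y\rangle$ together with the bracket relations $[x,y]_\mu=[x,y]+\mu(x,y)\xi$ being fixed pins down $S$ up to scaling. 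This gives $\eta=c\,\omega$.

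For the KY case $\xi_\eta=0$: here $\eta$ satisfies $(\nabla_x T_\eta)y+(\nabla_y T_\eta)x=0$ for all $x,y$, i.e. $T_\eta$ is a Killing-Yano tensor on the $5$-dimensional $(\ggo,\ip)$. The key observation is that $\zz=\RR\xi$ is $1$-dimensional and $\xi$ is a unit central vector, so $\nabla\xi$ is a skew-symmetric endomorphism commuting with nothing in particular — but one checks from \eqref{koszul} that $\nabla_x\xi = \tfrac12(\ad_x)^*(\xi)$-type expressions; applying the KY equation to triples involving $\xi$ and exploiting $\nabla_\xi\xi$, $\langle\nabla_x\xi,y\rangle$ forces $T_\eta\xi$ to be highly constrained, and then a direct computation in each family (using that the brackets are explicit) shows $T_\eta=0$. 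Alternatively, invoke \cite[Corollary 2.2]{AD}: a KY tensor on these Lie algebras would force a parallel skew-symmetric endomorphism on $\ggo$, and one checks case by case that $(\ggo,\ip)$ admits no non-zero parallel tensor (the curvature, or simply the fact that $\ad_\xi=0$ while $\nabla_\xi$ acts nontrivially on $\xi^\perp$ via the $\mu$-term $[x,y]_\mu\ni\mu(x,y)\xi$, obstructs parallelism). This simultaneously yields the final sentence of the statement, that $(\ggo,\ip)$ has no non-zero parallel tensors.

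The main obstacle I anticipate is the strict case, specifically ruling out a strict CKY $\eta$ whose co-differential does \emph{not} lie in $\zz^*$, i.e. showing $\xi_\eta\in\zz$: a priori $\xi_\eta$ could be any vector with $\langle\xi_\eta,\ggo'\rangle\neq0$ satisfying the Theorem \ref{ad1} constraints, and $T_\eta$ need not preserve $\zz$. The honest way through is probably not a slick invariant argument but a finite case check: for each of the eight metric Lie algebras, parametrize a general skew-symmetric $T_\eta$ and a general $\xi_\eta\in\ggo$, impose $T_\eta\xi_\eta=0$ (Proposition \ref{xi_unique}), $\ad_{\xi_\eta}(\xi_\eta^\perp)\subseteq\xi_\eta^\perp$ and the full CKY system \eqref{ckyequationliealgebra}, and verify the only solutions are scalar multiples of the $\omega$ in Table \ref{tabla1}. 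This is routine linear algebra per family but tedious; the conceptual shortcut — reducing to \cite[Lemma 3.7]{ABM} and \cite{Herrera} once $\xi_\eta\in\zz$ is established — handles everything cleanly, so the real work is just the ``$\xi_\eta\in\zz$'' reduction, which I would present as a short lemma proved by the dimension/characteristic-subalgebra argument sketched above, falling back on explicit computation only where the general argument is awkward.
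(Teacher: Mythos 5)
Your workable core is the same as the paper's proof: the authors also reduce everything to a finite case check, fixing each of the eight metric Lie algebras of Table \ref{tabla1}, imposing the full CKY system \eqref{ckyequationliealgebra} on an arbitrary skew-symmetric tensor (with arbitrary associated vector $\xi_\eta$, not assumed central), and verifying by a Sage-assisted computation that the solution space is exactly $\RR\omega$; closedness then comes from Proposition \ref{closed}, and the absence of parallel tensors from $\mathcal{KY}^2=0$. So your ``honest way through'' is precisely their proof, and you correctly isolate the genuinely new content of the theorem beyond Theorem \ref{classificacion_1}, namely ruling out KY tensors and strict CKY tensors with $\xi_\eta\notin\zz$.

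Two of your proposed shortcuts, however, would not survive scrutiny and you should not lean on them. First, the reduction ``$\xi_\eta\in\zz$'' via the characteristic-subalgebra argument is stated backwards: since $\dim\zz=1$, having $\xi_\eta\in\zz$ means $\zz=\RR\xi_\eta$, so $\zz\not\subset\xi_\eta^{\perp}$; proving $\zz\subset\xi_\eta^{\perp}$ (which is what your parenthetical aims at) would establish the opposite of what you need, and Lemma \ref{xiperpgg} together with Theorem \ref{ad1} does not by itself force $\xi_\eta$ to be central --- this is exactly why the paper resorts to solving the full linear system per family. Second, the alternative for the KY case misquotes \cite[Corollary 2.2]{AD}: that result says a parallel skew-symmetric endomorphism is KY with closed associated form, not that a KY tensor is parallel; the implication KY $\Rightarrow$ parallel is a $4$-dimensional phenomenon (\cite[Lemma 3.7]{ABM}) and fails in dimension $5$, so the KY case also has to be killed by direct computation. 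With those two caveats removed and the case-by-case verification carried out, your argument coincides with the paper's.
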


\begin{proof}
	It follows from Section 3 that any $5$-dimensional metric Lie algebra admitting a CKY $2$-form such that its co-differential lies in the dual of the center is listed in Table \ref{tabla1}. Thus, let us fix one of the Lie algebras $(\ggo,\ip)$ in Table \ref{tabla1}, and we proceed case by case. 
	By a direct calculation assisted by the software system Sage \cite{SAGE} we see that an arbitrary $2$-form on $(\ggo,\ip)$ is CKY, if and only if, its associated tensor is the one listed in Table \ref{tabla1}, up to scaling.  
	
	In particular, $(\ggo,\ip)$ does not admit neither non-trivial KY $2$-forms nor strict CKY $2$-forms such that their co-differential is not in the dual of the center (we are happy to share the Sage's implementation with anybody who might be interested). Therefore, $\mathcal{KY}^2(\ggo,\ip)=0$, $\mathcal{CKY}^2(\ggo,\ip)$ is $1$-dimensional, and using Proposition \ref{closed} it follows that $\mathcal{*KY}^2(\ggo,\ip)=\mathcal{CKY}^2(\ggo,\ip)$.
\end{proof}
	
\begin{rem}
	It follows from Theorem \ref{CKYspace_extensiones} that any $5$-dimensional metric Lie algebra $(\ggo,\ip)$
	admitting a strict CKY $2$-form, such that its co-differential lies in the dual of the center,  does not admit any other  strict  CKY $2$-form $\omega$ such that the associated vector $\xi$ is not in the center. 
	However, it is still an open problem to determine the existence of CKY $2$-forms on an arbitrary metric Lie algebra with $1$-dimensional center, which is not necessarily generated by $\xi$.
	In particular, it would be interesting to study other metrics on those  $5$-dimensional Lie algebras in Table \ref{NotacionNueva} and determine if they admit a CKY $2$-form with $\xi\notin \zz$. In that case, the metric will not be isometric to the ones described in Table \ref{NotacionNueva}.
\end{rem}


\

We consider now a $5$-dimensional metric Lie algebra with $\dim\zz>1$ admitting a strict CKY tensor, that is,
a metric Lie algebra  classified in Theorem \ref{metricLA}, see Table \ref{TensoresBaseCanonica}. Since they are isometrically isomorphic to $(\ggo_{r,s},\ip)$ for some $r,s>0$ we can work with the latter for simplicity.
Given $(\ggo_{r,s},\ip)$, a strict CKY tensor $T$ is completely determined by $r,s$ according to  \eqref{Tisometrico}. 
Moreover, it follows from Remark \ref{rescaling} that any multiple of that $T$ is a strict CKY tensor as well. 
Thus, we can conclude that there is only one, up to scaling, strict CKY 2-form for each $(\ggo_{r,s},\ip)$, in particular, $1\leq \dim \,\mathcal{CKY}(\ggo_{r,s},\ip) \leq$ ${7}\choose{3}$, according to \cite[Theorem 5.2]{Semmelmann}. We show in the following proposition that  these strict CKY 2-forms are never $*$-KY (compare with Proposition \ref{closed}).

\begin{prop}\label{never_closed}
	Let $(\ggo,\ip)$ be a $5$-dimensional metric Lie algebra with $\dim\zz>1$, then any strict CKY  $2$-form $\omega$ is never closed.
\end{prop}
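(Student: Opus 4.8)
The plan is to reduce to the explicit model provided by the preceding classification and then rule out closedness by a single evaluation of $\mathrm{d}\omega$. First, since $\dim\zz>1$ and $(\ggo,\ip)$ admits a strict CKY $2$-form, Lemma~\ref{centro3} discards the possibility $\dim\zz=3$, so $\dim\zz=2$; then by Lemma~\ref{LBconditions} we may assume $(\ggo,\ip)=(\ggo_{r,s},\ip)$ with Lie brackets as in \eqref{super-reduced-Lie_brackets} for some $r,s>0$, and that any strict CKY $2$-form is a nonzero scalar multiple of the $2$-form $\omega$ in \eqref{Tisometrico}. Since $\mathrm{d}$ is linear, it suffices to show that this particular $\omega$ is not closed.

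I would then compute $\mathrm{d}\omega$ on the triple $(\xi,z_1,y)$ using the left invariant Chevalley--Eilenberg formula $\mathrm{d}\omega(u,v,w)=-\omega([u,v],w)+\omega([u,w],v)-\omega([v,w],u)$. Because $z_1$ is central, $[\xi,z_1]=[z_1,y]=0$, so only the middle term survives, and with $[\xi,y]=rz_2-a_4x$ we obtain $\mathrm{d}\omega(\xi,z_1,y)=\omega(rz_2-a_4x,z_1)=r\,\omega(z_2,z_1)-a_4\,\omega(x,z_1)$. Reading off the coefficients of $z^1\wedge z^2$ and $z^1\wedge x^*$ in \eqref{Tisometrico} gives $\omega(z_2,z_1)=-\tfrac{2(s^2+2r^2)}{r^2s}$ and $\omega(x,z_1)=-\tfrac{2}{r}$, so, substituting $a_4=\tfrac{s^2-r^2}{s}$, $$\mathrm{d}\omega(\xi,z_1,y)=-\frac{2(s^2+2r^2)}{rs}+\frac{2(s^2-r^2)}{rs}=-\frac{6r}{s}\neq 0.$$ Hence $\omega$ — and therefore every nonzero scalar multiple of it — fails to be closed, which is exactly the assertion.

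I do not anticipate a genuine obstacle here: the whole argument rests on the classification of Section~\ref{LieAlgebras}. The only points demanding attention are (i) invoking Lemma~\ref{centro3} so that the case $\dim\zz=3$, in which the statement is vacuous, is accounted for, and (ii) recalling that the strict CKY $2$-form on $(\ggo_{r,s},\ip)$ is unique up to scaling, so that the whole proposition reduces to the single evaluation above. A parallel computation on the triple $(\xi,z_2,x)$ gives $\mathrm{d}\omega(\xi,z_2,x)=\tfrac{6r}{s}$, which together with the previous value pins down $\mathrm{d}\omega$ explicitly, but this is not needed for the proof.
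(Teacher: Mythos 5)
Your proposal is correct and follows essentially the same route as the paper: reduce via the classification (Lemma \ref{LBconditions}, together with the uniqueness of the strict CKY $2$-form up to scaling) to the model $(\ggo_{r,s},\ip)$ with $\omega$ as in \eqref{Tisometrico}, and then exhibit a single triple on which $\mathrm{d}\omega$ evaluates to $\pm\frac{6r}{s}\neq 0$ — the paper uses $(x,z_2,\xi)$ where you use $(\xi,z_1,y)$, an immaterial difference. Your explicit appeal to Lemma \ref{centro3} to dispose of the vacuous case $\dim\zz=3$ is a small point of extra care that the paper leaves implicit.
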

\begin{proof}
	By Lemma \ref{LBconditions} and Lemma \ref{isometric}, a metric Lie algebra $(\ggo,\ip)$ admitting a strict CKY  $2$-form $\omega$ is isomorphically isometric to $(\ggo_{r,s},\ip)$ for some $r,s>0$ with Lie brackets as \eqref{super-reduced-Lie_brackets}, and $\omega$ is given by \eqref{Tisometrico}. 
	Then, by a direct computation, we have that
	\begin{align*}
	\mathrm{d}\omega(x,z_2,\xi)=\la [\xi,x],Tz_2\ra=-\frac{{6}r}{s},
	\end{align*}
 and therefore $\omega$ is never closed.
\end{proof}

Now, in order to determine the vector space $\mathcal{CKY}^2(\ggo_{r,s},\ip)$, we investigate KY $2$-forms on the metric Lie algebras $(\ggo_{r,s},\ip)$, or equivalently KY tensors on  $(\ggo_{r,s},\ip)$. Note that the KY condition on a metric Lie algebra is  given  by  the CKY condition \eqref{ckyequationliealgebra} with $\theta=0$.

\begin{prop}\label{kyT0}
	Let  $(\ggo_{r,s},\ip)$ be the  metric Lie algebra with Lie brackets given by \eqref{super-reduced-Lie_brackets}.
	If $T$ is a KY tensor on $(\ggo_{r,s},\ip)$, then $T=0$. In particular, $(\ggo_{r,s},\ip)$ does not admit non-zero parallel tensors.
\end{prop}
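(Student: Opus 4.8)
The plan is to write an arbitrary skew-symmetric endomorphism $T$ of $\ggo_{r,s}$ in the orthonormal basis $\{\xi,z_1,z_2,x,y\}$, impose the KY condition (equation \eqref{ckyequationliealgebra} with $\theta=0$), and show the only solution is $T=0$. The key simplification is that the KY condition is exactly \eqref{ckyequationliealgebra} with trivial right-hand side, so all the particular identities extracted in Lemma \ref{lema21} remain valid provided one sets $\theta=0$ throughout. First I would recall that the center $\zz=\Span\{z_1,z_2\}$ has dimension $2$, so the hypotheses of Lemma \ref{lema21} and Proposition \ref{enough} apply; but now with $\theta=0$, condition \eqref{lema21-1} becomes $\la[x,Tz_1],z_2\ra=\la[x,Tz_2],z_1\ra=0$ for all $x\in\zz^\perp$, and similarly \eqref{lema21simple} becomes $\la[Tz,x],z\ra=0$ for all $z\in\zz$, $x\in\zz^\perp$.

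Next I would exploit the concrete brackets \eqref{super-reduced-Lie_brackets}: $[\xi,x]=rz_1+a_4y$, $[\xi,y]=rz_2-a_4x$, $[x,y]=s\xi$ with $r,s>0$. Write $Tz_1,Tz_2$ as linear combinations of the basis vectors; computing the brackets $[x,Tz_i]$, $[y,Tz_i]$, $[\xi,Tz_i]$ is straightforward since only the three brackets above (and their antisymmetric partners) are nonzero. Feeding these into the vanishing versions of \eqref{lema21-1}, \eqref{lema22simple} and \eqref{lema21simple} should force the $\zz$-components and then the remaining components of $Tz_1, Tz_2$ to vanish, i.e. $T\zz=0$; intuitively, with $\theta=0$ there is no longer any "source term'' compelling $T$ to mix $\zz$ with $\zz^\perp$, in contrast with Lemma \ref{zperpgg}. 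Once $T\zz=0$ is established, the operator $T$ preserves $\zz^\perp$ and I would restrict attention to $T|_{\zz^\perp}$, using conditions \eqref{xyz1}, \eqref{xyz2} for $z\in\zz$, $x,y\in\zz^\perp$ (now homogeneous) together with the pure-$\zz^\perp$ KY condition \eqref{ky_equation} for all triples in $\zz^\perp$ (not just those orthogonal to $\xi$, since $\theta=0$ removes the need to single out $\xi$). These are a finite linear system in the at most $\binom{3}{2}=3$ remaining parameters of $T|_{\zz^\perp}$, plus the constraints from the off-diagonal blocks; solving it should yield $T|_{\zz^\perp}=0$ as well.

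The main obstacle I anticipate is purely bookkeeping: keeping track of which instances of \eqref{ckyequationliealgebra} survive with $\theta=0$ and assembling them into a system that is manifestly overdetermined, rather than missing a case and leaving a spurious one-parameter family. A clean way to avoid this is to invoke Proposition \ref{enough} in its homogeneous form — the listed conditions are necessary and sufficient for $T$ to be KY — so it suffices to show those finitely many linear equations have only the trivial solution; this can be verified directly or, as in the proof of Theorem \ref{CKYspace_extensiones}, with a short symbolic computation. Finally, the last sentence follows immediately: a parallel skew-symmetric tensor is KY (as recalled after Theorem \ref{teorema dotti-andrada}), so $\mathcal{KY}^2(\ggo_{r,s},\ip)=0$ forces the space of parallel tensors to be zero.
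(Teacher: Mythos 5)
Your proposal is correct and follows essentially the same route as the paper: write $T$ in the orthonormal basis $\{\xi,z_1,z_2,x,y\}$, impose the homogeneous ($\theta=0$) versions of the identities from Lemma \ref{lema21} together with the explicit brackets \eqref{super-reduced-Lie_brackets}, and solve the resulting overdetermined linear system to get $T=0$. One small bookkeeping caveat: the conditions involving two central vectors only force $T\zz\subseteq\zz$ (a residual $z^1\wedge z^2$ component survives), and that last parameter is killed by an instance of the mixed condition with one central and two non-central vectors (the paper uses the triple $(\xi,z_2,x)$), which is indeed among the equations \eqref{xyz1}--\eqref{xyz2} you list in your second stage.
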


\begin{proof}
	Let $T$ be a KY tensor on $(\ggo_{r,s},\ip)$ with associated $2$-form 
	given by $$\omega=\theta\wedge(az^1+bz^2+cx^*+dy^*)+ z^1\wedge(ez^2+fx^*+gy^*)+z^2\wedge(hx^*+iy^*)+ x^*\wedge y^*,$$
	where $\{\theta,z^1,z^2,x^*,y^*\}$ is the metric dual
	of the orthonormal basis $\{ \xi,z_1,z_2,x,y\}$.
	Then $\omega$ must be co-closed, which is equivalent to $c=j=d=0$.
	On the other hand, from  \eqref{lema21-1} (with $\theta=0$) we have that $\la [Tz_1,u], z_2\ra=\la [Tz_2,u], z_1\ra=0$ and $\la [Tz_i,u], z_i\ra=0$ for $i=1,2$ and any $u\in\zz^\perp$.
	Then, using these equations with $u$ $w\in \{\xi,x,y\}$, we get that $a=b=f=g=h=i=0$.
	Finally, applying \eqref{ckyequationliealgebra} (with $\theta=0$) for $\{\xi,z_2,x\}$ we obtain $e=0$, and therefore $T=0$.
\end{proof}

\begin{rem} 
	An interesting question would be to try to find another metric on the Lie algebras $\ggo_{r,s}$ such that it admits a non-zero KY tensor. Of course, this metric Lie algebra will not admit any strict CKY tensor.
\end{rem}

As a direct consequence of Proposition \ref{kyT0} and Proposition \ref{never_closed} we have:

\begin{thm}\label{CKYspace}
	Let  $(\ggo_{r,s},\ip)$ be a metric Lie algebra with Lie bracket as in  \eqref{super-reduced-Lie_brackets}. Then, $\mathcal{KY}^2(\ggo_{r,s}\ip)=\mathcal{*KY}^2(\ggo_{r,s},\ip)=0$ and the space $\mathcal{CKY}^2(\ggo_{r,s},\ip)$ has dimension one.
\end{thm}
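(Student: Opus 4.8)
The plan is to combine the two preceding propositions with the observation, already established in Remark after Lemma \ref{isometric}, that each metric Lie algebra $(\ggo_{r,s},\ip)$ admits exactly one strict CKY $2$-form up to scaling. Concretely, the vector space $\mathcal{CKY}^2(\ggo_{r,s},\ip)$ decomposes (as subspaces of a common vector space) into the strict CKY $2$-forms together with $0$, and the non-strict CKY $2$-forms are precisely the KY $2$-forms, i.e.\ $\mathcal{KY}^2(\ggo_{r,s},\ip)$.

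First I would recall that a CKY $2$-form $\omega$ with associated $1$-form $\theta$ is a KY $2$-form exactly when $\theta=0$; thus any $\omega\in\mathcal{CKY}^2(\ggo_{r,s},\ip)$ that is not strict lies in $\mathcal{KY}^2(\ggo_{r,s},\ip)$, and Proposition \ref{kyT0} forces $\omega=0$. So every non-zero element of $\mathcal{CKY}^2(\ggo_{r,s},\ip)$ is strict. Next, by Lemma \ref{LBconditions} together with Proposition \ref{enough}, there is at least one strict CKY $2$-form, namely the one given by \eqref{Tisometrico}; hence $\mathcal{CKY}^2(\ggo_{r,s},\ip)\neq 0$. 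It remains to see that it is at most one dimensional: if $\omega_1,\omega_2$ are two non-zero (hence strict) CKY $2$-forms, then for a suitable $c\in\RR^*$ the combination $\omega_1-c\omega_2$ has vanishing associated $1$-form — one can choose $c$ so that the $\xi$-components of the associated vectors cancel, using Remark \ref{rescaling} and the linearity of $\omega\mapsto\theta$ — and therefore $\omega_1-c\omega_2\in\mathcal{KY}^2(\ggo_{r,s},\ip)=0$ by Proposition \ref{kyT0}. This shows $\dim\mathcal{CKY}^2(\ggo_{r,s},\ip)=1$.

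Finally, for the $*$-KY statement: by Proposition \ref{never_closed} no strict CKY $2$-form on $(\ggo_{r,s},\ip)$ is closed, so $\mathcal{*KY}^2(\ggo_{r,s},\ip)$ contains no strict CKY $2$-form; but we have just seen that every non-zero CKY $2$-form is strict, so $\mathcal{*KY}^2(\ggo_{r,s},\ip)=0$. Likewise Proposition \ref{kyT0} gives $\mathcal{KY}^2(\ggo_{r,s},\ip)=0$ directly.

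The only mild subtlety — not really an obstacle — is making rigorous the reduction "two strict CKY $2$-forms differ by a KY $2$-form up to scaling": this requires knowing that the map sending a CKY $2$-form to its associated vector $\xi\in\ggo$ is linear (clear from the uniqueness of $\theta$ and the formula $\theta(X)=-\tfrac{1}{n-1}\mathrm d^*\omega(X)$) and that one can arrange the cancellation; once that is in place, Proposition \ref{kyT0} does all the work. Everything else is a direct citation of Proposition \ref{kyT0}, Proposition \ref{never_closed}, and Lemma \ref{LBconditions}.
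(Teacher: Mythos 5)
Your proof is correct and follows essentially the paper's route: the paper presents Theorem \ref{CKYspace} as a direct consequence of Proposition \ref{kyT0} and Proposition \ref{never_closed}, combined with the previously recorded fact that each $(\ggo_{r,s},\ip)$ carries exactly one strict CKY $2$-form up to scaling, and your argument fills in precisely those steps. One caveat: the cancellation argument you offer for $\dim\mathcal{CKY}^2(\ggo_{r,s},\ip)\leq 1$ is not self-contained as written. Choosing $c$ with $\theta_1-c\,\theta_2=0$ is possible only if the associated vectors $\xi_1,\xi_2$ of the two strict forms are proportional, and linearity of $\omega\mapsto\theta$ alone does not give this --- if $\xi_1,\xi_2$ were independent, every combination $\omega_1-c\,\omega_2$ would remain strict and the argument would stall. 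The proportionality does hold here (for instance, the proof of Lemma \ref{LBconditions} shows that the associated vector of any strict CKY tensor lies in $\ggo'\cap\zz^{\perp}$, which is one-dimensional for these algebras; alternatively it is exactly the uniqueness-up-to-scaling recorded after Lemma \ref{LBconditions}), but since you already invoke that uniqueness at the outset, the cancellation detour is redundant rather than load-bearing. With that understood, the remaining assertions --- $\mathcal{KY}^2=0$ from Proposition \ref{kyT0}, and $\mathcal{*KY}^2=0$ because every nonzero CKY $2$-form is strict and hence non-closed by Proposition \ref{never_closed} --- are exactly the paper's reasoning.
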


\begin{rem}
	As a consequence of Theorem \ref{CKYspace}, any CKY $2$-forms on $(\ggo_{r,s},\ip)$	cannot be written as a linear combination of Killing forms and Hodge dual of Killing forms. Compare with \cite[Theorem 4.1]{dBM21} where CKY $2$-forms on $2$-step nilpotent Lie algebras are studied, and it is shown that the space of CKY $2$-forms coincides with  the space of KY $2$-forms or with  the space of $*$-KY $2$-forms.
\end{rem}

It is known that the  Hodge-star operator interchanges $*$-KY forms and KY forms (see \cite[Theorem 2]{Stepanov}). 
Therefore, the next result follows from Theorem \ref{CKYspace}.

\begin{cor}\label{3forms}
 Let  $(\ggo_{r,s},\ip)$ be a metric Lie algebra with Lie bracket by \eqref{super-reduced-Lie_brackets}.  
Then, \linebreak $\mathcal{KY}^3(\ggo_{r,s}\ip)=\mathcal{*KY}^3(\ggo_{r,s},\ip)=0$ and the space $\mathcal{CKY}^3(\ggo_{r,s},\ip)$ has dimension one.
\end{cor}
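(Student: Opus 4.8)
The plan is to deduce Corollary~\ref{3forms} from Theorem~\ref{CKYspace} by applying the Hodge-star duality between $p$-forms and $(n-p)$-forms on $(\ggo_{r,s},\ip)$, which here means duality between $2$-forms and $3$-forms since $n=5$. First I would invoke the general fact recalled just before the statement: the Hodge-star operator $*\colon \Ld^p\ggo^* \to \Ld^{n-p}\ggo^*$ is a linear isomorphism that interchanges $*$-KY forms and KY forms, and (as noted in the introduction via \cite{Semmelmann}) interchanges closed and co-closed CKY forms; in particular it restricts to a linear isomorphism $\mathcal{CKY}^2(\ggo_{r,s},\ip)\to\mathcal{CKY}^3(\ggo_{r,s},\ip)$ carrying $\mathcal{KY}^2$ onto $\mathcal{*KY}^3$ and $\mathcal{*KY}^2$ onto $\mathcal{KY}^3$.

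Given that, the argument is immediate. By Theorem~\ref{CKYspace} we have $\dim\mathcal{CKY}^2(\ggo_{r,s},\ip)=1$, so the isomorphism $*$ forces $\dim\mathcal{CKY}^3(\ggo_{r,s},\ip)=1$ as well. Again by Theorem~\ref{CKYspace}, $\mathcal{KY}^2(\ggo_{r,s},\ip)=\mathcal{*KY}^2(\ggo_{r,s},\ip)=0$; applying $*$ and the interchange property yields $\mathcal{*KY}^3(\ggo_{r,s},\ip)=*\big(\mathcal{KY}^2(\ggo_{r,s},\ip)\big)=0$ and $\mathcal{KY}^3(\ggo_{r,s},\ip)=*\big(\mathcal{*KY}^2(\ggo_{r,s},\ip)\big)=0$. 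This is exactly the asserted conclusion.

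I would also remark, for completeness, that the single generator of $\mathcal{CKY}^3(\ggo_{r,s},\ip)$ is the Hodge dual of the CKY $2$-form \eqref{Tisometrico}; since that $2$-form is neither closed nor co-closed (Proposition~\ref{never_closed} together with the fact that it is strict, hence not KY), its Hodge dual is a strict CKY $3$-form which is neither closed nor co-closed either.

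There is essentially no obstacle here: the only thing to be careful about is that the Hodge-star correspondence is being used at the level of left-invariant forms on the Lie group, equivalently forms on the Lie algebra $\ggo_{r,s}$ equipped with $\ip$ and a fixed orientation, and that the cited result \cite[Theorem 2]{Stepanov} (and the analogous remarks in \cite{Semmelmann}) indeed applies in this finite-dimensional algebraic setting — which it does, since the CKY equation \eqref{ckyequationliealgebra} and its KY and $*$-KY specializations are purely algebraic conditions on $(\ggo_{r,s},\ip)$ and the Hodge star is an isometry of $\Ld^\bullet\ggo^*$ commuting appropriately with $\mathrm{d}$ and $\mathrm{d}^*$. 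Thus the proof is a two-line deduction from Theorem~\ref{CKYspace}.
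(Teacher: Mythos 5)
Your proposal is correct and matches the paper's own argument: the paper likewise deduces the corollary from Theorem \ref{CKYspace} by noting that the Hodge-star operator interchanges KY and $*$-KY forms (citing \cite[Theorem 2]{Stepanov}) and hence carries the $2$-form spaces isomorphically onto the corresponding $3$-form spaces. Your additional remarks on the left-invariant setting and on the explicit generator are consistent with, and slightly more detailed than, what the paper records.
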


\begin{rem}
	In \cite{dBM19}, the authors give a characterization of KY $3$-forms on $2$-step nilpotent nilmanifolds. 
\end{rem}

	\begin{rem}
		Each metric Lie algebra in Table \ref{TensoresBaseCanonica} is isometrically isomorphic to one of the metric Lie algebras $(\ggo_{r,s},\ip)$ described above. Then it follows from Theorem \ref{CKYspace} and Corollary \ref{3forms} that the vector spaces $\mathcal{CKY}^p(L_{5,9},\ip_r)$,  $\mathcal{CKY}^p(\RR^2 \times\mathfrak{su}(2),\ip_{r,s})$ and $\mathcal{CKY}^p(\RR^2 \times\mathfrak{sl}(2,\RR),\ip_{r,s})$ have dimension one, for $p=2$ and $3$. In the same way, the vector spaces $\mathcal{KY}^p(L_{5,9},\ip_r)$,  $\mathcal{KY}^p(\RR^2 \times\mathfrak{su}(2),\ip_{r,s})$, $\mathcal{KY}^p(\RR^2 \times\mathfrak{sl}(2,\RR),\ip_{r,s})$, $\mathcal{*KY}^p(L_{5,9},\ip_r)$,  $\mathcal{*KY}^p(\RR^2 \times\mathfrak{su}(2),\ip_{r,s})$ and $\mathcal{*KY}^p(\RR^2 \times\mathfrak{sl}(2,\RR),\ip_{r,s})$ are trivial for $p=2$ and $3$.
\end{rem}

\medskip

\begin{rem}
	It can be shown that the Lie algebra of the holonomy group of all examples from Sections $3$ and $5$ is generic, i.e. $\mathfrak{hol}(G,g) \cong \mathfrak{so}(5)$ and $\mathfrak{hol}(G_{r,s},g) \cong \mathfrak{so}(5)$, where $(G,g)$ is the simply connected Lie group  with  left invariant metric  associated to any metric Lie algebra in Table \ref{tabla1} and $(G_{r,s},g)$ is the simply connected Lie group  with  left invariant metric  associated to any metric Lie algebra  $(\ggo_{r,s}, \ip)$. Then $(G,g)$ and $(G_{r,s},g)$ are irreducible as Riemannian manifolds.
\end{rem}

\

\end{document}